\newcommand{\ignore}[1]{}
\newtheorem{theorem}{Theorem}
\newtheorem{corollary}[theorem]{Corollary}
\newtheorem*{theorem_nonum}{Theorem}
\newtheorem{lemma}[theorem]{Lemma}
\newtheorem{proposition}[theorem]{Proposition}
\newtheorem{definition}[theorem]{Definition}
\newtheorem{remark}[theorem]{Remark}
\title{Price dynamics on a risk-averse market with asymmetric information}
\author{Bernard De Meyer \thanks{PSE-Universit\'e Paris 1, 106-112 Boulevard de l'H\^opital, 75647 Paris Cedex 13, France, \texttt{demeyer@univ-paris1.fr}}
\and
Ga\"etan Fournier \thanks{IAST - Toulouse 1 Capitole, 21 All\'ee de Brienne, 31000 Toulouse France,  ~ \texttt{fournier.gtn@gmail.com} Support through the ANR Labex IAST is gratefully acknowledged.} 
}
\begin{document}

\maketitle

\begin{abstract}
A market with asymmetric information can be viewed as a repeated exchange game between  the informed sector and the uninformed one. In a market with risk-neutral agents, \citet{de2010} proves that the price process should be a particular kind of Brownian martingale called CMMV. This type of dynamics is due to the strategic use of their private information by the informed agents. In the current paper, we consider the more realistic case where agents on the market are risk-averse. This case is much more complex to analyze as it leads to a non-zero-sum game. Our main result is that the price process is still a CMMV under a martingale equivalent measure. This paper provides thus a theoretical justification for the use of the CMMV class of dynamics in financial analysis. This class contains as a particular case the Black and Scholes dynamics.
\end{abstract}

\noindent
\emph{JEL Classification}: G14, C72, C73, D44
~~\\

\noindent
\emph{Keywords}: Asymmetric information, Price dynamics, Martingales of maximal variation, Repeated games, Martingale equivalent measure, Risk aversion

\section{Introduction}\label{intro}
Information asymmetries are omnipresent in financial markets. We do not mean here insider trading which is illegal but agents on the market are de facto asymmetrically informed: institutionals have clearly a better access to information than private investors. They have access to more information, quicker, and they are better skilled to analyze it. Typically, they have more information on the companies whom shares they are trading and they have entire services analyzing the economic conjuncture. Aside these information about the economic health of the underlying firms, they also have access to an other kind of private information that is relevant to forecast the price evolution: they often serve as intermediary between their clients and the market. When receiving an important order from a client, they clearly receive a private information that will affect the short term price evolution. They will typically use this information in an optimal way to get the best execution price. This is the focus of the literature on optimal execution, as introduced in \cite{almgren2001optimal}. \\

In these situations everybody is aware that informational asymmetries exist and knows who are the informed agents. Informed agents' actions on the market are therefore analyzed by the uninformed agents in order to deduce the informative content behind these actions. As suggested in previous papers (\citet{de2003} and \citet{de2010}), this phenomenon could partially explain the  kind of price dynamics observed on the market.\\

These papers model the market with a single risky asset which is exchanged for counterpart a num\'{e}raire. For simplicity they consider a short period of time  just after a single asymmetric information shock. Because the time period considered is short, it can be assumed that there is no consumption and the aim of the agent is to maximize the expected value of their final portfolio. The informational shock is materialized by a initial private random message $m$ received by the informed player. This message will influence the final price $L$ of the risky asset which can thus be viewed as a deterministic function $L=L(m)$. $L(m)$ contains in fact all the relevant information carried by $m$. Therefore the structure of the initial information asymmetry can be simplified. 
Since $m$ is random, so is $L(m)$. Denoting $\mu$ the probability distribution of $L(m)$, the initial informational shock is now modeled by a initial lottery selecting $L$ with probability $\mu$ once for all. Player 1 is informed of $L$ while Player 2 just knows $\mu$. After this initial information shock, players are exchanging assets during $n$ consecutive trading rounds, using a trading mechanism. This is thus a repeated exchange game of incomplete information \`{a} la \cite{au1995}.\\

\citet{de2003} analyzes a game between two risk-neutral market makers with asymmetric information. They focus on a very particular trading mechanism and proved that as market markers play more and more frequently ($n$ going to infinity), the equilibrium price process converges to a continuous martingale involving a Brownian motion in its description. This result gives thus an endogenous justification for the appearance of the Brownian term in the price dynamics: it is seen as an aggregation of the random day after day noises introduced  by the informed agents on their moves to avoid too fast revelation of their private information.\\

This idea was generalized in \citet{de2010} that argues that a market with incomplete information  can be modeled by a two player game between the informed sector and the uninformed sector. In first approximation these sectors are considered in \citet{de2010} as individually rational and risk-neutral agents. In such a description the uninformed sector is typically made of an aggregation of a large number of agents. It is then difficult to describe precisely the set of all possible actions of all those agents (which would be a complete action profile, one for each individual agent). This is the reason for modeling the market by an abstract trading mechanism. Such a mechanism simply maps a pair of actions to the resulting share transfer between the two sectors. The uninformed sector is then represented by a "representative agent" that selects rationally the action in that action space. Five conditions are imposed on the trading mechanism to model real world markets. When those conditions are satisfied, the trading mechanism is called natural.

In games with natural trading mechanisms, it appears that the equilibrium price processes converge, as the trading frequency increases. The limit process belongs to a very particular class of Brownian martingale referred to as the class of "Continuous Martingale of Maximal Variation" (CMMV, see below definition \ref{def:CMMV}).

In fact, the particular mechanism analyzed in \citep{de2003} is a natural mechanism and the dynamics observed in that paper is a particular CMMV. Let us emphasize that the asymptotic behavior of the prices is completely independent of the natural trading mechanism used to model the market. We refer to that result as the universality of the CMMV class.\footnote{This universality of the CMMV class is a strong and surprising result. It is of the same vein as the universality of the normal law in the central limit theorem. Actually, the convergence to a CMMV is proved in \citep{de2010} as a consequence of the central limit theorem.}~~\\

This universality of the CMMV class is still reinforced by the result of \citet{fabien_these}. This class is robust to the introduction of classical derivative assets: instead of considering just one risky asset, he considers a multi-asset model with one underlying asset and a family of monotonic derivatives. In that framework he shows that the limit of the price process of each asset is a CMMV.
~~\\

In the present paper, we extent this universality result, showing that the CMMV class also appears in a model with risk-averse agents. This result suggests  that the CMMV class should be used in finance to model asset price evolutions. Notice that Black and Scholes dynamics is a particular CMMV.

\section{The content of this paper}
The current paper analyzes the consequences of introducing risk aversion in the model. As mentioned above, the uninformed sector is made of a large number of individual agents that typically display risk-averse behavior. It is therefore natural to assume that the representative agent, called player 2, will be risk-averse. On the other hand, the informed agent (player 1) typically represents a big institutional investor and it is natural to model it as a risk-neutral agent.\\

This risk aversion is modeled with the introduction of a concave utility function in player 2's payoff: he maximizes the expected utility of the final value of its portfolio. Due to this utility function, we are not in front of a zero-sum game anymore as it was the case in the previous mentioned papers. This makes the analysis more involved, the notions of value and optimal strategies are here to be replaced by the notion of Nash equilibrium. \\

When dealing with very general trading mechanism with abstract action spaces, the notion of price is not obvious. In the setting of risk neutrality, as in \cite{de2010}, the price $L_t$ at time $t$ of the risky asset is defined  as the conditional expectation of its final value $L$ given the public information at that time. This makes sense in the risk-neutral case since this conditional expectation is precisely the price at which the uniformed agent would agree to trade this asset. But this doesn't make sense anymore in a risk-averse setting. We chose to bypass this issue by considering a particular exchange mechanism that naturally involves prices.


There are so many uninformed agents on the market that aside from its informational content, the action of player 1 will be quite marginal and the market without player 1 can be considered as a device that produces a price at which player 1 can buy or sell a unit of the asset. This amounts to view player 2 as a market maker. 

We consider therefore a very simple mechanism where the uninformed sector is a market maker that chooses at each period $q \in \{1\dots,n\}$ a price $p_q$ for one share of the risky asset, and player 1 will have to decide whether he wants to buy ($u_q=1$) or to sell ($u_q=-1$) at this price. Note that the price $p_q$ is thus the number of shares of the num\'{e}raire given in exchange for $1$ share of the risky asset. Since we suppose that the num\'{e}raire has a liquidation value equal to $1$, the price $p_q$ as thus to be interpreted as an actualized price of the risky asset. Both players try to maximize their utility for the liquidation value of their final portfolio.\\

We first prove the existence of a Nash equilibrium for a game with fixed length $n$. We then analyze equilibrium strategies. Under those strategies we analyze the law of the price process $(p_1,\dots,p_n)$ posted by player 2. Our next result is that this price dynamics is consistent with the classical financial theory of no-arbitrage: the so-called "fundamental theorem of finance" by \citet{ha1981} claims that if there is no arbitrage on the market, there exists an equivalent probability measure\footnote{Two probabilities on a probability space are equivalent if they have the same events of probability $0$.}, under which the actualized price process would be a martingale. In our model with risk aversion, the (actualized) equilibrium price process is not, in general, a martingale. However, as in \citet{ha1981}, there exists an equivalent probability measure under which this process become a martingale. 

This result is quite surprising in a context where Player 1 can only buy or sell a limited amount of assets. Indeed, the no-arbitrage theory assumes that if there would exist a trading strategy leading to a positive final value at no initial cost, there would be such a demand for this portfolio that the prices on the market would be affected and this arbitrage would disappear. This argument fails in our model because in any case Player 1 can only buy a limited amount at each period.\\

We then analyze the asymptotics of the price dynamics as the trading frequency increases ($n$ goes to infinity). The limit price process is a process in continuous time (where $t \in [0,1]$ is the proportion of elapsed trading periods). Our result is that under equivalent probability measure mentioned above, the limit price process is a CMMV.\\

Again, this result reinforces the universality of the CMMV class. More precisely it suggests that under the martingale equivalent probability measure, the price processes of the assets should be a CMMV. Note in particular that this class contains the most used dynamics in finance, which is one used in \cite{black1973pricing}. But there are plenty other dynamics in this class, that could be used to develop pricing and hedging models.\\

We now give a precise definition of CMMV:

\begin{definition}\label{def:CMMV}
A continuous martingale of maximal variation\footnote{The terminology "CMMV" was introduced in \citet{de2010} due to the following result. The $n$-variation of a martingale $(X_t)_{t \in [0,1]}$ is $V_X^n = \sum_{q=0}^{n-1} \| X_{\frac{q+1}{n}} - X_{\frac{q}{n}} \|_{L^1}$. Consider the problem $\mathcal{M}_n$ of maximizing the $n$-variation $V^n_X$ on the class of martingales $X$ with final distribution $\mu$ ($X_1 \sim \mu$). It is proved in \citet{de2010} that the martingales that solve $\mathcal{M}_n$, (i.e. martingales of maximal variation) converge in finite distributions, as $n$ goes to infinity, to a process that satisfies the above Definition \ref{def:CMMV}.} (CMMV) is a stochastic process $\Pi$ in continuous time $t \in [0,1]$,  which is a martingale that satisfies for all $t$: 

\[
\Pi_t=f(B_t,t)
\]
where B is a standard Brownian motion and $f: \mathbb{R}\times [0,1] \rightarrow \mathbb{R}$ is a function which is increasing in its first variable.
\end{definition}

Let us emphasize that they are two conditions in the above definition. One is that $f(x,t)$ is increasing with $x$, and the other is that $f(B_t,t)$ is a martingale, which implies strong restriction on $f$. It follows in particular from It\^{o}'s formula that  $f$ must satisfy the time reversed heat equation: 
\[  \frac{\partial f}{\partial t}+ \frac{1}{2} \frac{\partial^2 f }{\partial x^2 }=0.
\]

\section{Description of the model}~~

Let $\mu$ be a probability distribution on $\mathbb{R}$. The game $G_n(\mu)$ we are considering is the $n$-times repeated games that proceeds as follows: at stage $0$, nature selects once for all $L$ at random with probability distribution $\mu$. Player 1 observes $L$, not player 2, and both players know $\mu$. This initial information period is followed by $n$ trading periods. At each period $q \in \{1,\dots,n\}$, player 1 decides to buy ($u_q=1$) or to sell ($u_q=-1$) one unit of the risky asset. $u_q \in \{+1,-1\}$ is thus the action of player 1. Simultaneously, player 2 selects the price $p_q \in \mathbb{R}$ of the transaction at stage $q$.

\begin{remark}
Choices are thus considered to be simultaneous: in our model, player 1 does not observe player 2's action before deciding whether to sell or buy. This can be surprising at first glance. Indeed, one usually assumes that the trader will buy or sell after observing the current market maker's price. In fact, we argue that this sequential model where player 1 reacts to the price posted by player 2 is equivalent to our model. Indeed, we prove in section \ref{section_reduced_eq} that, due to Jensen's inequality, the equilibrium strategy of player 2 in the simultaneous game is a pure strategy. Player 2's move $p_q$ is thus completely forecastable for player 1 at period $q$. Player 1 would get no benefit from observing $p_q$ before selecting $u_q$.
Therefore the equilibria in the simultaneous game are also equilibria in the sequential game.

That the game can be seen as a sequential game does not make mixed strategies useless. Indeed, in a sequential game with full information players select at each stage the action that will maximize their continuation payoff and this can be done with a pure strategy. However, this game is of incomplete information and mixing is the keystone for player 1 to avoid too fast revelation of his private information.
\end{remark}

~~\\

We denote $h_{q}$ the history of plays until round $q$, i.e. $h_{q}=(u_1, p_1, \dots, u_q, p_q)$ 
and $\mathbb{H}_{q}$ the set of all possible histories until round $q$. At the end of stage $q$, $u_q$ and $p_q$ are publicly revealed. Then both players know and remember all the past actions taken by both of them. Since the game has perfect recall we can apply Kuhn's theorem and there is no loss of generality to assume that players use behavioral strategies. 

A behavioral strategy for player 1 in this game is a sequence $\sigma = (\sigma_1 , \dots , \sigma_n )$ with $\sigma_q : (h_{q-1} , L)  \rightarrow \sigma_q(h_{q-1} , L) \in \Delta( \{ -1, +1 \} )$, with the usual convention that $\Delta(S)$ is the set of probabilities on a finite set $S$. A behavioral strategy for player 2 is a sequence $\tau =(\tau_1 , \dots , \tau_n )$ with $\tau_q : h_{q-1}  \rightarrow \tau_q(h_{q-1}) \in \Delta( \mathbb{R})$, where $\Delta(\mathbb{R})$ denote the set of Borel probabilities on $\mathbb{R}$. A triple $(\mu,\sigma,\tau)$ induces a unique probability distribution  for $(L,h_n)$. When $X$ is a random variable, we denote $E_{\mu, \sigma, \tau} [X]$ its expectation with respect to this probability.

In this paper, player 1 is risk-neutral. His payoff in $G_n(\mu)$ is then the expected value of his final portfolio, up to the normalization factor $\frac{1}{\sqrt{n}}$ that will be explained in the fore-coming remark:
\begin{equation}\label{formula_payoff_p1}
g_1(\sigma, \tau)= E_{\mu, \sigma, \tau} \left[\frac{1}{\sqrt{n}}\sum_{q=1}^n u_q(L-p_q)\right]
\end{equation} 
The particularity of the current paper is that we consider a risk-averse player 2. The payoff he aims to minimize (we keep the formalism of the zero-sum games where player 2 is a minimizer) is then:
\begin{equation}\label{formula_payoff_p2} 
g_2(\sigma, \tau )= E_{\mu, \sigma, \tau} \left[H \left( \frac{1}{\sqrt{n}}\sum_{q=1}^n u_q(L-p_q )\right)\right]
\end{equation}
where H is a risk aversion function (convex and increasing).
\begin{remark}
The  normalization factor $\frac{1}{\sqrt{n}}$ introduced in the payoff functions requires some explanations. In this paper, we ultimately aim to analyze  the price dynamics on a market  in continuous time. We approach this continuous time market by a discrete time one, and the market is modeled  as a repeated exchange game between a market maker and an informed player, with a large number $n$ of repetitions.
At each period, there is a maximal quantity $\alpha_n$ of asset 1 that can be bought or sold at the price posted by the market maker. 
This maximal quantity $\alpha_n$ measures some how the liquidity of the market: it represents  the quantity of asset $R$ that can be exchanged at once at the market price without affecting the price. 
The existence of a maximal quantity acts as  a protection for the market maker against an insider trading. 
Would player 2 be a single fully rational player, he would prefer avoid trading with a more informed player 1, due to a kind of "No trade" theorem (see \citet{milgrom}). 
Because player 2 is an aggregation of different players, some of whose are in fact trading for liquidity reasons, he is unable to completely avoid trading and this is reflected here by setting $\alpha_n>0$.

We next argue that $\alpha_n$ should be proportional to $\frac{1}{\sqrt{n}}$.  Indeed, this appears clearly in the model where player 2 is risk-neutral. The function $H$ is then linear and the game can be considered as zero-sum. 
In this setting the behavior of the players is independent of $\alpha_n$ which is just a normalisation factor of utilities. In fact  this particular game corresponds to a natural exchange mechanism. It can be easily shown that this mechanism satisfies the five hypothesis mentioned above, and we know from \citet{de2010} that the value $V_n$ of the game is such that $\frac{V_n}{\sqrt{n}}$ converges to a finite quantity. This result points out that $\alpha_n$ should be taken proportional $\frac{1}{\sqrt{n}}$ in order to stay with bounded payoffs as $n$ increases. The exchanged quantities will then remain bounded and in the limit converge to the quantities exchanged in the continuous time model.
This normalization has no effect on the players in the risk-neutral case, but it has to be introduced  in the risk-averse case:  the lotteries player 2 is facing are of the same magnitude for all $n$.

\end{remark}

Throughout this paper, we will make the following regularity assumptions on $\mu$ and $H$:\\

\textbf{A1}: $\mu$ is a probability measure on $[0,1]$ absolutely continuous with respect to the Lebesgue measure. Its density function $f_{\mu}$ is strictly positive and $C^1$.\\

\textbf{A2}: $H$ is a strictly positive, strictly convex and $C^2$ function and $H'$ is Lipschitz-continuous: there exists strictly positive $\epsilon$ and $K$ such that for all $x \in \mathbb{R}: \epsilon < H'(x) < K$.

~~\\

Observe that in \textbf{A1} we assume that $L$ takes only values in the $[0,1]$ interval. We could obviously change this assumption to any compact interval by just a renormalization. \\

The game analyzed in this paper is in fact quite simple and much simpler than the general games with abstract strategy spaces considered in \citep{de2010}. However our proofs are very long and we have to apologize for the technicality of the following pages. The difficulties in the proofs come from different reasons.

The first difficulty is that strategies of the players in the game with $n$ rounds of trading are defined on different spaces. For example we will identify Player 1's strategy set with the set of probabilities on $\{-1,+1 \}^n \times \mathbb{R}$. Is it therefore quiet difficult to speak of the convergence of those strategies. We bypass this difficulty using embedding methods \`{a} la Skorokod, which are per se technical.

Next, equilibria in the finite game are defined implicitly as a fixed point, and there are no closed form formula to describe them. Proving the convergence of these equilibria is not straightforward.

Finally, we prove that the convergence is equivalent to the uniqueness of solution to a differential system with very strong nonlinearities. Those nonlinearity involve in particular the arbitrary risk aversion function $H$ and the arbitrary density function $f_{\mu}$. This differential system is not covered by the classical literature on differential equations.

Due to the length of these proofs, we decided to make a first synthetic overview of the argument in the next section.\\

\section{Results and structure of the paper}~~

In the first part of the paper (sections \ref{section_reduced_eq}, \ref{section_conditions}, \ref{section_existence}), we analyse the game $G_n(\mu)$ for a fixed number $n$ of stages. We first prove in section \ref{section_reduced_eq} that some equilibria of $G_n(\mu)$ can be found among the equilibria of the simpler game $\overline{G}_n(\mu)$ where the informed player 1 does not observe the actions of player 2, and player 2 is not allowed to randomize his moves. We then focus on the reduced game $\overline{G}_n(\mu)$. 

We show in section \ref{section_conditions} that this game can be completely reformulated: a strategy of player 1 can be identified with a probability $\Pi_n$ on the pair $(\omega,L)$ where $\omega=(u_1,\dots,u_n)$. We prove in subsection \ref{subsection:charac} that at an equilibrium, $\frac{\sum u_k p_k}{\sqrt{n}}$ should be equal to $\Psi_n(S_n(\omega))$ where $\Psi_n$ is a convex function and $S_n(\omega):=\frac{1}{\sqrt{n}}\sum_{k=1}^{n} u_k$. The function $\Psi_n$ can be used to parameterized player 2's strategies. Indeed, player 2's strategy $(p_1,\dots,p_n)$ can be recovered from $\Psi_n$. A pair of strategies can therefore be described by a pair $(\Pi_n,\Psi_n)$.

We further show in the same subsection that, in order to be an equilibrium in $\overline{G}_n(\mu)$, $(\Pi_n,\Psi_n)$ must satisfy the following conditions \textbf{(C1)} to \textbf{(C4)}, where  $\overline{\Pi}_n$ denotes the marginal of $\Pi_n$ on $(L,S_n)$,  where $\lambda_n$ denotes the uniform probability on $\omega$ and where $\overline{\lambda}_n$ is the law of $S_n(\omega)$ when $\omega\sim\lambda_n$.\\

\textbf{(C1)}: $E_{\overline{\lambda}_n}[\Psi_n(S_n)]=0$.\\

\textbf{(C2)}:  The marginal distribution $\overline{\Pi}_{n|L}$ of $L$ under $\overline{\Pi}_n$ is $\mu$. \\

\textbf{(C3)}: $\overline{\Pi}_{n}(L \in \partial\Psi_n(S_n))=1$ where $\partial\Psi_n$ denotes the subgradient of the convex function $\Psi_n$.\\

\textbf{(C4)}:  The marginal distribution  $\overline{\Pi}_{n|S_n}$ of $S_n$ under $\overline{\Pi}_n$, denoted $\nu_n$, is such that the density $ \frac{\partial \overline{\lambda}_{n}}{\partial \nu_n}$ is proportional to $E_{\overline{\Pi}_n}\left[ H'(LS_n-\Psi_n(S_n)) \mid S_n \right]$.\\

Conversely, one can always associate an equilibrium to a pair $(\Pi_n,\Psi_n)$ satisfying the four conditions. We prove in the subsection \ref{subsection:martingale_equiv} our first main result that claims the existence of a unique equivalent martingale measure as announced in the intorduction.

\begin{theorem_nonum}
Let $(\Pi_n,\Psi_n)$ be a pair satisfying the conditions \textbf{(C1)} to \textbf{(C4)}, $(p_q^n)_{q=1,\dots,n}$ be the price process that corresponds to $\Psi_n$ and $\Pi_{n | \omega}$ denote the marginal distribution of $\omega$ under $\Pi_n$. Then $\lambda_n$ is the unique probability equivalent to $\Pi_{n | \omega}$ such that the price process $(p_q^n)_{q=1,\dots,n}$ is a martingale when $(u_1,\dots,u_n)$ is $\lambda_n$-distributed.\\
\end{theorem_nonum}

This theorem justifies the following terminology:
the law of the price process under $\lambda_n$ is referred to as the martingale equivalent probability in the sequel of the paper. \\ 

We next deal in section \ref{section_existence} with the problem of existence of these reduced equilibria. This existence could possibly be proved by classical methods and approximations, using Nash-Glicksberg's theorem. One of the interest of our alternative proof is to introduce an operator $T_{\lambda}:\Delta(\mathbb{R})$ to $\Delta(\mathbb{R})$. This operator is the central tool to study the asymptotic properties of these equilibria.\\

To define this operator $T_{\lambda}$, we first focus on conditions \textbf{(C1)} to \textbf{(C3)}: for any measures $\lambda$ and $\nu$ on $\mathbb{R}$, there exists a unique pair $(\overline{\Pi}_{\nu},\Psi_{\nu,\lambda})$ satisfying \textbf{(C1)} with $\overline{\lambda}_n$ replaced by $\lambda$, \textbf{(C2)}, \textbf{(C3)} and such that the marginal distribution of $S_n$ under $\overline{\Pi}_{\nu}$ is $\nu$. Indeed, due to Fenchel lemma, condition \textbf{(C3)} can heuristically be interpreted by saying that $S_n$ is an increasing function\footnote{We remain very heuristical in our explanation at this point because $ \partial\Psi^{\sharp}_n$ is actually a correspondence and not a single valued function.} $g$ of $L$: $S_n=g(L)$. When $L$ is $\mu$-distributed, $g(L)$ must have distribution $\nu_n$. There is essentially a unique increasing function $g$ which satisfies that condition\footnote{Would $\nu$ have no atom, we would have $g(\ell)=F_{\nu}^{-1}(F_{\mu}(\ell))$, where $F_{\mu}$ and $F_{\nu}$ are the cumulative distribution functions of $\mu$ and $\nu$.} and we find therefore heuristically that $L=\Psi^{\prime}_n(S_n)=g^{-1}(S_n)$. This determines $\overline{\Pi}_n$ which is then the joint law of $(L,g(L))$ when $L$ is $\mu$-distributed. This also determines $\Psi_n$ up to a constant which can be found in a unique way to satisfy \textbf{(C1)}.\\

We are next seeking a measure $\nu$ that further satisfy \textbf{(C4)}. There exists a unique probability $\rho$ such that $\frac{\partial\rho}{\partial\nu}$ is proportional to $E_{\overline{\Pi}_{\nu}} [ H'(LS- \Psi_{\nu,\lambda}(S)) \mid S] $, since $H'>0$. Call $T_{\lambda}$ the map $\nu \rightarrow T_{\lambda}(\nu):= \rho$. With these notations, finding an equilibrium in $\overline{G}_n(\nu)$ is equivalent to find a measure $\nu$ satisfying the equation: 
\begin{equation}\label{eq_T}
T_{\overline{\lambda}_n}(\nu)=\overline{\lambda}_n 
\end{equation} 

The existence of equilibrium in the game $\overline{G}_n(\mu)$ is finally proved in section \ref{section_existence} by showing that the operator $T_{\overline{\lambda}_n}$ is onto the space of measures. We first prove the continuity of the operator $T_{\overline{\lambda}_n}$ in term of Wasserstein distance $W_2$. The onto property of $T_{\overline{\lambda}_n}$ results then from an application of the KKM theorem. Our second main results follows:

\begin{theorem_nonum}
There exists a Nash equilibrium in $\overline{G}_n(\mu)$, and therefore there also exists an equilibrium in the original game $G_n(\mu)$.
\end{theorem_nonum}

In the sequel of the paper, we focus on the equilibrium strategy of player 2, refereed to as the price process, and its properties when players play more and more frequently. In order to analyze the asymptotics of the price process, we have first to prove that any sequence $\nu_n$ converges (in Wasserstein distance $W_2$), where for all $n$, $\nu_n$ is a the solution of equation (\ref{eq_T})

Section \ref{section_convergence} of the paper is devoted this proof. Remember that $\overline{\lambda}_n$ is the law of $\frac{1}{\sqrt{n}}\sum_{q=1}^n u_q$ when $u_q$ are independent and centered. Due to the central limit theorem, $\overline{\lambda}_n$ converges to the normal law that we denote $\overline{\lambda}_{\infty}$. On the other hand, using a compactness argument, we can prove that any such sequence $(\nu_n)_{n \in \mathbb{N}}$ has an accumulation point $\nu$ satisfying 
\begin{equation}\label{rcbty}T_{\overline{\lambda}_{\infty}}(\nu)=\overline{\lambda}_{\infty}.
\end{equation} 

We then prove that there is a unique solution to this equation. This implies obviously that the sequence $(\nu_n)_{n \in \mathbb{N}}$ converges. To prove the above uniqueness result, we first prove that a solution to (\ref{rcbty}) should  also be a solution to the differential problem $(\mathcal{D})$ of Proposition \ref{prop:equiv} which results to have a unique solution (see Theorem \ref{Thm_uniqueness}). 

As shown in \ref{section_CMMV}, the convergence of $\nu_n$ implies the convergence of the law of the price process under the martingale equivalent measure, i.e. when $(u_1,\dots,u_n) \sim  \lambda_n$. More specifically, the discrete time price process $(p^n_1,\dots,p^n_n)$ can be represented by the continuous time price process $t \rightarrow p^n_{\lfloor nt \rfloor}$, were $\lfloor nt \rfloor$ is the integer part of $nt$. We first show that the processes $p^n_{\lfloor nt \rfloor}$ under the law $\lambda_n$ can be represented (Skorokhod embedding) on the natural filtration $\mathcal{F}$ of a Brownian motion on a probability space $(\tilde{\Omega},\mathcal{F},\tilde{P})$. Indeed, there exists a sequence of variables $(\tilde{p}_q^n)_{q \in \{1,\dots,n \}}$ and an increasing sequence of stopping times $(\tau_q^n)_{q \in \{1,\dots,n \}}$ such that $\tilde{p}_q^n$ is $\mathcal{F}_{\tau_n^q}$-mesurable, and has the same distribution as $p_q^n$ when $(u_1,\dots,u_n) \sim \lambda_n$. We show in Theorem \ref{thm:equiv} that $\tilde{p}^n_{\lfloor nt \rfloor}$ converges in finite dimensional distribution to a limit process $Z_t$ defined on the space $(\tilde{\Omega},\mathcal{F},\tilde{P})$ and that results to be a CMMV.  

This is our third important result:

\begin{theorem_nonum}
Let $(p_q^n)_{q \in \{1,\dots,n\}}$ be an equilibrium strategy in the reduced game $\overline{G}_n(\mu)$. As $n$ goes to infinity, the law of the stochastic process $t \rightarrow p^n_{\lfloor nt \rfloor}$ under $\lambda_n$ (i.e. when $(u_1,\dots,u_n)\sim \lambda_n$) converges to the law of a continuous martingale of maximal variation $Z_t$.
\end{theorem_nonum}

We also show the convergence of the historical law of $p^n_{\lfloor nt \rfloor}$ (i.e. when $(u_1,\dots,u_n)$ are distributed according to the equilibrium strategy of player 1) and we finally prove that this limit law is equivalent to the law of the CMMV $Z_t$.\\

\section{Reduced equilibrium} \label{section_reduced_eq}

\begin{definition}
The reduced game $\overline{G}_n(\mu)$ is the game where player 1 does not observe player 2's actions and player 2 is not allowed to randomize his moves (he only uses pure strategies). 
\end{definition}

The aim of this section is to prove Proposition \ref{prop:reduced_to_general}. It states that any equilibrium in $\overline{G}_n(\mu)$ is an equilibrium in $G_n(\mu)$. 

In this paper a pure strategy of player 2 will be denoted $\overline{p}$. Such a strategy $\overline{p}$ is thus a vector $(\overline{p}_1,\dots,\overline{p}_n)$ where $\overline{p}_q$ is a map $\mathbb{H}_{q-1} \rightarrow \mathbb{R}$. $\overline{p}_q(h_{q-1})$ denotes then the deterministic action taken by player 2 after history $h_{q-1}$. Remark however that since player 2 does not randomize before stage $q$, the action he will take at stage $q$ is just a deterministic function of previous moves of player 1. Therefore, in this paper, a pure strategy of player 2 will be considered as a sequence $(\overline{p}_1,\dots,\overline{p}_n)$ where $\overline{p}_q$ is a function $\{-1,+1\}^{q-1} \rightarrow \mathbb{R}$.\\

The intuition behind Lemma \ref{lemma:pure_reduced_p2} hereafter is that to any mixed strategy of player 2, he will prefer the corresponding "average" pure strategy. More precisely, let $\sigma$ be a reduced strategy of player 1 and let $\tau$ be any strategy of player 2. Since player 1 does not observe player 2's move when using $\sigma$, one can assume that he picks his actions $u_1,\ldots,u_n$ after observing $L$ and before player 2's first move. $(\mu,\sigma)$ induces thus a probability on $(L,u_1,\ldots, u_n)$.
Player 2's strategy $\tau$ can then be viewed as a device to randomly chose $(p_1,\ldots, p_n)$ once $(u_1,\ldots ,u_n)$ has been selected: one first select $p_1$ with the lottery $\tau_1$, then one selects $p_2$ with the lottery $\tau_2(u_1,p_1)$ and so on. Therefore $\tau$ determines the conditional law of $p_q$ given $(u_1,\ldots, u_{q-1})$ and this conditionnal law does not depend on $\sigma$. Let $f^\tau_q(u_1,\dots,u_{q-1})$ denote the  expected value of this conditional law.
Note that $p_\tau:=(f^\tau_1,\ldots,f^\tau_n)$ is then a pure strategy for player 2 since it does not depend on $\sigma$. We have thus: 
$$E_{\mu, \sigma, \tau}[ p_q  \mid u_1, \dots , u_{q-1}]=f^\tau_q(u_1,\dots,u_{q-1}).$$
Observe also that the law of $p_q$ given $(u_1, \dots , u_{q-1})$ is just the law of $p_q$ given $(u_1, \dots , u_{n},L)$.
Indeed, given $(u_1, \dots , u_{q-1})$, $p_q$ must clearly be independent of $(u_q, \dots , u_{n},L)$ since $(u_q, \dots , u_{n},L)$ was chosen before $p_q$ and $\tau_q$ just depends on $(u_1, \dots , u_{q-1})$.
Therefore $$E_{\mu, \sigma, \tau}[ p_q  \mid u_1, \dots , u_{n},L]=f^\tau_q(u_1,\dots,u_{q-1}).$$

We now compare the payoffs induced by a strategy $\tau$ with those induced by the corresponding strategy $p_{\tau}$.

\begin{lemma}\label{lemma:pure_reduced_p2}
For any $\tau$ a strategy of player 2, $p_{\tau}$ is such that for all reduced strategy $\sigma$ of player :
\[
\begin{cases}
g_2(\sigma,\tau) \geq g_2(\sigma,p_{\tau})\\
g_1(\sigma,\tau)=g_1(\sigma,p_{\tau})
\end{cases} 
\]

\end{lemma}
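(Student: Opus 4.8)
The plan is to exploit the two structural facts already established before the lemma statement, namely that
\[
E_{\mu,\sigma,\tau}[p_q \mid u_1,\dots,u_n,L]=f^\tau_q(u_1,\dots,u_{q-1})=E_{\mu,\sigma,p_\tau}[p_q \mid u_1,\dots,u_n,L],
\]
the last equality holding because $p_\tau$ is the deterministic strategy whose $q$-th coordinate is exactly $f^\tau_q$. The key observation is that under both $\tau$ and $p_\tau$ the induced law of $(L,u_1,\dots,u_n)$ is the same (it is determined by $(\mu,\sigma)$ alone, since $\sigma$ is a reduced strategy and player 1 chooses $u_1,\dots,u_n$ before seeing any price), so all the conditioning is with respect to a common distribution and only the conditional law of the prices differs.

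For the second equation $g_1(\sigma,\tau)=g_1(\sigma,p_\tau)$, I would simply compute. Writing out the payoff from \eqref{formula_payoff_p1} and conditioning on $(u_1,\dots,u_n,L)$,
\[
g_1(\sigma,\tau)=E\left[\frac{1}{\sqrt n}\sum_{q=1}^n u_q\bigl(L-E[p_q\mid u_1,\dots,u_n,L]\bigr)\right]
=E\left[\frac{1}{\sqrt n}\sum_{q=1}^n u_q\bigl(L-f^\tau_q\bigr)\right],
\]
where I first use the tower property and the fact that $u_q$ and $L$ are measurable with respect to the conditioning $\sigma$-algebra to pull them out. Since the resulting expression depends on $\tau$ only through $f^\tau_1,\dots,f^\tau_n$, and these are precisely the coordinates of $p_\tau$ (which are deterministic, hence equal to their own conditional expectations), the same computation gives $g_1(\sigma,p_\tau)$, and the two agree.

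For the inequality $g_2(\sigma,\tau)\ge g_2(\sigma,p_\tau)$, the idea is Jensen's inequality applied to the convex function $H$, as the surrounding text hints. The payoff $g_2$ from \eqref{formula_payoff_p2} is $E\bigl[H(X)\bigr]$ with $X=\frac{1}{\sqrt n}\sum_q u_q(L-p_q)$. Conditioning on the $\sigma$-algebra $\mathcal{G}:=\sigma(u_1,\dots,u_n,L)$ and using the conditional Jensen inequality,
\[
E\bigl[H(X)\mid \mathcal{G}\bigr]\ \ge\ H\bigl(E[X\mid\mathcal{G}]\bigr),
\]
and since $E[X\mid\mathcal{G}]=\frac{1}{\sqrt n}\sum_q u_q\bigl(L-f^\tau_q\bigr)$ is exactly the random variable whose expectation of $H$ gives $g_2(\sigma,p_\tau)$, taking total expectations yields $g_2(\sigma,\tau)\ge g_2(\sigma,p_\tau)$.

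The main obstacle, and the point needing the most care, is justifying that $E[X\mid\mathcal{G}]$ equals $\frac{1}{\sqrt n}\sum_q u_q(L-f^\tau_q)$, i.e. that conditioning acts coordinate-wise on the prices and that the $u_q(L-\cdot)$ weights come out of the conditional expectation. This requires the measurability statement $E[p_q\mid u_1,\dots,u_n,L]=f^\tau_q(u_1,\dots,u_{q-1})$ already derived in the text, and the linearity of conditional expectation across the sum over $q$. One should also confirm integrability so that Jensen and the tower property apply; under \textbf{A2}, $H$ and $H'$ are controlled ($\epsilon<H'<K$), and the prices enter linearly, so the integrability is routine and I would only remark on it rather than belabor it. I would present the computation of $E[X\mid\mathcal{G}]$ explicitly, since this single identity simultaneously drives both the equality for $g_1$ and the inequality for $g_2$.
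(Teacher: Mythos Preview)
Your proposal is correct and follows essentially the same approach as the paper: tower property conditioning on $(u_1,\dots,u_n,L)$, conditional Jensen for the convex $H$ to get the $g_2$ inequality, and linearity plus the identity $E[p_q\mid u_1,\dots,u_n,L]=f^\tau_q$ for the $g_1$ equality. Your added remarks on the common law of $(L,u_1,\dots,u_n)$ under $\tau$ and $p_\tau$ and on integrability are welcome clarifications but do not depart from the paper's argument.
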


\begin{proof}

To simplify notations, the expectation $E_{\mu,\sigma,\tau}$ is denoted $E$.

\[g_2(\sigma, \tau)= E [H( \sum_{q=1}^n u_q (L - p_q ))]= E [ E [H( \sum_{q=1}^n u_q (L - p_q )) \mid u_1, \dots , u_n, L]]\]

We now apply Jensen's inequality to the convex function H, and take into account the fact that $u_q$ and $L$ are $(u_1, \dots , u_n, L)$-measurable:

\[\begin{array}{rcl}g_2(\sigma, \tau) &\geq  & E [ H ( E [ \sum_{q=1}^n u_q (L - p_q ) \mid u_1, \dots , u_n, L])]\\& = & E [ H (  \sum_{q=1}^n u_q (L -  E [ p_q  \mid u_1, \dots , u_n,L]))]\\&= &E [ H (  \sum_{q=1}^n u_q (L -  f^\tau_q(u_1,\dots,u_{q-1})))]\\&=&g_2(\sigma,p_{\tau})\end{array}\]

Similarly, we have:

\begin{align*}
g_1(\sigma,\tau)=& E_{\mu,\sigma,\tau}[u_q(L-p_q)]\\
=&E_{\mu, \sigma,\tau} [ \sum_{q=1}^n u_q (L - E_{\tau}[p_q \mid u_1,\dots,u_{q-1}])]\\
=& E_{\mu, \sigma,\tau} [ \sum_{q=1}^n u_q (L - f^\tau_q(u_1,\dots,u_{q-1}) )]\\
=&g_1(\sigma,p_{\tau})
\end{align*}

\end{proof}

We also will need the following lemma:
\begin{lemma}\label{lemma:reduced_P1}
Let $\overline{p}$ be a pure strategy of player 2 and $\sigma$ any strategy of player 1 (even non reduced). Then, there exists a reduced strategy of player 1 denoted $ \tilde{\sigma}_{(\sigma,\overline{p})}$ which gives him the same payoff as $\sigma$ against $\overline{p}$, i.e. : \[   g_1(\tilde{\sigma}_{(\sigma,\overline{p})},\overline{p})=g_1(\sigma,\overline{p})  \] \end{lemma}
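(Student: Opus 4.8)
The plan is to exploit the fact that player 2 plays the \emph{pure} strategy $\overline{p}$, so that each posted price $p_q=\overline{p}_q(u_1,\dots,u_{q-1})$ is a deterministic function of player 1's own past actions. Consequently, observing the realized prices $p_1,\dots,p_{q-1}$ gives player 1 no information beyond what he already knows from $(u_1,\dots,u_{q-1})$, since he can reconstruct them himself. This redundancy is exactly what will let us replace an arbitrary $\sigma$ by a reduced strategy without affecting the payoff.

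Concretely, I would define the candidate reduced strategy $\tilde{\sigma}_{(\sigma,\overline{p})}$ by substituting the deterministic prices into $\sigma$. That is, for each $q$ I set
\[\tilde\sigma_q(u_1,\dots,u_{q-1},L):=\sigma_q\bigl(u_1,\overline{p}_1,u_2,\overline{p}_2(u_1),\dots,u_{q-1},\overline{p}_{q-1}(u_1,\dots,u_{q-2}),L\bigr).\]
Since the right-hand side depends only on $(u_1,\dots,u_{q-1},L)$, this is indeed a reduced strategy for player 1.

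Next I would show by induction on $q$ that the two pairs $(\sigma,\overline{p})$ and $(\tilde{\sigma}_{(\sigma,\overline{p})},\overline{p})$ induce the same probability distribution on $(L,u_1,\dots,u_n)$. The base case is $L\sim\mu$ under both. For the inductive step, note that in the game against the pure strategy $\overline{p}$ the realized history at stage $q$ is $h_{q-1}=(u_1,\overline{p}_1,\dots,u_{q-1},\overline{p}_{q-1}(u_1,\dots,u_{q-2}))$, so under $(\sigma,\overline{p})$ the conditional law of $u_q$ given $(L,u_1,\dots,u_{q-1})$ is $\sigma_q(h_{q-1},L)$, which equals $\tilde\sigma_q(u_1,\dots,u_{q-1},L)$ by the very definition of $\tilde\sigma$; and this is precisely the conditional law of $u_q$ under $(\tilde\sigma_{(\sigma,\overline{p})},\overline{p})$. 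Hence, if the laws of $(L,u_1,\dots,u_{q-1})$ coincide, they continue to coincide after appending $u_q$.

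Finally, because $\overline{p}$ is pure, each price $p_q=\overline{p}_q(u_1,\dots,u_{q-1})$ is the same fixed function of the $u$'s in both games, so the agreement of the law of $(L,u_1,\dots,u_n)$ upgrades to agreement of the law of the full history $(L,h_n)$. As $g_1$ is an expectation depending only on this law, I conclude $g_1(\tilde{\sigma}_{(\sigma,\overline{p})},\overline{p})=g_1(\sigma,\overline{p})$. I do not expect a genuine obstacle here; the only delicate point is the bookkeeping in the inductive step, namely verifying that the substituted history fed into $\sigma$ matches the realized history at every stage, which is guaranteed by the purity of $\overline{p}$.
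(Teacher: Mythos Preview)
Your proposal is correct and follows essentially the same approach as the paper: define $\tilde\sigma_q$ by plugging the deterministic prices $\overline{p}_1,\dots,\overline{p}_{q-1}(u_1,\dots,u_{q-2})$ into the history argument of $\sigma_q$, and then observe that against the pure $\overline{p}$ the resulting play is identical. The paper simply asserts the payoff equality as ``clear'' after giving the construction, whereas you spell out the inductive verification that the laws of $(L,u_1,\dots,u_n)$ coincide; this extra detail is fine and arguably makes the argument more complete.
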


\begin{proof}

The strategy $\sigma$ is not reduced, so $\sigma_q$ depends on $(u_1,\dots,u_{q-1},\overline{p}_1,\dots,\overline{p}_{q-1})$. But player 2 is completely deterministic since he uses strategy $\overline{p}$. Therefore he plays action $p_q=\overline{p}_q(u_1,\dots,u_{q-1})$, and the whole history $p_1,\dots,p_{q-1}$ is just a deterministic function of $u_1,\dots,u_{q-2}$. In the arguments of $\sigma_q$, we can replace $p_1,\dots,p_{q-1}$ by this function and we get in this way:

\[\tilde{\sigma}_{(\sigma,p),q} (u_1,\dots,u_{q-1}):=\sigma(u_1,\dots,u_{q-1},\overline{p}_1,\dots,\overline{p}_{q-1}(u_1,\dots,u_{q-2})) \]

which is a reduced strategy and clearly: $g_1(\tilde{\sigma}_{(\sigma,\overline{p})},\overline{p})=g_1(\sigma,\overline{p})$
\end{proof}

We are now ready to prove the main result of this section:

\begin{proposition}\label{prop:reduced_to_general}
If $(\sigma^{\star},\overline{p}^{\star})$ is an equilibrium in $\overline{G}_n(\mu)$, then $(\sigma^{ \star},\overline{p}^{\star})$ is an equilibrium in $G_n(\mu)$.
\end{proposition}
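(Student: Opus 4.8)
The plan is to assemble the proposition directly from the two preceding lemmas, verifying separately that neither player can profitably deviate in the full game $G_n(\mu)$, given that $(\sigma^{\star},\overline{p}^{\star})$ is an equilibrium of the reduced game $\overline{G}_n(\mu)$. Recall that being an equilibrium of $\overline{G}_n(\mu)$ means exactly two things: $\sigma^{\star}$ is a best reply, among \emph{reduced} strategies of player 1, to $\overline{p}^{\star}$ (it maximizes $g_1(\cdot,\overline{p}^{\star})$); and $\overline{p}^{\star}$ is a best reply, among \emph{pure} strategies of player 2, to $\sigma^{\star}$ (it minimizes $g_2(\sigma^{\star},\cdot)$). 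I must upgrade each of these to a best reply against \emph{all} strategies available in $G_n(\mu)$, where player 1 may observe player 2's moves and player 2 may randomize.

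For player 1's deviations, fix an arbitrary (possibly non-reduced) strategy $\sigma$ and compare $g_1(\sigma,\overline{p}^{\star})$ with $g_1(\sigma^{\star},\overline{p}^{\star})$. The key observation is that player 2 is deterministic under $\overline{p}^{\star}$, so observing his moves carries no new information; this is precisely the content of Lemma~\ref{lemma:reduced_P1}, which produces a reduced strategy $\tilde{\sigma}_{(\sigma,\overline{p}^{\star})}$ with $g_1(\tilde{\sigma}_{(\sigma,\overline{p}^{\star})},\overline{p}^{\star})=g_1(\sigma,\overline{p}^{\star})$. Since $\tilde{\sigma}_{(\sigma,\overline{p}^{\star})}$ is reduced and $\sigma^{\star}$ is optimal against $\overline{p}^{\star}$ among reduced strategies, I obtain $g_1(\sigma^{\star},\overline{p}^{\star})\geq g_1(\tilde{\sigma}_{(\sigma,\overline{p}^{\star})},\overline{p}^{\star})=g_1(\sigma,\overline{p}^{\star})$, so no deviation of player 1 is profitable.

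For player 2's deviations, fix an arbitrary (possibly mixed) strategy $\tau$ and consider the associated average pure strategy $p_{\tau}$. Since $\sigma^{\star}$ is a reduced strategy, Lemma~\ref{lemma:pure_reduced_p2} applies and gives $g_2(\sigma^{\star},\tau)\geq g_2(\sigma^{\star},p_{\tau})$, i.e. the deterministic average is at least as good for the minimizer. As $p_{\tau}$ is pure and $\overline{p}^{\star}$ minimizes $g_2(\sigma^{\star},\cdot)$ among pure strategies, $g_2(\sigma^{\star},\overline{p}^{\star})\leq g_2(\sigma^{\star},p_{\tau})$. Chaining the two inequalities yields $g_2(\sigma^{\star},\overline{p}^{\star})\leq g_2(\sigma^{\star},\tau)$, so no deviation of player 2 is profitable either. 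Together these two facts establish that $(\sigma^{\star},\overline{p}^{\star})$ is a Nash equilibrium of $G_n(\mu)$.

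There is little genuine difficulty here once the two lemmas are in hand: the whole substance of the argument has been displaced into them. The only point demanding care is matching the hypotheses correctly — invoking Lemma~\ref{lemma:reduced_P1}, which requires player 2 to play a \emph{pure} strategy, on player 1's side, and Lemma~\ref{lemma:pure_reduced_p2}, which requires $\sigma^{\star}$ to be \emph{reduced}, on player 2's side — together with keeping track of the fact that player 2 is a minimizer, so that the inequality coming from Jensen's lemma points in the direction that favors $\overline{p}^{\star}$.
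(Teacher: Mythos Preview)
Your proof is correct and follows essentially the same approach as the paper: both arguments invoke Lemma~\ref{lemma:reduced_P1} to handle player~1's deviations (reducing an arbitrary strategy to a reduced one with the same payoff against the pure $\overline{p}^{\star}$) and Lemma~\ref{lemma:pure_reduced_p2} to handle player~2's deviations (replacing an arbitrary $\tau$ by its average $p_\tau$ via Jensen). The only cosmetic difference is that you treat the two players in the opposite order.
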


\begin{proof}

For all player 2's strategy $\tau$ in $G_n(\mu)$, we have:

 \[g_2(\sigma^{\star},\overline{p}^{\star}) \leq g_2(\sigma^{\star},p_{\tau}) \leq g_2(\sigma^{\star},\tau)\]
where $p_{\tau}$ is defined as above. Indeed the first inequality just indicates that the pure strategy $p_{\tau}$ is not a profitable deviation from the equilibrium strategy $\overline{p}^{\star}$ in $\overline{G}_n(\mu)$. The second inequality comes from Lemma \ref{lemma:pure_reduced_p2}.

Let $\sigma$ be any strategy of player 1. With the notation of Lemma \ref{lemma:reduced_P1} we get:

\[ g_1(\sigma,\overline{p}^{\star})=g_1(\tilde{\sigma}_{(\sigma,\overline{p}^{\star})},\overline{p}^{\star}) \leq g_1(\sigma^{\star},\overline{p}^{\star})
\]

where the inequality follows from the fact that $\tilde{\sigma}$ is a reduced strategy and can thus not be a profitable deviation from $\sigma^{\star}$ for player 1.
\end{proof}

Based on the previous proposition, equilibria in $\overline{G}_n(\mu)$ will be referred to as the reduced equilibria in $G_n(\mu)$. In the sequel of this paper, we will only focus  on the reduced equilibria of $G_n(\mu)$.

\section{Characterisation of equilibrium}\label{section_conditions}~~

In subsection \ref{subsection:Alt_repr} we give an other representation of the strategy spaces in $\overline{G}_n(\mu)$. This representation is needed in subsection \ref{subsection:charac}, where we provide necessary and sufficient equilibrium conditions.\\

Finally, in subsection \ref{subsection:martingale_equiv}, we prove that the price process posted by player 2 in any equilibrium  is a martingale when the past actions of player 1 are uniformly distributed. Moreover, we prove that the uniform distribution is the only probability on player 1's actions under which the price process becomes a martingale.

\subsection{Alternative representation of the strategy spaces}\label{subsection:Alt_repr}~~

When playing a reduced strategy player 1 does not observe player 2's actions and we can therefore assume that he selects his actions after getting the information $L$ and before the first move of player 2. Thus, joint with $\mu$, a reduced strategy $\sigma$ induces a joint law $\Pi_n$ on $(L,\omega)$ where $\omega=(u_1,\dots,u_n)$ belongs to $\Omega_n:=\{ -1,+1 \}^n$. The marginal $\Pi_{n | L}$ of $\Pi_n$ on $L$ is clearly $\mu$. We can further recover the strategy $\sigma$ from $\Pi_n$ computing the conditional probabilities given $L$. Therefore the player 1's strategy space may be viewed as the set of probabilities $\Pi_n$ in $\Delta(\mathbb{R}\times \Omega_n)$ such that $\Pi_{n | L}=\mu$.\\

We first prove that player 2's strategy space in $\overline{G}_n(\mu)$ can be identified with a set $\mathbb{X}_n$ of random variables. \\

Let us consider the set of pure strategies $\mathcal{P}$ of player 2. If $\overline{p} \in \mathcal{P}$, then $\overline{p}_q$ is  a function $\Omega_n \rightarrow \mathbb{R}$ which is measurable with respect to $(u_1,\dots,u_{q-1})$. Note that the strategy $\overline{p}$ only appears in the payoff functions (see equations \ref{formula_payoff_p1} and \ref{formula_payoff_p2}) thought the quantity  $ X_{n,\overline{p}}(\omega):= \frac{1}{\sqrt{n}} \sum_{q=1}^{n} u_q \overline{p}_q( \omega)$. We can therefore identify the strategy space of player 2 with the set $\mathbb{X}_n:=\{X_{n,\overline{p}} | \overline{p} \in \mathcal{P} \} \subset \mathbb{R}^{\Omega}$.

Next lemma characterizes this set. Let $\lambda_n$ be the uniform probability on $\Omega_n$. Under $\lambda_n$, $(u_q)_{q=1,\dots,n}$ are mutually independent, and have zero expectation. We denote $L^1(\lambda_n)$ the set $L^1(\Omega_n,\mathcal{P}(\Omega_n),\lambda_n)$ which is just $\mathbb{R}^{\Omega_n}$ since $\Omega_n$ is finite.

\begin{lemma}\label{lemma:X_n}
$\mathbb{X}_n=\{ X \in L^1(\lambda_n)\mid E_{\lambda_n} [X] = 0 \}$
\end{lemma}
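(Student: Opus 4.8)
The plan is to prove the two inclusions separately, the first being a one-line computation and the second a discrete martingale-representation argument.

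First I would establish $\mathbb{X}_n \subseteq \{X \in L^1(\lambda_n) \mid E_{\lambda_n}[X]=0\}$. Fix $\overline{p} \in \mathcal{P}$. By linearity, $E_{\lambda_n}[X_{n,\overline{p}}] = \frac{1}{\sqrt{n}}\sum_{q=1}^n E_{\lambda_n}[u_q\,\overline{p}_q(\omega)]$. Since $\overline{p}_q$ is measurable with respect to $(u_1,\dots,u_{q-1})$ while $u_q$ is, under $\lambda_n$, independent of $(u_1,\dots,u_{q-1})$ with $E_{\lambda_n}[u_q]=0$, each term factorizes as $E_{\lambda_n}[u_q]\,E_{\lambda_n}[\overline{p}_q(\omega)]=0$. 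Hence $E_{\lambda_n}[X_{n,\overline{p}}]=0$.

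The reverse inclusion is the substantive one: given any centered $X\in\mathbb{R}^{\Omega_n}$ I must produce a predictable $\overline{p}$ with $X=X_{n,\overline{p}}$. I would introduce the dyadic filtration $\mathcal{F}_q=\sigma(u_1,\dots,u_q)$ (with $\mathcal{F}_0$ trivial) and the Doob martingale $M_q:=E_{\lambda_n}[X\mid\mathcal{F}_q]$. Then $M_0=E_{\lambda_n}[X]=0$ and $M_n=X$, so telescoping gives $X=\sum_{q=1}^n D_q$ with $D_q:=M_q-M_{q-1}$ satisfying $E_{\lambda_n}[D_q\mid\mathcal{F}_{q-1}]=0$. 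The key observation is that, after conditioning on $\mathcal{F}_{q-1}$, the only remaining randomness in the $\mathcal{F}_q$-measurable variable $D_q$ is the single sign $u_q\in\{-1,+1\}$. A function $\phi$ of one such variable with $\frac12(\phi(+1)+\phi(-1))=0$ forces $\phi(-1)=-\phi(+1)$, i.e. $\phi(u_q)=\phi(+1)\,u_q$. Applying this with $u_1,\dots,u_{q-1}$ frozen, there is a function $a_q$ of $(u_1,\dots,u_{q-1})$ with $D_q=a_q(u_1,\dots,u_{q-1})\,u_q$. Setting $\overline{p}_q:=\sqrt{n}\,a_q$ yields a predictable pure strategy for which $\frac{1}{\sqrt n}u_q\overline{p}_q=D_q$, whence $X_{n,\overline{p}}=\sum_{q=1}^n D_q=X$.

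I expect the main obstacle to be making precise the reduction ``conditionally on $\mathcal{F}_{q-1}$, $D_q$ is a linear function of $u_q$'' and defining $a_q$ as a genuine $(u_1,\dots,u_{q-1})$-measurable function; this is exactly the finite predictable representation property of the Bernoulli filtration. As a sanity check, or as an alternative self-contained route, one could argue by dimension: the linear map $\overline{p}\mapsto X_{n,\overline{p}}$ sends the space $\mathcal{P}$, of dimension $\sum_{q=1}^n 2^{q-1}=2^n-1$, into the $(2^n-1)$-dimensional hyperplane of centered variables, and injectivity forces the image to be the whole hyperplane. Injectivity follows by peeling off the top term: if $X_{n,\overline{p}}=0$ identically, then fixing $u_1,\dots,u_{n-1}$ and differencing the two values $u_n=\pm1$ leaves only the $q=n$ contribution and forces $\overline{p}_n=0$, after which one induces downward on $q$.
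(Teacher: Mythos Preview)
Your proof is correct and follows essentially the same route as the paper: both arguments construct the Doob martingale $M_q=E_{\lambda_n}[X\mid u_1,\dots,u_q]$ and represent each increment $M_q-M_{q-1}$ as $u_q$ times a $(u_1,\dots,u_{q-1})$-measurable coefficient, which the paper writes explicitly as $\overline{p}_q=\frac{\sqrt{n}}{2}\bigl(X^q(u_1,\dots,u_{q-1},1)-X^q(u_1,\dots,u_{q-1},-1)\bigr)$. Your dimension-counting alternative is a pleasant extra that the paper does not include.
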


\begin{proof}
Let $X \in \mathbb{X}_n$. Then $X=X_{n,\overline{p}}$ for some $\overline{p} \in \mathcal{P}$. Since $\Omega_n$ is a finite set, $X$ as a map from $\Omega_n$ to $\mathbb{R}$ belongs to $L^1(\lambda_n)$. Moreover, using that $\overline{p}_q$ is $(u_1,\dots,u_{q-1})$ measurable:

\[E_{\lambda_n}\left[\frac{1}{\sqrt{n}} \sum_{q=1}^{n} u_q p_q\right]=E\left[\frac{1}{\sqrt{n}} \sum_{q=1}^{n} E_{\lambda_n}[   u_q p_q| u_1,\dots,u_{q-1}]\right]=E\left[\frac{1}{\sqrt{n}} \sum_{q=1}^{n} p_q E_{\lambda_n}[   u_q ]\right]=0
\]

We thus have proved that $\mathbb{X}_n \subseteq \{ X \in L^1(\lambda_n) \mid E_{\lambda_n} [X] = 0 \}$.\\
Assume next that $X \in L^1(\lambda_n)$ is such that $ E_{\lambda_n} [X] = 0$.  For $k \in \{ 1, \dots,  n-1  \}$, we denote $X^k(u_1,\dots,u_k):=E_{\lambda_n}[X \mid u_1, \dots, u_k]$. Let $ \textbf{1}_{\{u_k=1\}}$ denotes the random variable that takes the value $1$ if $u_k=1$ and $0$ otherwise. An easy computation shows that $ \textbf{1}_{\{u_k=1\}}=\frac{u_q+1}{2}$. One gets therefore

\[  X^{k}(u_1,\dots,u_{k}) =  \textbf{1}_{\{u_k=1\}} X^{k}(u_1,\dots,u_k-1,1)  +    \textbf{1}_{\{u_k=-1\}} X^{k}(u_1,\dots,u_k-1,-1) \]

   \[ = \frac{  u_{k} +1   }{2}  X^{k}(u_1,\dots,u_{k-1},1)   + \frac{1-u_{k}}{2}  X^{k}(u_1,\dots,u_{k-1},-1) \]

\[ =  \frac{u_{k} \overline{p}_{k}(\omega)}{\sqrt{n}}  + \frac{ X^{k}(u_1,\dots,u_{k-1},1) + X^{k}(u_1,\dots,u_{k-1},-1)}{2}, \]

where: 

\begin{equation}\label{eq_X_p_to_p}
\overline{p}_{k}(\omega): = \frac{ X^{k}(u_1,\dots,u_{k-1},1)- X^{k}(u_1,\dots,u_{k-1},-1)}{2 / \sqrt{n}} 
\end{equation}  

Now observe that $$\begin{array}{rcl}X^{k-1}(u_1,\dots,u_{k-1})&=&E_{\lambda_n}[X^k | u_1,\dots,u_{k-1}]\\&=&\frac{ X^{k}(u_1,\dots,u_{k-1},1) + X^{k}(u_1,\dots,u_{k-1},-1)}{2}\end{array}$$

Therefore $X^{k}(u_1,\dots,u_{k}) - X^{k-1}(u_1,\dots,u_{k-1}) = \frac{u_{k} \overline{p}_{k}(\omega)}{\sqrt{n}}$.

Summing up those equalities for $k=1$ to $n$, we get:

\[ X^n(u_1,\dots,u_{n})= \frac{\sum_{k=1}^{n}  u_k \overline{p}_{k}(\omega)}{\sqrt{n}} + X_0 \]

But $X^n(u_1,\dots,u_{n}) = X$ and $X_0 = E_{\lambda}(X)=0$. We get thus:

\[ X =   \frac{\sum_{k=1}^{n} u_k \overline{p}_{k}(\omega)}{\sqrt{n}} = X_{n,\overline{p}},   \]

for the strategy $\overline{p}$ defined in (\ref{eq_X_p_to_p}).
\end{proof}

Let us make more precise the relation between $X$ and the strategy $\overline{p}$ such that $X=X_{n,\overline{p}}$.

\begin{proposition}\label{prop:X_to_p}
Let $X \in \mathbb{X}_n$. There exists a unique pure reduced strategy $\overline{p}$ such that $X= X_{n,\overline{p}}$. 
Moreover, we have the explicit formula:
\begin{equation}\label{equation_p_q}
 \overline{p}_q(u_1,\dots,u_{q-1}) = \sqrt{n} E_{\lambda_n} [ u_q X  \mid u_1, \dots, u_{q-1} ]
\end{equation}

\end{proposition}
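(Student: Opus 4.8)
The plan is to notice that existence is already in hand: in the proof of Lemma~\ref{lemma:X_n} the pure reduced strategy $\overline{p}$ with $X=X_{n,\overline{p}}$ was constructed explicitly via equation~(\ref{eq_X_p_to_p}) from any $X$ with $E_{\lambda_n}[X]=0$, and by Lemma~\ref{lemma:X_n} these are exactly the elements of $\mathbb{X}_n$. Hence the only things left to establish are the uniqueness of $\overline{p}$ and the explicit formula~(\ref{equation_p_q}). I would handle both at once: I will prove that \emph{every} pure reduced strategy $\overline{p}$ satisfying $X=X_{n,\overline{p}}$ must obey $\overline{p}_q(u_1,\dots,u_{q-1})=\sqrt{n}\,E_{\lambda_n}[u_q X\mid u_1,\dots,u_{q-1}]$. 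Since the right-hand side depends only on $X$ and not on the chosen $\overline{p}$, this forces any two representing strategies to coincide, giving uniqueness as a free byproduct.

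The core is then a single conditional-expectation computation. Writing $X=X_{n,\overline{p}}=\frac{1}{\sqrt{n}}\sum_{k=1}^n u_k\,\overline{p}_k(\omega)$ and plugging into the candidate formula, I get
\[
\sqrt{n}\,E_{\lambda_n}[u_q X\mid u_1,\dots,u_{q-1}]
=\sum_{k=1}^n E_{\lambda_n}\bigl[u_q u_k\,\overline{p}_k(\omega)\mid u_1,\dots,u_{q-1}\bigr],
\]
and I would evaluate the three ranges of $k$ using that, under $\lambda_n$, the $u_i$ are mutually independent, centered, with $u_i^2=1$. For $k<q$, both $u_k$ and $\overline{p}_k$ are $(u_1,\dots,u_{q-1})$-measurable while $u_q$ is independent and centered, so the term vanishes. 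For $k=q$, $\overline{p}_q$ is $(u_1,\dots,u_{q-1})$-measurable and $u_q^2=1$, so the term is exactly $\overline{p}_q$. For $k>q$, I would apply the tower property by conditioning first on $(u_1,\dots,u_{k-1})\supseteq(u_1,\dots,u_{q-1})$: there $u_q$ and $\overline{p}_k$ are measurable, leaving the factor $E_{\lambda_n}[u_k\mid u_1,\dots,u_{k-1}]=0$, so this term vanishes too. Only $k=q$ survives, yielding $\overline{p}_q$ and proving~(\ref{equation_p_q}).

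I do not expect a genuinely hard step here; the computation is short once the independence/martingale-difference structure of $\lambda_n$ is used correctly. The one place to be careful — the closest thing to an obstacle — is the $k>q$ range, where the naive attempt to condition directly on $(u_1,\dots,u_{q-1})$ is awkward because $\overline{p}_k$ is not measurable with respect to that $\sigma$-field; the clean resolution is the intermediate conditioning on $(u_1,\dots,u_{k-1})$. Beyond that, I would just remark that the formula exhibits $\overline{p}$ as a deterministic function of $X$, which closes uniqueness, and that the measurability of $\overline{p}_q$ with respect to $(u_1,\dots,u_{q-1})$ is automatic since the right-hand side is a conditional expectation given $(u_1,\dots,u_{q-1})$.
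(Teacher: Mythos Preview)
Your proposal is correct and follows essentially the same route as the paper: both compute $E_{\lambda_n}[u_q u_k\,\overline{p}_k\mid u_1,\dots,u_{q-1}]$ by splitting into the cases $k<q$, $k=q$, $k>q$, using the tower property with the intermediate $\sigma$-field $(u_1,\dots,u_{k-1})$ for $k>q$, and then read off both the formula and uniqueness. Your write-up is in fact a bit more explicit about the logical structure (existence from Lemma~\ref{lemma:X_n}, uniqueness as a consequence of the formula) than the paper's own proof.
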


\begin{proof}

Let $\overline{p}_j$ be $(u_1,\dots,u_{j-1})$-measurable. Then observe that if $j<q$: 
\[E_{\lambda_n}[\overline{p}_j u_q u_j | u_1,\dots,u_{q-1}]=\overline{p}_j u_j E_{\lambda_n}[u_q | u_1,\dots,u_{q-1}]=0\] 

On the other hand, if $j>q$, 
\begin{align*}
E_{\lambda_n}[\overline{p}_j u_q u_j | u_1,\dots,u_{q-1}]= &E_{\lambda_n}[E_{\lambda_n}[\overline{p}_j u_q u_j|u_1,\dots,u_{j-1}] | u_1,\dots,u_{q-1}]\\
= &E_{\lambda_n}[\overline{p}_j u_q  E_{\lambda_n}[u_j|u_1,\dots,u_{j-1}] | u_1,\dots,u_{q-1}] \\
= &0
\end{align*}

We get thus $E_{\lambda_n}[\overline{p}_j u_q u_j | u_1,\dots,u_{q-1}]= \overline{p}_q$ if $j=q$ and $0$ otherwise. Let now $X$ be in $\mathbb{X}_n$. According to the previous lemma, $X=X_{n,\overline{p}}$ for some $\overline{p}$. We can therefore write $E_{\lambda_n} [ u_q X  \mid u_1, \dots, u_{q-1} ]= E_{\lambda_n} [ u_q  \frac{\sum_{i=1}^n \overline{p}_i u_i}{\sqrt{n}}  \mid u_1, \dots, u_{q-1} ]=\frac{1}{\sqrt{n}}\sum_{i=1}^n E_{\lambda_n} [ \overline{p}_i u_q   u_i \mid u_1, \dots, u_{q-1} ]=\frac{\overline{p}_q}{\sqrt{n}}$ as announced. \end{proof}

We can now reformulate the completely reduced game $\overline{G}_n(\mu)$ as follow: player 1 selects $\Pi_n \in \Delta(\Omega_n \times\mathbb{R})$ such that $\Pi_{n | L}=\mu$. Simultaneously player 2 chooses $X \in \mathbb{X}_n$.

The payoff functions are now given by the formula:

\[
\begin{cases}
g_1(\Pi_n,X)= E_{\Pi_n} [ L S_n - X ] \\
g_2(\Pi_n,X)= E_{\Pi_n} [H (L S_n - X )] 
\end{cases}
\]

where $S_n(\omega) =\frac{1}{\sqrt{n}}\sum_{q=1}^{n} u_q$.

\subsection{Characterization of equilibrium strategies in $\overline{G}_n(\mu)$}
\label{subsection:charac}~~

The main result of this subsection is Corollary \ref{Cor:C1234} that provides necessary and sufficient conditions for a pair $(\Pi_n^{\star}, X^{\star})$ to be an equilibrium in $\overline{G}_n(\mu)$.

As a first step in the proof of that result, we prove in Proposition \ref{prop:allhistory} that any history $\omega=(u_1,\dots,u_n)$ has a positive probability at equilibrium. We express this property saying  that the equilibrium strategy $\Pi_n^{\star}$ of player 1 is completely mixed.

We next argue in Proposition \ref{prop:X_is_convex} that if a strategy $X$ of player 2 is such that there exists a completely mixed best response of player 1, then $X$ has a very particular form: $X(\omega)$ is a convex function of $S_n(\omega)$.

The next result is Proposition \ref{FOC_for_L}. It claims that if player 2 plays a strategy $X=\Psi_n(S_n)$ for a convex function $\Psi$ then $\Pi_n$ is a best response to $X$ if and only if $\Pi_n(L \in \partial \Psi_n(S_n))=1$, where $\partial \Psi_n$ is the subgradient of the convex function $\Psi_n$, as defined in equation (\ref{def:subgrad}).

Finally the first order condition for Player 2's strategy $\Psi(S_n)$ are derived in Proposition \ref{density_necessary}.

\begin{proposition}\label{prop:allhistory}

If player 2 has a best reply to a strategy $\Pi_n^{\star}$ of player 1 in $\overline{G}_n(\mu)$ then $\Pi_n^{\star}$ is completely mixed.

\end{proposition}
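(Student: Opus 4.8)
The plan is to prove the contrapositive: if $\Pi_n^\star$ is \emph{not} completely mixed, then player 2 has no best reply to it. First I would make player 2's best-reply problem explicit. Writing $\pi_\omega := \Pi_n^\star(\omega)$ for the marginal weight of a history $\omega \in \Omega_n$ and $\Pi_n^\star(\cdot \mid \omega)$ for the conditional law of $L$, and using the reformulated payoff $g_2(\Pi_n^\star, X) = E_{\Pi_n^\star}[H(LS_n - X)]$, the objective player 2 minimizes over $X \in \mathbb{X}_n$ decomposes as
\[
g_2(\Pi_n^\star, X) = \sum_{\omega \in \Omega_n} \pi_\omega \, \phi_\omega(X(\omega)), \qquad \phi_\omega(x) := \int_{\mathbb{R}} H(\ell S_n(\omega) - x)\, \Pi_n^\star(d\ell \mid \omega).
\]
Since $X$ is a function on the finite set $\Omega_n$, this displays the best-reply problem as minimizing a separable sum over the finitely many values $X(\omega)$, subject to the single linear constraint $E_{\lambda_n}[X] = 0$ that defines $\mathbb{X}_n$ (Lemma \ref{lemma:X_n}).

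Next I would record the one property of $\phi_\omega$ that drives everything: it is \emph{strictly decreasing}. Because $\mu$ is supported on $[0,1]$ and $S_n(\omega)$ is bounded, the integrand is bounded, so $\phi_\omega$ is finite and differentiable with $\phi_\omega'(x) = -\int H'(\ell S_n(\omega) - x)\, \Pi_n^\star(d\ell \mid \omega) < 0$, using $H' > 0$ from assumption \textbf{A2} and dominated convergence to differentiate under the integral. Intuitively, since player 2 minimizes and $H$ is increasing, he always gains by raising the value $X(\omega)$ on any history carrying positive probability; the budget constraint $E_{\lambda_n}[X]=0$ is the only thing preventing him from doing so on every coordinate at once.

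Now suppose $\Pi_n^\star$ is not completely mixed, so there is $\omega_0$ with $\pi_{\omega_0} = 0$; since the $\pi_\omega$ sum to $1$, there is some $\omega_1 \ne \omega_0$ with $\pi_{\omega_1} > 0$. Given any candidate $X \in \mathbb{X}_n$ and $\delta > 0$, I would define the perturbation $X^\delta$ that equals $X + \delta$ at $\omega_1$, equals $X - \delta$ at $\omega_0$, and agrees with $X$ elsewhere. Because $\lambda_n$ is uniform, both $\omega_0$ and $\omega_1$ receive weight $2^{-n}$, so $E_{\lambda_n}[X^\delta] = E_{\lambda_n}[X] = 0$ and $X^\delta \in \mathbb{X}_n$. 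The resulting change in the objective is $\pi_{\omega_1}[\phi_{\omega_1}(X(\omega_1)+\delta) - \phi_{\omega_1}(X(\omega_1))] + \pi_{\omega_0}[\phi_{\omega_0}(X(\omega_0)-\delta) - \phi_{\omega_0}(X(\omega_0))]$; the second bracket is multiplied by $\pi_{\omega_0} = 0$ and drops out, while the first is strictly negative since $\phi_{\omega_1}$ is strictly decreasing and $\pi_{\omega_1} > 0$. Hence $g_2(\Pi_n^\star, X^\delta) < g_2(\Pi_n^\star, X)$, so no $X$ can be a best reply; this proves the contrapositive.

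The argument is essentially soft, so there is no single hard computation; the points that need care are the two small verifications supporting the deviation, namely that the perturbation stays in $\mathbb{X}_n$ (which relies precisely on $\lambda_n$ assigning equal mass to $\omega_0$ and $\omega_1$) and that $\phi_\omega$ is genuinely strictly monotone (which relies on $H' > 0$ and on differentiating under the integral, legitimate here thanks to the compact support of $\mu$). The only structural input is the existence of a zero-probability ``sink'' coordinate $\omega_0$: it lets player 2 offload the budget constraint and strictly improve on any proposed strategy, which is exactly the mechanism by which a non-completely-mixed $\Pi_n^\star$ destroys the existence of a best reply.
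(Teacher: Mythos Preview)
Your proof is correct. It differs from the paper's in the space where the improving deviation is built: the paper stays in the price representation $\overline{p}$, locating the first stage $q^\star$ at which some positive-probability history $(u_1,\dots,u_{q^\star-1})$ is followed by a deterministic move of player~1, and then argues that player~2 can move $\overline{p}_{q^\star}$ at that node without bound (raising it if player~1 is sure to buy, lowering it otherwise), so no minimizer exists. You instead work directly in the $X$-representation and exploit Lemma~\ref{lemma:X_n}: since $\mathbb{X}_n$ is the hyperplane $\{E_{\lambda_n}[X]=0\}$ and $\lambda_n$ puts equal mass on every $\omega$, a zero-probability coordinate $\omega_0$ can absorb the budget shift needed to raise $X$ on a positive-probability coordinate $\omega_1$, strictly decreasing the separable objective. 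The two constructions are dual views of the same mechanism (translated through Proposition~\ref{prop:X_to_p}, the paper's price deviation moves $X$ up on the half of the continuation histories that occur and down on the half that do not); your version is a bit more direct and avoids the stagewise recursion, while the paper's has the minor expository advantage of remaining at the level of the posted prices, where the later martingale statements live.
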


\begin{proof}
$\Pi_n^{\star}$ is a probability on $(L,\omega)$ where $\omega=(u_1,\dots,u_n)$. It induces therefore a marginal distribution on $\omega_q=(u_1,\dots,u_q)$. Denote $\Gamma_q$ the set of $\omega_q$ such that $\Pi_n^{\star}(\omega_q)>0$. We want to prove that $\Gamma_n=\Omega_n$. Assume on the contrary that $\Gamma_n \neq \Omega_n$. We can then define $q^{\star}$ as the smallest $q$ such that $\Gamma_q \neq \Omega_q:=\{-1,+1 \}^q$. There is then a history $(u_1,\dots,u_{q^{\star}-1}) \in \Gamma_{q^{\star}-1}$ such that one of the histories $(u_1,\dots,u_{q^{\star}-1},1)$ or $(u_1,\dots,u_{q^{\star}-1},-1)$ does not belong to $\Gamma_q^{\star}$. Whence, this history $(u_1,\dots,u_{q^{\star}-1})$ has a positive probability under $\Pi_n^{\star}$ and is followed by a deterministic move of player 1 at stage $q^{\star}$. But after observing this history, player 2 could increase his benefit by posting a higher or  lower price according to the forecoming deterministic move of player 1. This contradicts the hypothesis that there is a best reply against $\Pi_n^{\star}$. Therefore, assuming $\Gamma_n \neq \Omega_n$ leads to a contradiction.
\end{proof}
The following  notions are classical and useful concept to deal with convex functions:
 The subgradient $\partial\Psi(s)$  of a convex function $\Psi$ at $s$ is defined as:
 
 \begin{equation}\label{def:subgrad}
\partial\Psi(s)=\{ \ell | \forall z: \Psi(z) \geq \Psi(s) + \ell (z-s) \}. 
 \end{equation}
 The Fenchel transform of $\Psi$ is defined as the convex function $\Psi^\sharp( x)$:

\begin{equation}\label{def:fenchel}
\Psi^\sharp(x):=\sup_{s\in\Bbb{R}} xs-\Psi(s).
\end{equation}
As well known (see \citet{rock}), if $\Psi$ is lower semi-continuous, then $\Psi=(\Psi^\sharp)^\sharp$.
Furthermore, we have the following equivalence due to Fenchel:

\begin{equation}\label{eq:fenchel}
x\in\partial \Psi(s)\Leftrightarrow xs=\Psi(s)+\Psi^\sharp(x)\Leftrightarrow s\in\partial \Psi^\sharp(x).
\end{equation}

We are now ready to state our next result:
\begin{proposition}\label{prop:X_is_convex} 
If player 1 has a completely mixed best reply $\Pi_n^{\star}$ to a strategy $X^{\star}$ of player 2 in $\overline{G}_n(\mu)$, then  $X^{\star} = \Psi_n(S_n(\omega))$ where $\Psi_n$ is a convex function such that $E_{\lambda_n}[\Psi_n(S_n)]=0$.
\\We further have $ \Psi_n=A^\sharp$ where \begin{equation}\label{eq_def_of_A_L}
A(\ell):=\displaystyle \max_{\omega' \in \Omega_n}  \ell S_n(\omega')-X^{\star}(\omega').
\end{equation} 
\end{proposition}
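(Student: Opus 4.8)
The plan is to first make player 1's best-reply problem explicit and then use the complete mixing of $\Pi_n^{\star}$ (Proposition \ref{prop:allhistory}) to force $X^{\star}$ to be a convex function of $S_n$.

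First I would disintegrate player 1's objective. With $X^{\star}$ fixed, player 1 maximizes $E_{\Pi_n}[LS_n-X^{\star}]$ over $\Pi_n$ with $\Pi_{n|L}=\mu$; writing $\Pi_n$ through its conditional law given $L=\ell$ turns the objective into $\int\big(\int(\ell S_n(\omega)-X^{\star}(\omega))\,\Pi_n(d\omega\mid \ell)\big)\mu(d\ell)$. Hence for $\mu$-almost every $\ell$ the conditional law is supported on the maximizers of $\omega\mapsto \ell S_n(\omega)-X^{\star}(\omega)$, whose value is exactly $A(\ell)$ from \eqref{eq_def_of_A_L}. As a finite maximum of affine functions of $\ell$, $A$ is convex. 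I would then pass to the scalar variable $s=S_n(\omega)$: setting $\Phi(s):=\min\{X^{\star}(\omega):S_n(\omega)=s\}$ on the range of $S_n$ and $+\infty$ elsewhere, grouping the maximization over $\omega$ by the level sets of $S_n$ gives $A=\Phi^{\sharp}$, so that $A^{\sharp}=\Phi^{\sharp\sharp}$ is the closed convex envelope of $\Phi$ and in particular $A^{\sharp}\le \Phi$.

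The crux is to exploit complete mixing. Since every $\omega\in\Omega_n$ has positive $\Pi_n^{\star}$-probability, for each $\omega$ there is some $\ell$ at which $\omega$ attains the maximum, i.e. $\ell S_n(\omega)-X^{\star}(\omega)=A(\ell)$. I would draw two consequences. First, if $X^{\star}(\omega)$ exceeded $\Phi(S_n(\omega))$, the minimizing $\omega'$ on the same level set would give a strictly larger value of $\ell S_n(\cdot)-X^{\star}(\cdot)$ for every $\ell$, so $\omega$ could never be a maximizer; hence $X^{\star}(\omega)=\Phi(S_n(\omega))$ for all $\omega$, and $X^{\star}$ depends on $\omega$ only through $S_n$. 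Second, writing $s=S_n(\omega)$, the attained-maximum identity becomes the Fenchel equality $\ell s=\Phi(s)+\Phi^{\sharp}(\ell)$, which yields $A^{\sharp}(s)=\Phi^{\sharp\sharp}(s)\ge \ell s-\Phi^{\sharp}(\ell)=\Phi(s)$; combined with $A^{\sharp}\le\Phi$ this forces $A^{\sharp}(s)=\Phi(s)$. Therefore $X^{\star}(\omega)=\Phi(S_n(\omega))=A^{\sharp}(S_n(\omega))$ for every $\omega$.

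Finally I would set $\Psi_n:=A^{\sharp}$, which is convex as a Fenchel transform, so that $X^{\star}=\Psi_n(S_n)$; the normalization $E_{\lambda_n}[\Psi_n(S_n)]=E_{\lambda_n}[X^{\star}]=0$ is immediate from $X^{\star}\in\mathbb{X}_n$ and Lemma \ref{lemma:X_n}. I expect the main obstacle to be the second consequence above: turning the mere existence of a maximizing $\ell$ into the statement that $X^{\star}$, viewed as a function of $S_n$, coincides with its convex envelope. The Fenchel equivalence \eqref{eq:fenchel} is precisely the device that upgrades ``$\Phi$ meets its convex envelope at $s$'' from the trivial inequality to the needed identity.
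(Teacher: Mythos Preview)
Your proof is correct and rests on the same two ingredients as the paper's: the best-reply condition forces, for $\mu$-a.e.\ $\ell$, the conditional law of $\omega$ given $L=\ell$ to sit on the argmax defining $A(\ell)$, and complete mixing then guarantees that every $\omega$ is such a maximizer for some $\ell$. The difference is purely in how you extract the conclusion from these facts. You introduce the auxiliary $\Phi(s)=\min\{X^\star(\omega):S_n(\omega)=s\}$ and argue in two steps: first $X^\star$ factors through $S_n$ (else $\omega$ is strictly dominated on its level set and can never be optimal), then the resulting function coincides with its biconjugate $\Phi^{\sharp\sharp}=A^\sharp$ via the Fenchel equality. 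The paper bypasses $\Phi$ entirely: from $A(\ell)\ge \ell S_n(\omega)-X^\star(\omega)$ for all $\ell$, with equality for some $\ell$ by complete mixing, it reads off directly that $X^\star(\omega)=\sup_\ell\big(\ell S_n(\omega)-A(\ell)\big)=A^\sharp(S_n(\omega))$, getting both the factorization through $S_n$ and the convexity in one stroke. Your decomposition is more explicit about which hypothesis buys which conclusion, while the paper's route is shorter and avoids the level-set bookkeeping.
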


\begin{proof}

Suppose that player 2 is playing $X^{\star}$ and player 1 wants to maximize his payoff $E_{\Pi_n}[LS-X^{\star}]$. After observing $L=\ell$, he will select an history $\omega \in V_{\ell}$ where $V_{\ell}$ is the set of $\omega'$ that solve the maximization problem $A(\ell)$ in equation (\ref{eq_def_of_A_L}).

Therefore \begin{equation}\label{equa_omega_in_V_L}
\Pi_n^{\star}(\omega \in V_L)=1.
\end{equation}

Since all history $\omega$ has a positive probability under $\Pi_n^{\star}$ we conclude that the set of values $\ell$ such that $\omega \in V_{\ell}$ can not be empty. Otherwise $\omega$ would never be selected by player 1 and would have zero probability under $\Pi_n^{\star}$.

Now remark that it follows from the definition of $A$ that for all $\ell$ and for all $\omega$: 

\begin{equation}\label{eq_ineq}
 A(\ell) \geq \ell S_n(\omega) - X^{\star}(\omega)
\end{equation}

Therefore, for all $\omega$, for all $\ell$:

$$X^{\star}(\omega) \geq  \ell S_n(\omega) - A(\ell)  $$

and thus for all $\omega$:

$$ X^{\star}(\omega) \geq   \displaystyle \sup_{\ell \in \mathbb{R}} ~~ \ell S_n(\omega) - A(\ell).$$

As observed above, for all $\omega$, the set of $\ell$ such that $\omega \in V_{\ell}$ is not empty.

For those $\ell$, inequality (\ref{eq_ineq}) is an equality, and thus:

$$X^{\star}(\omega) =   \displaystyle \sup_{\ell \in \mathbb{R}} ~~ \ell S_n(\omega) - A(\ell).$$

We get therefore $X^{\star}(\omega)=\Psi_n(S_n(\omega))$ with $\Psi_n(s)=A^\sharp(s)$. Observe that as supremum of affine functions of $s$, the map $s \rightarrow \Psi_n(s)$ is convex.

Finally, since $X \in \mathbb{X}_n$ we get with Lemma \ref{lemma:X_n} that $E_{\lambda_n}[\Psi_n(S_n(\omega))]=0$.
\end{proof}
With this notation, we have the following result:
\begin{proposition}\label{FOC_for_L} Consider the strategy $X^{\star}(\omega)=\Psi_n(S_n(\omega))$ of last proposition. A strategy $\Pi_n$ of player 1 is a best response to $X^{\star}$ if and only if:
$$\Pi_n(L \in \partial\Psi_n(S_n))=1.$$
\end{proposition}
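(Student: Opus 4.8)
The plan is to reduce Player 1's optimization against $X^{\star}=\Psi_n(S_n)$ to a pointwise (in $L$) maximization, and then read off the equality case through the Fenchel equivalence (\ref{eq:fenchel}). First I would write Player 1's objective as
\[
g_1(\Pi_n,X^{\star})=E_{\Pi_n}[LS_n-\Psi_n(S_n)].
\]
Since the feasible strategies $\Pi_n$ are exactly those with $\Pi_{n\mid L}=\mu$, the marginal on $L$ is frozen and only the conditional law of $\omega$ given $L$ is at Player 1's disposal. Conditioning on $L=\ell$, the integrand $\ell S_n(\omega)-\Psi_n(S_n(\omega))$ depends on $\omega$ only through $s:=S_n(\omega)$, so for each $\ell$ it is bounded above by $\sup_{s}[\ell s-\Psi_n(s)]=\Psi_n^{\sharp}(\ell)$.

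The key identity I would establish next is that this supremum is attained at an attainable value of $S_n$ and equals $A(\ell)$. Indeed $A$, being a maximum of finitely many affine functions of $\ell$ (see (\ref{eq_def_of_A_L})), is convex and continuous, hence lower semi-continuous; combined with $\Psi_n=A^{\sharp}$ this gives $\Psi_n^{\sharp}=(A^{\sharp})^{\sharp}=A$. Since $X^{\star}(\omega)=\Psi_n(S_n(\omega))$, we may rewrite $A(\ell)=\max_{\omega'\in\Omega_n}[\ell S_n(\omega')-\Psi_n(S_n(\omega'))]$, so the bound $\Psi_n^{\sharp}(\ell)=A(\ell)$ is actually realized by some attainable $\omega'$. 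Integrating the pointwise bound against $\mu$ then yields
\[
g_1(\Pi_n,X^{\star})\le E_{\mu}[\Psi_n^{\sharp}(L)]
\]
for every feasible $\Pi_n$, with a common, achievable, right-hand side.

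Finally I would characterize equality. As $LS_n-\Psi_n(S_n)\le\Psi_n^{\sharp}(L)$ holds $\Pi_n$-almost surely, the displayed bound is an equality if and only if $LS_n=\Psi_n(S_n)+\Psi_n^{\sharp}(L)$ holds $\Pi_n$-almost surely. By the Fenchel equivalence (\ref{eq:fenchel}), this is exactly $L\in\partial\Psi_n(S_n)$ $\Pi_n$-almost surely, that is, $\Pi_n(L\in\partial\Psi_n(S_n))=1$. Since $\Pi_n$ is a best response precisely when it attains the maximum $E_{\mu}[\Psi_n^{\sharp}(L)]$, both implications follow at once: the ``if'' direction constructs a maximizer (so the bound is indeed attained), and the ``only if'' direction forces the pointwise inequality to be tight $\Pi_n$-almost everywhere.

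I expect the main obstacle to be the middle step: making rigorous that the finite maximization over attainable $\omega$ coincides with the full Fenchel transform $\Psi_n^{\sharp}$ taken over all real $s$, which is what licenses the use of (\ref{eq:fenchel}) and rules out a spurious gap between the discrete feasible set $\{S_n(\omega):\omega\in\Omega_n\}$ and the convex structure encoded by $\Psi_n=A^{\sharp}$. Everything else — the conditioning on $L$, the measurability needed to integrate the conditional payoffs, and the passage from pointwise to almost-sure equality — is routine.
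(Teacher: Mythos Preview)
Your proof is correct and follows essentially the same route as the paper: both reduce the best-response condition to the pointwise equality case in Fenchel's inequality, using $\Psi_n=A^{\sharp}$ from the previous proposition. The only cosmetic difference is that you pass through the biconjugate identity $\Psi_n^{\sharp}=A^{\sharp\sharp}=A$ to reconcile the full sup over $s\in\mathbb{R}$ with the finite max over attainable $S_n(\omega)$, whereas the paper stays on the subgradient side, showing directly that $\omega\in V_\ell$ iff $S_n(\omega)\in\partial A(\ell)$ iff $\ell\in\partial A^{\sharp}(S_n(\omega))=\partial\Psi_n(S_n(\omega))$.
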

\begin{proof}
As explained in the beginning of the previous proof,  $\Pi_n$ is a best response to $X^{\star}$ if and only if $\Pi_n(\omega \in V_L)=1$. It follows from the definition of $V_\ell$ that  $\omega \in V_{\ell}$ if and only if $A(\ell)=S_n(\omega)\ell-X(\omega)$. On the other hand, it follows from the definition (\ref{eq_def_of_A_L}) of $A$ that for all $r$, $A(r) \geq S_n(\omega)r-X(\omega)$. Combining these two relations, we have that $A(r) \geq S(\omega)(r-\ell)+A(\ell)$ and thus $S(\omega) \in \partial A(\ell)$ or equivalently $\ell \in \partial A^\sharp(S_n(\omega))= \partial\Psi_n(S_n(\omega))$. Therefore $\omega \in V_{\ell}$ if and only if $\ell \in \partial\Psi_n(S_n(\omega))$, and the proposition is prooved.\end{proof}

The next proposition expresses the first order conditions of player 2 optimization problem. $\Pi^{\star}_{n | \omega}$ just denotes the marginal distribution of $\omega$ under $\Pi^{\star}_{n}$.

\begin{proposition}\label{density_necessary}

A strategy $X^{\star}$ is a best reply to a strategy $\Pi_n^{\star}$ of player 1, if and only if $\lambda_n$ has a density with respect to $\Pi_{n|\omega}^{\star}$ given by the formula:

\[ \frac{d\lambda_n}{d\Pi^{\star}_{n | \omega}}=\alpha_n E_{\Pi_n^{\star}}[ H'(LS_n-X_n^{\star}) \mid \omega  ]
\]
for a constant $\alpha_n$.
\end{proposition}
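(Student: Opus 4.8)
The plan is to recognize player 2's problem as the minimization of a convex, differentiable function over a linear hyperplane in the finite-dimensional space $\mathbb{R}^{\Omega_n}$, and to write down the associated first-order (stationarity) condition. Since the objective is convex and the feasible set is a subspace, this condition is simultaneously necessary and sufficient, which yields both directions of the claimed equivalence at once. Concretely, by Lemma \ref{lemma:X_n} the feasible set $\mathbb{X}_n$ is the linear subspace $W:=\{X\in\mathbb{R}^{\Omega_n}\mid \langle\lambda_n,X\rangle=0\}$, where $\langle a,b\rangle:=\sum_{\omega}a(\omega)b(\omega)$ is the standard inner product and $\lambda_n$ is identified with the vector $(\lambda_n(\omega))_\omega$. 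Player 2 minimizes $\Phi(X):=E_{\Pi_n^{\star}}[H(LS_n-X)]$ over $W$. Because $H$ is convex by \textbf{A2}, each term $H(\ell S_n(\omega)-X(\omega))$ is convex in the coordinate $X(\omega)$, so $\Phi$ is convex on $\mathbb{R}^{\Omega_n}$. Moreover, since $X^{\star}$ is assumed to be a best reply, Proposition \ref{prop:allhistory} forces $\Pi_n^{\star}$ to be completely mixed, so $\Pi_{n|\omega}^{\star}(\omega)>0$ for every $\omega$ and the density $\frac{d\lambda_n}{d\Pi_{n|\omega}^{\star}}$ is well defined.

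Next I would compute the gradient of $\Phi$. Writing $\Phi(X)=\sum_\omega \Pi_{n|\omega}^{\star}(\omega)\,E_{\Pi_n^{\star}}[H(LS_n-X(\omega))\mid\omega]$ and differentiating in the coordinate $X(\omega)$ — legitimate because $\Omega_n$ is finite and, by \textbf{A2}, $H$ is $C^2$ with $H'$ bounded, so one may differentiate under the finite expectation — gives
\[
\frac{\partial\Phi}{\partial X(\omega)}(X)=-\,\Pi_{n|\omega}^{\star}(\omega)\,E_{\Pi_n^{\star}}\!\left[H'(LS_n-X)\mid\omega\right].
\]
Hence $\nabla\Phi(X^{\star})(\omega)=-\Pi_{n|\omega}^{\star}(\omega)\,E_{\Pi_n^{\star}}[H'(LS_n-X^{\star})\mid\omega]$.

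Finally I would invoke the variational characterization of a minimizer of a convex differentiable function over a linear subspace: $X^{\star}$ minimizes $\Phi$ on $W$ if and only if the directional derivative $\langle\nabla\Phi(X^{\star}),d\rangle$ vanishes for every $d\in W$, i.e. if and only if $\nabla\Phi(X^{\star})\in W^\perp$. Since $W=\{\lambda_n\}^\perp$, we have $W^\perp=\mathbb{R}\,\lambda_n$, so the condition reads $\nabla\Phi(X^{\star})=c\,\lambda_n$ for some scalar $c$, that is
\[
\Pi_{n|\omega}^{\star}(\omega)\,E_{\Pi_n^{\star}}[H'(LS_n-X^{\star})\mid\omega]=-c\,\lambda_n(\omega)\qquad(\forall\,\omega).
\]
Because $H'>0$ by \textbf{A2} and $\Pi_{n|\omega}^{\star}(\omega)>0$, the left-hand side is strictly positive, forcing $c<0$; setting $\alpha_n:=-1/c>0$ rearranges this exactly into $\frac{d\lambda_n}{d\Pi_{n|\omega}^{\star}}=\alpha_n\,E_{\Pi_n^{\star}}[H'(LS_n-X^{\star})\mid\omega]$, the asserted formula; reading the equivalence in both directions closes the argument. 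The only genuinely delicate points are the appeal to complete mixedness (Proposition \ref{prop:allhistory}) to guarantee that the density is well defined, and the use of convexity of $\Phi$ to upgrade the stationarity condition from merely necessary to also sufficient; the remaining differentiation and bookkeeping are routine.
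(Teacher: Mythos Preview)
Your proposal is correct and follows essentially the same route as the paper: identify player~2's problem as a convex minimization over the hyperplane $\mathbb{X}_n=\{1\}^{\perp}$, write the first-order/stationarity condition, and read off that $\Pi_{n|\omega}^{\star}(\omega)\,E_{\Pi_n^{\star}}[H'(LS_n-X^{\star})\mid\omega]$ must be a scalar multiple of $\lambda_n(\omega)$. The only cosmetic difference is that you work in coordinates on $\mathbb{R}^{\Omega_n}$ and invoke Proposition~\ref{prop:allhistory} to guarantee $\Pi_{n|\omega}^{\star}(\omega)>0$, whereas the paper uses directional derivatives in $L^2(\lambda_n)$ and extracts $y_n>0$ directly from the orthogonality relation $y_nY_n=\text{const}$ combined with $Y_n>0$; both lead to the same conclusion.
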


\begin{proof}

Suppose that $X^{\star}$ is a best reply to a strategy $\Pi_n^{\star}$ of player 1. Then  $X^{\star}$ is a solution to the minimization problem of player 2:

\[ \displaystyle \min_{X \in \mathbb{X}_n}E_{\Pi_n^{\star}} [ H ( LS_n-X)]. \]

Note that the map $X \rightarrow E_{\Pi_n^{\star}} [ H ( LS_n-X)]$ is convex in $X$ and we are in front of a convex minimization problem. In such a problem the first order conditions are both necessary and sufficient. We get these first order conditions considering for fixed $\delta \in \mathbb{X}_n$ the map
$G:\epsilon \in \mathbb{R} \rightarrow G(\epsilon):= E_{\Pi_n^{\star}} ( H ( LS_n-X^{\star}+\epsilon \delta))$. This map must reach a minimum at $\epsilon=0$.

Observe now that $H$ is $C^1$ and so is $G$. We get then $G'(0)=E_{\Pi_n^{\star}} [ H' ( LS_n-X^{\star})\delta]$, and therefore, for all $\delta \in \mathbb{X}_n$:

\[ E_{\Pi_n^{\star}} [ H' ( LS_n-X^{\star})\delta]=0
\]

Since $\delta$ is just a function of $\omega$, this equality can also be written as: 

\[0=E_{\Pi_n^{\star}} [ E[ H'(LS_n-X_n^{\star})\delta | \omega]]=E_{\Pi_n^{\star}} [ \delta Y_n]
\]

where $Y_n(\omega):= E_{\Pi_{n}^{\star}}[ H'(LS^n-X^{n, \star}) \mid \omega  ]$. $Y_n(\omega)>0$ because $H'>\epsilon>0$ according to \textbf{A2}.

Since $\lambda_n$ puts a positive weight on every history, $\Pi_{n | \omega}^{\star}$ is absolutely continuous with respect to $\lambda_n$ and has a density $y_n= \frac{d\Pi^{\star}_{n | \omega}}{d\lambda_n}$.

We can rephrase previous conditions as: for all $\delta \in \mathbb{X}_n$,

\[   E_{\lambda_n} [ y_n  Y_n  \delta] = 0
\]

This relation can interpreted as an orthogonality relation in $L^2(\lambda_n)$ with the scalar product $\langle A,B \rangle:= E_{\lambda_n} [A B]$. The space $\mathbb{X}_n$ must then be orthogonal to $y_n Y_n$. But Lemma \ref{lemma:X_n} shows that $\mathbb{X}_n= \{1\}^{\perp}$. Therefore $y_nY_n$ is co-linear with $1$: it is equal to a positive constant that we denote $\frac{1}{\alpha_n}$. 

Since $y_n= \frac{d\Pi^{\star}_{n | \omega}}{d\lambda_n}>0$, $\lambda_n$ is absolutely continuous with respect to $\Pi_{n|\omega}^{\star}$ and we get $\frac{d\lambda_n}{d\Pi^{\star}_{n | \omega}}=\frac{1}{y_n}=\alpha_n Y_n$.
\end{proof}

\begin{corollary}\label{Cor:C1234}
A pair of strategy $(\Pi_n^{\star},X^{\star})$ is an equilibrium in $\overline{G}_n(\mu)$ if and only if $ \forall \omega: X^{\star}(\omega)=\Psi_n^{\star}(S_n(\omega))$, where
$\Psi_n^{\star}$ is a convex function that jointly satisfy with $\Pi_n^{\star}$ the following conditions \textbf{(C1)},\textbf{(C2)},\textbf{(C3)},\textbf{(C4)}.
\end{corollary}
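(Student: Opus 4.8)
The plan is to prove both implications by chaining the four propositions just established, with the genuine work concentrated in translating player 2's first-order condition between the ``fine'' level of histories $\omega$ and the ``coarse'' level of the statistic $S_n$.

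For necessity, suppose $(\Pi_n^{\star}, X^{\star})$ is an equilibrium. Since $X^{\star}$ is in particular a best reply to $\Pi_n^{\star}$, Proposition \ref{prop:allhistory} shows $\Pi_n^{\star}$ is completely mixed; since $\Pi_n^{\star}$ is then a completely mixed best reply to $X^{\star}$, Proposition \ref{prop:X_is_convex} yields $X^{\star} = \Psi_n^{\star}(S_n)$ for a convex $\Psi_n^{\star}$ with $E_{\lambda_n}[\Psi_n^{\star}(S_n)] = 0$, which is \textbf{(C1)} (recall $\overline{\lambda}_n$ is the law of $S_n$ under $\lambda_n$). Condition \textbf{(C2)} is immediate because any admissible $\Pi_n^{\star}$ has $L$-marginal $\mu$, and passing to the $(L,S_n)$-marginal $\overline{\Pi}_n$ preserves this. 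Condition \textbf{(C3)} follows from Proposition \ref{FOC_for_L}: the best-reply property gives $\Pi_n^{\star}(L \in \partial\Psi_n^{\star}(S_n)) = 1$, and since the event depends only on $(L,S_n)$ its $\overline{\Pi}_n$-probability is also $1$. The one computation to carry out is \textbf{(C4)}. Proposition \ref{density_necessary} gives the history-level identity $d\lambda_n/d\Pi_{n|\omega}^{\star} = \alpha_n\, E_{\Pi_n^{\star}}[H'(LS_n - X^{\star}) \mid \omega]$. I would then integrate an arbitrary test function $\phi(S_n)$ against $\lambda_n$, rewrite the expectation under $\Pi_{n|\omega}^{\star}$ using this density, and collapse the inner conditioning by the tower property, using $\sigma(S_n) \subseteq \sigma(\omega)$, so that $E_{\Pi_n^{\star}}[\,E_{\Pi_n^{\star}}[H'(\cdot)\mid\omega]\mid S_n] = E_{\Pi_n^{\star}}[H'(\cdot)\mid S_n]$. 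Identifying the resulting density of $\overline{\lambda}_n$ with respect to $\nu_n$ gives exactly \textbf{(C4)}.

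For sufficiency, assume $X^{\star} = \Psi_n^{\star}(S_n)$ and \textbf{(C1)}--\textbf{(C4)}. Condition \textbf{(C1)} and Lemma \ref{lemma:X_n} make $X^{\star}$ an admissible strategy of player 2, while \textbf{(C2)} makes $\Pi_n^{\star}$ admissible for player 1. Condition \textbf{(C3)}, read through Proposition \ref{FOC_for_L} (again using that the event is $(L,S_n)$-measurable, so its $\Pi_n^{\star}$- and $\overline{\Pi}_n$-probabilities coincide), certifies that $\Pi_n^{\star}$ is a best reply to $X^{\star}$. It remains to upgrade the coarse condition \textbf{(C4)} to the history-level optimality condition of Proposition \ref{density_necessary}, which is the crux. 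Here I would first exploit \textbf{(C3)} together with Fenchel's equivalence (\ref{eq:fenchel}): on the full-probability event $\{L \in \partial\Psi_n^{\star}(S_n)\}$ one has $LS_n - \Psi_n^{\star}(S_n) = (\Psi_n^{\star})^{\sharp}(L)$, so the gradient $H'(LS_n - \Psi_n^{\star}(S_n))$ equals $H'((\Psi_n^{\star})^{\sharp}(L))$, a function of $L$ alone. Because the equilibrium coupling $\overline{\Pi}_n$ ties $L$ to $S_n$ monotonically, the conditional expectation of this quantity given $\omega$ reduces to its conditional expectation given $S_n$, so \textbf{(C4)} upgrades to the history-level identity $d\lambda_n/d\Pi_{n|\omega}^{\star} = \alpha_n\, E_{\Pi_n^{\star}}[H'(LS_n - X^{\star})\mid\omega]$, which is precisely the first-order condition that Proposition \ref{density_necessary} shows to be necessary and sufficient for $X^{\star}$ to minimize player 2's convex objective. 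Hence $X^{\star}$ is a best reply and the pair is an equilibrium.

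The main obstacle is exactly this last passage from \textbf{(C4)} back to the history-level first-order condition. In the forward direction the tower property lets one aggregate freely, but in the reverse direction one must argue that nothing is lost by having specified player 2's optimality only through the $S_n$-marginals; the key that unlocks this is \textbf{(C3)}, which via Fenchel collapses $H'(LS_n - \Psi_n^{\star}(S_n))$ into a function of $L$ and, through the monotone equilibrium coupling of $L$ and $S_n$, makes the relevant conditional expectation $\sigma(S_n)$-measurable. Care is needed at the kinks of $\Psi_n^{\star}$ (equivalently the atoms of the $S_n$-marginal $\nu_n$), where $\partial\Psi_n^{\star}(S_n)$ is a nondegenerate interval and $L$ is only constrained to lie in it; one must check that the prescribed conditional law of $L$ given $S_n$ makes $E_{\Pi_n^{\star}}[H'((\Psi_n^{\star})^{\sharp}(L))\mid\omega]$ depend on $\omega$ only through $S_n$, so that the history-level and statistic-level conditions genuinely coincide.
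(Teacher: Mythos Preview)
You have misread condition \textbf{(C4)}. In the statement of Corollary~\ref{Cor:C1234} as it appears in the paper, \textbf{(C4)} is the \emph{history-level} identity
\[
\frac{d\lambda_n}{d\Pi_{n|\omega}^{\star}}=\alpha_n\,E_{\Pi_n^{\star}}\!\left[H'(LS_n-\Psi_n^{\star}(S_n))\mid\omega\right],
\]
i.e.\ exactly the condition furnished by Proposition~\ref{density_necessary}. The $S_n$-level condition you are calling \textbf{(C4)} is what the paper later introduces as \textbf{(C4')}. With the correct reading, the corollary is an immediate collation of the four preceding propositions: Propositions~\ref{prop:allhistory} and~\ref{prop:X_is_convex} give the convex form and \textbf{(C1)}; admissibility of $\Pi_n^{\star}$ gives \textbf{(C2)}; Proposition~\ref{FOC_for_L} gives \textbf{(C3)} (both directions); and Proposition~\ref{density_necessary} gives \textbf{(C4)} (both directions, since that proposition states a necessary \emph{and} sufficient first-order condition). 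No passage between the $\omega$-level and the $S_n$-level is needed here.

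Your extra computation is not wasted---it is essentially the content of Lemma~\ref{lemma:C4prime}, which the paper proves only \emph{after} Corollary~\ref{Cor:C1234}---but in the sufficiency direction your argument has a genuine gap. You claim that because \textbf{(C3)} and Fenchel's relation make $H'(LS_n-\Psi_n^{\star}(S_n))=H'((\Psi_n^{\star})^{\sharp}(L))$ a function of $L$, and because $L$ and $S_n$ are monotonically coupled under $\overline{\Pi}_n$, the conditional expectation given $\omega$ reduces to that given $S_n$. This fails precisely in the relevant finite-$n$ setting: $S_n$ takes finitely many values, so $\nu_n$ is purely atomic and $\partial\Psi_n^{\star}(S_n)$ is typically a nondegenerate interval; then $L$ is \emph{not} $\sigma(S_n)$-measurable, and the conditional law of $L$ given $\omega$ may depend on $\omega$ through more than $S_n$. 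The paper resolves this in Lemma~\ref{lemma:C4prime} not by proving the reduction for arbitrary $\Pi_n$, but by \emph{constructing} a specific $\Pi_n$ (uniform over $\{\omega:S_n(\omega)=s\}$ given $S_n=s$) for which $L$ is conditionally independent of $\omega$ given $S_n$; only for such $\Pi_n$ does \textbf{(C4')} imply the $\omega$-level \textbf{(C4)}.
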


\[
\begin{cases}
\textbf{(C1)} ~~ \Psi_n^\ast \text{ is such that } E_{\lambda_n}[\Psi_n^\ast(S_n(\omega)]=0 \\
\textbf{(C2)} ~~ \Pi_{n | L}^{\star}=\mu \\
\textbf{(C3)} ~~ \Pi_n^{\star}(L \in \partial\Psi_n(S_n))=1\\
\textbf{(C4)} ~~ \frac{\partial\lambda_n}{\partial \Pi_{n | \omega}^{\star}}=  \alpha_n E_{\Pi_n^{\star}}[ H'(LS_n-\Psi_n^{\star}(S_n)) | \omega  ] \text{ where $\alpha_n$ is a constant} \\
\end{cases}
\]

From now on, a pair $(\Pi_n^{\star},\Psi^{\ast}_n)$ satisfying \textbf{(C1)},\textbf{(C2)},\textbf{(C3)},\textbf{(C4)} will be referred to as an equilibrium in $\overline{G}_n(\mu)$ (instead of the pair $(\Pi_n^{\star},X^\ast)$, with $X^\ast:=\Psi^{\ast}_n(S_n)$.)

Remember that according to the results of section \ref{subsection:Alt_repr},  such a pair $(\Pi_n^{\star},\Psi^{\ast}_n)$ fully describes  a pair  of equilibrium strategies $(\sigma^\ast,\tau^\ast)$ in the original game $G_n(\mu)$.

\subsection{The price process and the martingale equivalent measure}
\label{subsection:martingale_equiv}~~

Before proving the existence of equilibrium in section \ref{section_existence}, let us emphasize that the above characterization of equilibrium implies that under an appropriate equivalent measure the price process is a martingale.

Consider an equilibrium $(\Pi_n^{\star},\Psi_n^{\star})$,  and denote $\overline{p}_1,\overline{p}_2(u_1),\dots,\overline{p}_n(u_1,\dots,u_{n-1})$ the corresponding pure strategy of player 2. 

 When $(u_1,\dots,u_n)$ are randomly selected by player 1 with lottery $\Pi_n^{\star}$, the law of this process $\overline{p}$ is called the historical law. We now prove that if $(u_1,\dots,u_n)$ are selected by the lottery $\lambda_n$, the process is a martingale.

\begin{theorem}\label{price_is_mart}
The price process $(\overline{p}_q^n)_{q=1,\dots,n}$ is a martingale under the probability $\lambda_n$. \\
\end{theorem}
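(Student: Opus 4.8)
The plan is to combine two earlier facts: the explicit formula for the posted price from Proposition~\ref{prop:X_to_p}, namely $\overline{p}_q=\sqrt{n}\,E_{\lambda_n}[u_q X^{\star}\mid u_1,\dots,u_{q-1}]$, and the structural result of Proposition~\ref{prop:X_is_convex} that player~2's equilibrium strategy depends on $\omega$ only through $S_n$, i.e.\ $X^{\star}=\Psi_n^{\star}(S_n)$. Write $\mathcal{F}_q:=\sigma(u_1,\dots,u_q)$ for the filtration generated by player~1's moves. Under the pure strategy $\overline{p}$ the posted prices are deterministic functions of the past moves, so the price-generated filtration is contained in $\mathcal{F}$; it therefore suffices to prove the martingale property with respect to $\mathcal{F}$, the coarser-filtration statement following by the tower rule. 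Since $\overline{p}_q$ is $\mathcal{F}_{q-1}$-measurable and $\overline{p}_{q+1}$ is $\mathcal{F}_q$-measurable, the value ``at time $q$'' of the process is $\overline{p}_{q+1}$, and the statement to prove is
\[
E_{\lambda_n}\!\left[\overline{p}_{q+1}\mid\mathcal{F}_{q-1}\right]=\overline{p}_q,\qquad q=1,\dots,n-1.
\]

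First I would substitute the formula of Proposition~\ref{prop:X_to_p} and collapse the nested conditioning with the tower property: since $\overline{p}_{q+1}=\sqrt{n}\,E_{\lambda_n}[u_{q+1}X^{\star}\mid\mathcal{F}_q]$, we get $E_{\lambda_n}[\overline{p}_{q+1}\mid\mathcal{F}_{q-1}]=\sqrt{n}\,E_{\lambda_n}[u_{q+1}X^{\star}\mid\mathcal{F}_{q-1}]$. As $\overline{p}_q=\sqrt{n}\,E_{\lambda_n}[u_q X^{\star}\mid\mathcal{F}_{q-1}]$, the claim reduces to the single identity
\[
E_{\lambda_n}[u_{q+1}X^{\star}\mid\mathcal{F}_{q-1}]=E_{\lambda_n}[u_q X^{\star}\mid\mathcal{F}_{q-1}].
\]

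The core of the argument is a symmetry (exchangeability) observation. Under $\lambda_n$ the coordinates $u_q,\dots,u_n$ are i.i.d.\ uniform on $\{-1,+1\}$ and independent of $\mathcal{F}_{q-1}$, so their conditional joint law given $\mathcal{F}_{q-1}$ is invariant under permutations, in particular under the transposition swapping $u_q$ and $u_{q+1}$. Moreover, once $u_1,\dots,u_{q-1}$ are fixed, $X^{\star}=\Psi_n^{\star}(S_n)$ is a \emph{symmetric} function of $(u_q,\dots,u_n)$, because it depends on these coordinates only through the sum $S_n=\tfrac{1}{\sqrt{n}}\sum_k u_k$. Applying the swap to the functional $u_q X^{\star}$ and using that $X^{\star}$ is left unchanged by it yields precisely $E_{\lambda_n}[u_q X^{\star}\mid\mathcal{F}_{q-1}]=E_{\lambda_n}[u_{q+1}X^{\star}\mid\mathcal{F}_{q-1}]$, which is the required identity and completes the proof.

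I expect the only delicate points to be bookkeeping rather than conceptual: fixing the indexing convention so that the conditioning falls on $\mathcal{F}_{q-1}$ (the value ``at time $q$'' being $\overline{p}_{q+1}$), and phrasing the exchangeability step so that conditioning on $\mathcal{F}_{q-1}$ genuinely leaves $(u_q,\dots,u_n)$ exchangeable. The crux is that the martingale property is a direct consequence of the symmetry of $\Psi_n^{\star}(S_n)$ in the still-to-be-played coordinates, a feature inherited from Proposition~\ref{prop:X_is_convex}; no equilibrium condition beyond $X^{\star}=\Psi_n^{\star}(S_n)$ is used.
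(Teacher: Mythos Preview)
Your proof is correct and uses essentially the same approach as the paper: the formula from Proposition~\ref{prop:X_to_p}, the fact $X^{\star}=\Psi_n^{\star}(S_n)$, and exchangeability of $(u_q,\dots,u_n)$ under $\lambda_n$ conditionally on $\mathcal{F}_{q-1}$. The only cosmetic difference is that the paper swaps $u_q$ with $u_n$ (rather than with $u_{q+1}$) to write $\overline{p}_q=\sqrt{n}\,E_{\lambda_n}[u_n\Psi_n(S_n)\mid u_1,\dots,u_{q-1}]$, so the martingale property follows immediately from the tower rule as a conditional expectation of a single terminal variable.
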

\begin{proof}
With equation (\ref{equation_p_q}) we have:

\begin{align}
\overline{p}^n_q(u_1,\dots,u_{q-1}) &= \sqrt{n} E_{\lambda_n} [ u_q X^{\star}  \mid u_1, \dots, u_{q-1} ]  \nonumber \\ 
&= \sqrt{n} E_{\lambda_n} [ u_q \Psi_n(S_n)  \mid u_1, \dots, u_{q-1} ] \nonumber \\
&= \sqrt{n} E_{\lambda_n} [ u_n \Psi_n(S_n)  \mid u_1, \dots, u_{q-1} \label{eq:price}]
\end{align}

The last equality follows from the fact that, conditionally to $u_1,\dots,u_{q-1}$, the vector $(u_q,S_n)$ and $(u_n,S_n)$ have the same law under $\lambda_n$. The price process $\overline{p}^n$ is written as a conditional expectation of a terminal variable with respect to an increasing sequence of $\sigma$-algebras. It is then a martingale under the probability $\lambda_n$. 
\end{proof}

We further aim to prove that $\lambda_n$ is the unique probability on $\Omega_n$ that makes the price process a martingale.

\begin{theorem}\label{thm:unique_equiv_measure}
 $\lambda_n$ is the unique probability on $\Omega_n$ that makes the price process $(\overline{p}_q)_{q=1,\dots,n}$ a martingale.
\end{theorem}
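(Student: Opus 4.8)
The plan is to show that the martingale condition, together with the constraint that every coordinate $u_q$ takes values in $\{-1,+1\}$, pins down the conditional law of each $u_q$ given the past, forcing the measure to coincide with $\lambda_n$ (under which the $u_q$ are i.i.d. and symmetric). Let $Q$ be any probability on $\Omega_n$ making the price process $(\overline{p}_q)_{q=1,\dots,n}$ a martingale. Recall from \eqref{equation_p_q} that the prices are defined through $\lambda_n$-conditional expectations of $X^\star = \Psi_n(S_n)$; the key structural fact I would exploit is that $\overline{p}_q$ is an \emph{affine} function of the single number $S_{q-1}:=\frac{1}{\sqrt n}\sum_{i<q}u_i$ carried forward, since $\overline{p}_q(u_1,\dots,u_{q-1})$ depends on the history only through the partial sum. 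More precisely, I would first establish that the martingale property forces, at every history $h_{q-1}$ of positive $Q$-probability, a single scalar equation relating the conditional $Q$-probability of $u_q=+1$ to the prices $\overline{p}_q$ and the future increments.

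First I would reduce the martingale condition $E_Q[\overline{p}_{q+1}\mid h_{q-1}]=\overline{p}_q$ (or rather the one-step condition $E_Q[\overline{p}_{q+1}\mid h_q]=\overline{p}_q$ appropriately indexed) to a relation of the form $E_Q[u_q\,(\text{future price increment})\mid h_{q-1}]=0$, using that $\overline{p}_q$ is $h_{q-1}$-measurable. The second step is the heart of the argument: I would show that the price increment $\overline{p}_{q+1}-\overline{p}_q$, as a function of $u_q\in\{+1,-1\}$, is \emph{strictly monotone} in $u_q$. This monotonicity comes from the strict convexity of $\Psi_n$ (convexity was proved in Proposition \ref{prop:X_is_convex}, and strictness can be extracted from the equilibrium conditions, since $\Psi_n=A^\sharp$ is a maximum of affine functions with distinct slopes $S_n(\omega')$). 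Concretely, the prices under $\lambda_n$ satisfy a backward recursion in the partial sum, and a strictly convex $\Psi_n$ makes the posted price genuinely respond to whether player 1 bought or sold. Given such strict monotonicity, a random variable $Z$ taking exactly two values with $E_Q[Z\mid h_{q-1}]=0$ and $Z$ strictly increasing in $u_q$ forces the conditional probabilities to be exactly those making $Z$ centered, which I would compare against the $\lambda_n$-values.

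The cleanest route, which I would prefer, is to argue directly by the symmetry built into $\lambda_n$: under $\lambda_n$ the conditional law of $u_q$ given the past assigns probability $1/2$ to each value, and the last step of the proof of Theorem \ref{price_is_mart} showed the increments are centered precisely because $(u_q,S_n)$ and $(u_n,S_n)$ are exchangeable under $\lambda_n$. For an arbitrary $Q$, I would proceed by backward induction on $q$ from $q=n$: at the last stage the two-point martingale condition on $\overline{p}_n$ (together with the known strict spread between its two possible continuation values) determines $Q(u_n=+1\mid h_{n-1})$ uniquely, and this unique value must be $1/2$; propagating this determination backward fixes every one-step conditional probability, whence $Q=\lambda_n$.

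The main obstacle I anticipate is establishing the required \emph{strictness}: the monotonicity/spread of the price increments must be strict, with no coincidental flat spot, for the two-point martingale equation to have a unique solution for the conditional probabilities. This hinges on verifying that $\Psi_n$ is strictly convex on the relevant lattice of values of $S_n$ (equivalently, that the prices $\overline{p}_q$ genuinely separate the two successor histories at every node), which I would derive from assumption \textbf{A2} ($H$ strictly convex) feeding through condition \textbf{(C4)} into the equilibrium, together with \textbf{A1} ensuring $\mu$ has full support so that no history is degenerate. Once strict separation at every node is in hand, the uniqueness follows mechanically from the backward induction sketched above.
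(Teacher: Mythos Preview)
Your plan is essentially the paper's own argument: reduce the martingale condition to $E_Q[u_{q-1}\mid u_1,\dots,u_{q-2}]=0$ via the fact that $\overline{p}_q$ depends affinely on $u_{q-1}$, and then invoke strict monotonicity of the price in $u_{q-1}$ to force the conditional law to be symmetric. The paper does this directly (no backward induction) by writing $\overline{p}_q=\overline{p}_{q-1}+c_q(u_1,\dots,u_{q-2})\,u_{q-1}$ with $c_q=\tfrac{1}{2}\bigl(\overline{p}_q(\dots,1)-\overline{p}_q(\dots,-1)\bigr)$ and appealing to a separate lemma (Lemma \ref{lemma:p_q_is_increasing}) that $c_q>0$.

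One point where your sketch diverges from the paper and is worth correcting: the strictness does \emph{not} come from \textbf{A2} through \textbf{(C4)}. The paper's argument for $c_q>0$ runs entirely through \textbf{(C3)} and the non-atomicity of $\mu$ from \textbf{A1}: if the function $\chi(x)=\tfrac{\sqrt n}{2}\bigl(\Psi_n(x+\tfrac{1}{\sqrt n})-\Psi_n(x-\tfrac{1}{\sqrt n})\bigr)$ were constant between two consecutive lattice points, then $\Psi_n'$ would be constant on an interval containing some admissible value $z$ of $S_n$; \textbf{(C3)} would then force $L=\Psi_n'(z)$ with positive probability, contradicting that $\mu$ has a density. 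Your parenthetical that ``$\Psi_n=A^\sharp$ is a maximum of affine functions with distinct slopes $S_n(\omega')$'' does not work as stated, since many $\omega'$ share the same $S_n(\omega')$ and this alone does not preclude flat pieces of $\Psi_n'$ on the lattice.
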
 

\begin{proof}
Indeed, let $\tilde{\lambda}_n$ be a probability on $\Omega_n$ under which the price process is a martingale. 

We find with the similar computation as that made to get equation (\ref{equation_p_q}) that:  

\[ \overline{p}_q(u_1,\dots,u_{q-1})=\frac{u_q+1}{2}\overline{p}_q(u_1,\dots,u_{q-2},1)+\frac{u_q-1}{2}\overline{p}_q(u_1,\dots,u_{q-2},-1)\]

Since $\overline{p}$ is a martingale under $\lambda_n$, we find $\frac{\overline{p}_q(u_1,\dots,1)+\overline{p}_q(u_1,\dots,-1)}{2}=\overline{p}_{q-1}(u_1,\dots,u_{q-2})$

And thus 

\[ \overline{p}_q(u_1,\dots,u_{q-1})=\overline{p}_{q-1}(u_1,\dots,u_{q-2})+c_q(u_1,\dots,u_{q-2})u_{q-1}
\]
where $c_q=\frac{\overline{p}_q(u_1,\dots,1)-\overline{p}_q(u_1,\dots,-1)}{2}$. Lemma \ref{lemma:p_q_is_increasing} proved in Annex \ref{annexe_6}, indicates that $c_q>0$.

So if $\overline{p}$ is a martingale under $\tilde{\lambda}_n$, we must have for all $q$: $E_{\tilde{\lambda}_n}[u_{q-1}|u_1,\dots,u_{q-2}]=0$. Therefore $\tilde{\lambda}_n=\lambda_n$.
\end{proof}

\section{Existence of equilibrium}
\label{section_existence}

In this section we aim to prove the existence of an equilibrium in $G_n(\mu)$. According to section \ref{section_reduced_eq}, we can focus on the game $\overline{G}_n(\mu)$. According to Corollary \ref{Cor:C1234} we just have to prove the existence of a pair $(\Pi_n,\Psi_n)$ such that conditions \textbf{(C1)} to \textbf{(C4)} are satisfied.

These conditions on $(\Pi_n,\Psi_n)$ lead to new conditions on $(\overline{\Pi}_n ,\Psi_n)$ where $\overline{\Pi}_n \in \Delta(\mathbb{R}^2)$ is the marginal  of $\Pi_n$ on $(L,S_n)$. As explained in the next subsection, there corresponds  an equilibrium  $(\Pi_n,\Psi_n)$ to a pair $(\overline{\Pi}_n ,\Psi_n)$ satisfying these new conditions.
We therefore will focus on these pairs $(\overline{\Pi}_n ,\Psi_n)$.

\subsection{The marginal $\overline{\Pi}_n$ }

$\Pi_n$ is a probability on $\Omega_n \times \mathbb{R}$ and it induces a marginal law $\overline{\Pi}_n \in \Delta(\mathbb{R}^2)$ for the pair $(S_n,L)$. \textbf{(C1)}, \textbf{(C2)} and \textbf{(C3)} are in fact conditions on $(\overline{\Pi}_n,\Psi_n)$. \textbf{(C4)} is the unique condition that involves the conditional law of $L$ given $\omega$. As proved with the first claim of the forthcoming Lemma \ref{lemma:C4prime}, it turns out that \textbf{(C4)} implies the following necessary condition on $\overline{\Pi}_n$ and $\Psi_n$:

\[ \textbf{(C4'):} ~~  \text{There exists a constant $\alpha_n$ such that }\frac{\partial\overline{\lambda}_n}{\partial\overline{\Pi}_{n | S}} = \alpha_n E_{\overline{\Pi}_n}[ H'(LS_n-\Psi_n(S_n)) | S_n  ]\]



It is useful to note that various equilibria $(\Pi_n,\Psi_n)$ could have the same marginal $\overline{\Pi}_n$. On the other hand, we will prove in Corollary \ref{from_nu_to_equilibrium} the existence of pairs $(\overline{\Pi}_n,\Psi_n)$ that satisfy \textbf{(C1)}, \textbf{(C2)}, \textbf{(C3)} and \textbf{(C4')}. To prove the existence of reduced equilibrium in $G_n(\mu)$ we therefore need the second claim of the next lemma:

\begin{lemma}\label{lemma:C4prime}~~\\
1/ Any reduced equilibrium $(\Pi_n,\Psi_n)$ in $G_n(\mu)$ is such that $(\overline{\Pi}_n,\Psi_n)$ satisfies \textbf{(C1)}, \textbf{(C2)}, \textbf{(C3)} and \textbf{(C4')}, where $\overline{\Pi}_n=\Pi_{n|(L,S_n)}$.\\ 2/ Conversely, to any $(\overline{\Pi}_n,\Psi_n)$ satisfying \textbf{(C1)}, \textbf{(C2)}, \textbf{(C3)} and \textbf{(C4')}, there corresponds at least one equilibrium $(\Pi_n,\Psi_n)$ such that $\Pi_{n|(L,S_n)}=\overline{\Pi}_n$.
\end{lemma}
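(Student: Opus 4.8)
The plan is to treat the two claims separately; in both the engine is the elementary change-of-variables identity that if $\frac{dP}{dQ}=\phi$ on $\Omega_n$ and $T$ is measurable, then the pushforward densities satisfy $\frac{d(T_\ast P)}{d(T_\ast Q)}(s)=E_Q[\phi\mid T=s]$, which I apply with $T=S_n$, together with the tower property and the observation that $H'(LS_n-\Psi_n(S_n))$ depends on $\omega$ only through the pair $(L,S_n)$.

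For claim 1, first note that \textbf{(C1)} involves only $\Psi_n$ and $\lambda_n$ and is therefore untouched by passing to the marginal; \textbf{(C2)} holds because the $L$-marginal of $\overline{\Pi}_n=\Pi_{n|(L,S_n)}$ coincides with that of $\Pi_n$; and \textbf{(C3)} holds because the event $\{L\in\partial\Psi_n(S_n)\}$ is measurable with respect to $(L,S_n)$, so $\overline{\Pi}_n(L\in\partial\Psi_n(S_n))=\Pi_n(L\in\partial\Psi_n(S_n))=1$. It remains to deduce \textbf{(C4')} from \textbf{(C4)}. Writing $\phi(\omega):=\alpha_n E_{\Pi_n}[H'(LS_n-\Psi_n(S_n))\mid\omega]$, condition \textbf{(C4)} reads $\frac{\partial\lambda_n}{\partial\Pi_{n|\omega}}=\phi$. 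Pushing forward by $S_n$ (whose laws under $\lambda_n$ and $\Pi_{n|\omega}$ are exactly $\overline{\lambda}_n$ and $\nu_n=\overline{\Pi}_{n|S}$) gives
\[
\frac{\partial\overline{\lambda}_n}{\partial\overline{\Pi}_{n|S}}(s)=E_{\Pi_{n|\omega}}[\phi\mid S_n=s]=\alpha_n\,E_{\Pi_n}\big[\,E_{\Pi_n}[H'(LS_n-\Psi_n(S_n))\mid\omega]\mid S_n=s\,\big].
\]
Since $\sigma(S_n)\subseteq\sigma(\omega)$, the tower property collapses the double conditioning to $E_{\Pi_n}[H'(LS_n-\Psi_n(S_n))\mid S_n=s]$, and as the integrand depends only on $(L,S_n)$ this equals $E_{\overline{\Pi}_n}[H'(LS_n-\Psi_n(S_n))\mid S_n=s]$, which is exactly \textbf{(C4')}.

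For claim 2, I would exhibit an explicit lift of $\overline{\Pi}_n$. Define $\Pi_n\in\Delta(\Omega_n\times\mathbb{R})$ by first drawing $(L,S_n)\sim\overline{\Pi}_n$ and then drawing $\omega$ uniformly on the fibre $\{\omega':S_n(\omega')=S_n\}$, independently of $L$ given $S_n$; equivalently $L-S_n-\omega$ is a Markov chain and the conditional law of $\omega$ given $S_n$ is uniform on its fibre. By construction $\Pi_{n|(L,S_n)}=\overline{\Pi}_n$, so \textbf{(C1)}, \textbf{(C2)}, \textbf{(C3)} hold verbatim as in claim 1. The crucial point is that the conditional law of $\omega$ given $S_n$ is uniform on each fibre under $\lambda_n$ as well (since $\lambda_n$ is uniform on $\Omega_n$); hence the two conditional laws agree and, $\Omega_n$ being finite, the Radon--Nikodym derivative factors through $S_n$:
\[
\frac{\partial\lambda_n}{\partial\Pi_{n|\omega}}(\omega)=\frac{\partial\overline{\lambda}_n}{\partial\overline{\Pi}_{n|S}}(S_n(\omega)).
\]
Applying \textbf{(C4')} to the right-hand side and then using the Markov property (which gives $E_{\Pi_n}[\,\cdot\mid\omega]=E_{\overline{\Pi}_n}[\,\cdot\mid S_n(\omega)]$ for any function of $(L,S_n)$) yields
\[
\frac{\partial\lambda_n}{\partial\Pi_{n|\omega}}(\omega)=\alpha_n\,E_{\overline{\Pi}_n}[H'(LS_n-\Psi_n(S_n))\mid S_n(\omega)]=\alpha_n\,E_{\Pi_n}[H'(LS_n-\Psi_n(S_n))\mid\omega],
\]
which is precisely \textbf{(C4)}. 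Thus $(\Pi_n,\Psi_n)$ satisfies \textbf{(C1)}--\textbf{(C4)} with $\Psi_n$ convex, hence is an equilibrium by Corollary \ref{Cor:C1234}.

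The routine parts are the transfer of \textbf{(C1)}--\textbf{(C3)} and the pushforward-density computation; the one genuinely load-bearing idea is the choice of the uniform-on-fibres lift in claim 2. Its whole purpose is to force the conditional law of $\omega$ given $S_n$ to coincide with that under $\lambda_n$, so that the density between $\lambda_n$ and $\Pi_{n|\omega}$ depends on $\omega$ only through $S_n$; combined with the Markov structure $L-S_n-\omega$ this is exactly what upgrades \textbf{(C4')} back to \textbf{(C4)}. I would also record, using $\epsilon<H'<K$ from \textbf{A2}, that the density in \textbf{(C4')} is bounded above and below by positive constants, so $\overline{\lambda}_n$ and $\nu_n$ are equivalent; hence $\nu_n$ charges every value of $S_n$, the lift is well defined, and the resulting $\Pi_n$ is completely mixed, in agreement with Proposition \ref{prop:allhistory}.
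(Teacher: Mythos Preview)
Your proof is correct and follows essentially the same approach as the paper: for claim~1 you derive \textbf{(C4')} from \textbf{(C4)} via a pushforward/tower argument (the paper does the equivalent computation by testing against bounded functions $\Phi(S_n)$), and for claim~2 you use exactly the paper's uniform-on-fibres lift, with the Markov property $L\!-\!S_n\!-\!\omega$ doing the work. Your explicit remark that the density factors through $S_n$ because the fibre conditionals agree, and your check via \textbf{A2} that $\nu_n$ charges every value of $S_n$, make steps precise that the paper leaves implicit.
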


\begin{proof}
We start with the first claim. We just have to prove that \textbf{(C4)} implies \textbf{(C4')}. Let $\Phi$ be a continuous and bounded function. According to \textbf{(C3)} we have:

\begin{align*}
E_{\lambda_n}[\Phi(S_n(\omega))]=&E_{\Pi_{n|\omega}}[\Phi(S_n(\omega))\frac{d\lambda_n}{d\Pi_{n|\omega}}]\\
=&E_{\Pi_n}[\Phi(S_n(\omega))\alpha_n E_{\Pi_n}[H'(LS_n-\Psi_n(S_n))|\omega ] ]\\
=&E_{\Pi_n}[\Phi(S_n(\omega))\alpha_n H'(LS_n-\Psi_n(S_n)) ]\\
=&E_{\Pi_n}[\Phi(S_n(\omega))\alpha_n E_{\Pi_n}[H'(LS_n-\Psi_n(S_n))|S_n ] ]\\
\end{align*}
Therefore $E_{\overline{\lambda}_n}[\Phi(S_n)]=E_{\lambda_n}[\Phi(S_n(\omega))]=E_{\overline{\Pi}_{n|S}}[\Phi(S_n(\omega))\alpha_n E_{\overline{\Pi}_n}[H'(LS_n-\Psi_n(S_n)|S_n] ]$ 
which is exactly our condition \textbf{(C4')}.

We now prove the second claim. Let $(\overline{\Pi}_n,\Psi_n)$ satisfy \textbf{(C1)}, \textbf{(C2)}, \textbf{(C3)} and \textbf{(C4')}. Consider then the probability $\Pi_n$ induced by the following lottery: select first $L$ and $S_n$ according to $\overline{\Pi}_n$. If $S_n=s$, select an history $\omega$ with the uniform probability on the set $K_s=\{ \omega | S_n(\omega)=s \}$. 

The marginal of $\Pi_n$ on $(L,S_n)$ coincides with $\overline{\Pi}_n$ and $(\Pi_n,\Psi_n)$ satisfies therefore \textbf{(C1)}, \textbf{(C2)} and \textbf{(C3)}.

Observe then that under $\Pi_n$, $L$ is then independent of $\omega$ given $S_n$ and therefore the conditional law of $(L,S_n)$ given $\omega$ coincides with  the conditional law of $(L,S_n)$ given $S_n$. So: $E_{\Pi_n}[H'(LS_n-\Psi_n(S_n))| \omega]=E_{\overline{\Pi}_n}[H'(LS_n-\Psi_n(S_n))| S_n]$, and \textbf{(C4)} then follows from \textbf{(C4')}.
\end{proof}

\subsection{Reformulation of \textbf{(C1)}, \textbf{(C2)} and \textbf{(C3)}}\label{subsection_reformulation}
In this subsection we show that a pair $(\overline{\Pi}_n,\Psi_n)$ satisfying \textbf{(C1)}, \textbf{(C2)} and \textbf{(C3)} is completely determined by the marginal law $\nu:=\overline{\Pi}_{n | S_n}$ of $S_n$.

It will be convenient to introduce the following notation: $\Delta(\mathbb{R}^2,\mu,\nu)$ is the set of probability distributions on $(L,S_n) \in \mathbb{R}^2$ with respective marginal laws $\mu$ and $\nu$.

\begin{definition}
For $\nu \in \Delta(\mathbb{R})$, we define $\phi_{\nu}(\ell):=F_{\nu}^{-1} (F_{\mu} (\ell))$ and $\gamma_{\nu}(s):=F_{\mu}^{-1} (F_{\nu} (s))$ where $F_{\mu}$ and $F_{\nu}$ are the cumulative distribution functions of $\mu$ and $\nu$, and $F_{\mu}^{-1}$  and $F_{\nu}^{-1}$ are their right inverses i.e. $F_{\nu}^{-1}(y)= \inf\{ x \mid F_{\nu}(x)> y\}$.

We further define:
\begin{equation}\label{def:Gamma}
\Gamma_{\nu}(s):=\int_0^s \gamma_{\nu}(t)dt
\end{equation}

\begin{equation}\label{def:Phi}
\Phi_{\nu}(\ell):=\int_0^{\ell} \phi_{\nu}(t)dt
\end{equation}   

We denote $\overline{\Pi}_{\nu}$  the law of the pair $(L,\phi_{\nu}(L))$ when $L$ is $\mu$-distributed.

Finally, for $\lambda \in \Delta(\mathbb{R})$, we set: \begin{equation}\label{def:Psi}
\Psi_{\nu,\lambda}(s):=\Gamma_{\nu}(s)-E_{\lambda}[\Gamma_{\nu}]
\end{equation}
\end{definition}


\begin{lemma}\label{lemma:nu}
Let $(\overline{\Pi},\Psi)$ be a pair where $\Psi$ is a convex function and where $\overline{\Pi} \in \Delta(\mathbb{R}^2)$  satisfies $\overline{\Pi}_{|S_n}=\nu$.  Then $(\overline{\Pi},\Psi)$ satisfies \textbf{(C1)}, \textbf{(C2)} and \textbf{(C3)} if and only if $(\overline{\Pi},\Psi)=(\overline{\Pi}_{\nu},\Psi_{\nu,\overline{\lambda}_n})$.
\end{lemma}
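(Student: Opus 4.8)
The plan is to prove both directions of the equivalence, exploiting the fact that the three conditions \textbf{(C1)}, \textbf{(C2)}, \textbf{(C3)} decouple naturally: \textbf{(C2)} and \textbf{(C3)} pin down $\overline{\Pi}$ and determine $\Psi$ up to an additive constant, while \textbf{(C1)} fixes that constant. First I would establish the heuristic made rigorous in the text: condition \textbf{(C3)} says $\overline{\Pi}(L \in \partial \Psi(S_n))=1$, and by the Fenchel equivalence (\ref{eq:fenchel}) this is the same as $\overline{\Pi}(S_n \in \partial \Psi^{\sharp}(L))=1$. Since $\Psi$ is convex with $\Psi = \Gamma_{\nu} - \text{const}$, its Fenchel transform satisfies $\partial \Psi^{\sharp} = \gamma_{\nu}$ (as a correspondence, single-valued a.e.), and $\gamma_{\nu} = F_{\mu}^{-1} \circ F_{\nu}$ is precisely the monotone rearrangement pushing $\mu$ forward onto $\nu$. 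The key point is that the monotone (increasing) coupling of two one-dimensional measures with prescribed marginals $\mu$ and $\nu$ is unique: there is essentially one increasing map $g$ with $g(L) \sim \nu$ when $L \sim \mu$, namely $g = \phi_{\nu} = F_{\nu}^{-1} \circ F_{\mu}$.

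The forward direction ($(\overline{\Pi},\Psi)=(\overline{\Pi}_{\nu},\Psi_{\nu,\overline{\lambda}_n})$ implies the three conditions) is the routine verification: I would check that $\overline{\Pi}_{\nu}$, being the law of $(L,\phi_{\nu}(L))$ with $L \sim \mu$, has first marginal $\mu$ (giving \textbf{(C2)}) and second marginal $\nu$ by construction of $\phi_{\nu}$; that $\Psi_{\nu,\overline{\lambda}_n} = \Gamma_{\nu} - E_{\overline{\lambda}_n}[\Gamma_{\nu}]$ is convex since $\Gamma_{\nu}' = \gamma_{\nu}$ is nondecreasing; that \textbf{(C1)} holds because the additive constant $E_{\overline{\lambda}_n}[\Gamma_{\nu}]$ is subtracted off precisely so that $E_{\overline{\lambda}_n}[\Psi_{\nu,\overline{\lambda}_n}(S_n)]=0$; and finally that \textbf{(C3)} holds, i.e. $L \in \partial \Psi_{\nu,\overline{\lambda}_n}(S_n)$ holds $\overline{\Pi}_{\nu}$-a.s. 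For the last point I would use that under $\overline{\Pi}_{\nu}$ we have $S_n = \phi_{\nu}(L)$, and that $\phi_{\nu}$ and $\gamma_{\nu}$ are mutually inverse monotone rearrangements, so $\ell \in \partial \Psi(\phi_{\nu}(\ell))$ follows from the Fenchel characterization $s \in \partial \Psi^{\sharp}(\ell) = \gamma_{\nu}(\ell)$ applied with $s = \phi_{\nu}(\ell)$.

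The converse direction is where the real content lies. Given any $(\overline{\Pi},\Psi)$ with $\Psi$ convex, $\overline{\Pi}_{|S_n}=\nu$, satisfying \textbf{(C1)}, \textbf{(C2)}, \textbf{(C3)}, I must show it coincides with the canonical pair. From \textbf{(C3)} and Fenchel, $\overline{\Pi}$ is supported on the graph-like set $\{(\ell,s): s \in \partial\Psi^{\sharp}(\ell)\}$. The main obstacle is handling the fact that $\partial\Psi^{\sharp}$ is a correspondence, not a genuine function, so I cannot immediately write $S_n$ as a single-valued increasing function of $L$. I would argue that $\partial\Psi^{\sharp}$ is single-valued except on an at most countable set of $\ell$ (the points where $\Psi^{\sharp}$ is not differentiable), and since $\mu$ is atomless by assumption \textbf{A1}, this exceptional set is $\mu$-null; hence $\overline{\Pi}$-almost surely $S_n$ equals the increasing function $\ell \mapsto (\Psi^{\sharp})'(\ell)$ of $L$. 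This exhibits $\overline{\Pi}$ as an increasing (monotone) coupling of $\mu$ and $\nu$; by uniqueness of the monotone coupling in dimension one, $\overline{\Pi}=\overline{\Pi}_{\nu}$ and the defining map must be $\phi_{\nu}$. It follows that $(\Psi^{\sharp})' = \phi_{\nu}$ $\mu$-a.e., whence $\Psi^{\sharp} = \Phi_{\nu} + \text{const}$, and taking Fenchel transforms back gives $\Psi = \Gamma_{\nu} + \text{const}$ up to the additive constant; condition \textbf{(C1)} then forces that constant to equal $-E_{\overline{\lambda}_n}[\Gamma_{\nu}]$, so $\Psi = \Psi_{\nu,\overline{\lambda}_n}$, completing the proof. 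Throughout, the absolute continuity and strict positivity of $f_{\mu}$ from \textbf{A1} are what guarantee $F_{\mu}$ is a continuous strictly increasing bijection, legitimizing the inverse-function manipulations and ensuring the exceptional sets are null.
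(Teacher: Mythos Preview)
Your approach is essentially the same as the paper's: both directions hinge on the Fenchel duality between $\partial\Psi$ and $\partial\Psi^\sharp$, the observation that an increasing correspondence is single-valued off a countable set which is $\mu$-null by \textbf{A1}, the uniqueness of the increasing map pushing $\mu$ onto $\nu$, and \textbf{(C1)} to pin down the additive constant. Two minor points: in your forward-direction sketch you write $\partial\Psi^\sharp=\gamma_\nu$, but since $\Psi'=\gamma_\nu$ it is $\partial\Psi^\sharp$ that equals (roughly) $\phi_\nu$, not $\gamma_\nu$; and the paper is a bit more careful than ``$\phi_\nu$ and $\gamma_\nu$ are mutually inverse'' when verifying \textbf{(C3)}, proving the sandwich $F_\nu((F_\nu^{-1}(x))^-)\le x\le F_\nu(F_\nu^{-1}(x))$ to handle possible atoms of $\nu$.
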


\begin{proof}
We first prove that the pair $(\overline{\Pi}_{\nu},\Psi_{\nu,\overline{\lambda}_n})$ satisfies \textbf{(C1)}, \textbf{(C2)} and \textbf{(C3)}. We start by observing that $\overline{\Pi}_{\nu} \in \Delta(\mathbb{R}^2,\mu,\nu)$. Indeed, according to the definition of $\overline{\Pi}_{\nu}$ the marginal law of $L$ is $\mu$. On the other hand, since $\mu$ has no atom, $U:=F_{\mu}(L)$ is uniformly distributed and it is well known that the Smirnov transform $F_{\nu}^{-1}(U)$ is $\nu$-distributed. Therefore the marginal law of $\overline{\Pi}_{\nu}$ on $S_n$ is just $\nu$. 

$\Psi_{\nu,\lambda}$ is a convex function since $\gamma_{\nu}$ is increasing and thus $\Gamma_{\nu}$ is convex. It further satisfies \textbf{(C1)} since, due to equation (\ref{def:Psi}), $E_{\overline{\lambda}_n}[\Psi_{\nu,\overline{\lambda}_n}]=0$. 

$\overline{\Pi}_{\nu}$ satisfies \textbf{(C2)} since it belongs to $\Delta(\mathbb{R}^2,\mu,\nu)$.

$\gamma_{\nu}$ is right continuous and therefore it follows from the definition of $\Gamma_{\nu}$ that $\partial \Psi_{\nu,\lambda}(s)=[\gamma_{\nu}(s^-),\gamma_{\nu}(s)]$ where $\gamma_{\nu}(s^-)$ is the left limit of $\gamma_{\nu}$ at $s$. Under $\overline{\Pi}_{\nu}$, $S_n=\phi_{\nu}(L)$. Therefore, condition \textbf{(C3)} is equivalent to:

\begin{equation}\label{eq:C4_equiv}
\overline{\Pi}_{\nu}\left[\gamma_{\nu}((\phi_{\nu}(L))^-)\leq L \leq \gamma_{\nu}(\phi_{\nu}(L))\right]=1
\end{equation}

We first prove that for all $x$:

\begin{equation}\label{eq:F_nu F_nu^-1}
F_{\nu}((F_{\nu}^{-1}(x))^-)       \leq x \leq F_{\nu}(F_{\nu}^{-1}(x))
\end{equation} 

Let $A:=\{ s | F_{\nu}(s)>x \}$ and $\alpha:=F_{\nu}^{-1}(x)$. It results from the definition of $F_{\nu}^{-1}(x)$ that $\alpha$ is the infimum of $A$. Furthermore, since $F_{\nu}$ is increasing, $]\alpha,\infty[ \subset A \subset [\alpha,\infty[$. Since $F_{\nu}$ is right continuous, we get $F_{\nu}(\alpha)=\displaystyle \inf_{s \in A} F_{\nu}(s)$. But if $s \in A$, $F_{\nu}(s)>x$, and therefore $F_{\nu}(\alpha)=\displaystyle \inf_{s \in A} F_{\nu}(s) \geq x$ and the right hand inequality in (\ref{eq:F_nu F_nu^-1}) is proved.

On the other hand, $F_{\nu}(\alpha^{-})=\lim_{u \rightarrow \alpha, u<\alpha}F_{\nu}(u)$. But if $u < \alpha$, $u \in A^c$ and thus $F_{\nu}(u) \leq x$. Therefore $F_{\nu}(\alpha^-)\leq x$ which is the second inequality. Replace $x$ by $F_{\mu}(L)$ in (\ref{eq:F_nu F_nu^-1}) to obtain: $F_{\nu}((\phi_{\nu}(L))^-)\leq F_{\mu}(L) \leq F_{\nu}(\phi_{\nu}(L))$. Since $F_{\mu}$ is increasing and one to one, we get therefore $F_{\mu}^{-1}(F_{\nu}((\phi_{\nu}(L))^-))\leq L \leq F_{\mu}^{-1}(F_{\nu}(\phi_{\nu}(L)))$ which is exactly (\ref{eq:C4_equiv}) according to the definition of $\gamma_{\nu}$, and $(\overline{\Pi}_{\nu},\Psi_{\nu,\lambda})$ satisfies thus \textbf{(C3)}.

~~\\
We now prove the converse statement. Let $\overline{\Pi}_n$ belong to $\Delta(\mathbb{R}^2,\mu,\nu)$ and $\Psi_n$ be a convex function such that $(\overline{\Pi}_n,\Psi_n)$ satisfies \textbf{(C1)}, \textbf{(C2)} and \textbf{(C3)}. We have to prove that $(\overline{\Pi},\Psi)=(\overline{\Pi}_{\nu},\Psi_{\nu,\overline{\lambda}_n})$

Being convex, the function $\Psi_n(s)$ has a derivative $\rho(s)$ at any point except maybe on a countable set. The function $\rho$ can be extended into a right continuous function defined for all $s \in \mathbb{R}$.
We then obtain that for all $s$, $\partial \Psi_n(s)=[\rho(s^-),\rho(s)]$. Since $\ell \in \partial\Psi_n(s) \Leftrightarrow s \in \partial\Psi_n^{\sharp}(\ell)$ we get, according to Fenchel equation (\ref{eq:fenchel}):  $\partial\Psi_n^{\sharp}(\ell)=[\rho^{-1}(\ell^-),\rho^{-1}(\ell)]$ where $\rho^{-1}(\ell):=\inf \{ s | \rho(s)> \ell \}$.

Condition \textbf{(C3)} implies therefore $\overline{\Pi}_n(\rho^{-1}(L^-)\leq S_n \leq \rho^{-1}(L))=1$. Observing that $\rho^{-1}$ is an increasing function, there are at most countably many points in $A:=\{ \ell | \rho^{-1}(\ell^-) \neq \rho^{-1}(\ell) \}$. Since $\mu$ is non atomic, $\mu(A)=0$ and thus $\overline{\Pi}_n[S_n=\rho^{-1}(L)]=1$. It follows that, under $\overline{\Pi}_n$, $(L,S_n)$ has the same law as $(L,\rho^{-1}(L))$. Since $\overline{\Pi}_n \in \Delta(\mathbb{R}^2,\mu,\nu)$, we conclude that $\rho^{-1}(L)$ is $\nu$-distributed when $L$ is $\mu$-distributed. As observed in the beginning of this proof $\phi_{\nu}(L)\sim \nu$ when $L \sim \mu$.  It turns out that $\phi_{\nu}$ is the unique right continuous increasing function having that property\footnote{Let indeed $f_1$, $f_2$ be two right continuous increasing functions such that $f_i(L) \sim \nu$ when $L \sim \mu$. Then for all $a \in \mathbb{R}$, $A_i:=\{ \ell | f_i(\ell) \geq a \}$ is a closed set. Since $f_i$ is increasing, $A_i$ must be an half line and we must have therefore $A_i=[\alpha_i,\infty[$. Since $f_i(L) \sim \nu$ and $F_{\mu}$ is continuous, we get:
\[ \nu([a,\infty[) = \mu(f_i(L) \geq a) = \mu(L \geq \alpha_i) = 1 -F_{\mu}(\alpha_i)
\] Therefore $F_{\mu}(\alpha_1)=F_{\mu}(\alpha_2)$ and thus $\alpha_1=\alpha_2$, since $F_{\mu}$ is strictly increasing according to the hypothesis \textbf{A1} on $\mu$. As a result, $A_1=A_2$, or in other words: for all $\ell$ and for all $a$, $f_1(\ell) \geq a$ if and only if $f_2(\ell)\geq a$. We conclude therefore that $f_1=f_2$.}, and we may therefore conclude that $\rho^{-1}=\phi_{\nu}$.

It follows on one hand that $\overline{\Pi}_n=\overline{\Pi}_{\nu}$. On the other hand, $\rho=\phi_{\nu}^{-1}=\gamma_{\nu}$. Therefore, $\partial\Psi_n(s)=\partial\Gamma_{\nu}(s)$ for all $s$. As a consequence $\Psi_n=\Gamma_{\nu}+c$ where $c$ is a constant. Since $\Psi_n$ satisfies (\textbf{C1}), we conclude that $c= -E_{\overline{\lambda}_n}[\Gamma_{\nu}]$ and thus $\Psi_n=\Psi_{\nu,\overline{\lambda}_n}$ as announced.\end{proof}

As explained in the introduction of this section, we are seeking for pairs $(\overline{\Pi}_n,\Psi_n)$ satisfying \textbf{(C1)}, \textbf{(C2)}, \textbf{(C3)} and \textbf{(C4')}. According to Lemma \ref{lemma:nu}, this is equivalent to find $\nu$ such that $(\overline{\Pi}_{\nu},\Psi_{\nu,\overline{\lambda}_n})$ satisfies \textbf{(C4')}.

 \textbf{(C4')} is a condition on the density of $\overline{\lambda}_n$ with respect to the marginal of $\Pi_{\nu | S_n}=\nu$. It expresses that this density $\frac{\partial \overline{\lambda}_n}{\partial \nu}$ is proportional to $Y_{\nu,\lambda}$ defined as:

\begin{equation}\label{def:Y_n}
Y_{\nu,\lambda}:= E_{\overline{\Pi}_{\nu}} [ H'(LS_n- \Psi_{\nu,\lambda}(S_n)) \mid S_n]
\end{equation}

Since $H'$ is strictly positive, so is $Y_{\nu,\lambda}$. Therefore we define:

\begin{equation}\label{def:alphanulambda}
\alpha_{\nu,\lambda}:=\frac{1}{ E_{\nu}[Y_{\nu,\lambda}]}
\end{equation}

as the unique constant $\alpha_{\nu,\lambda}$ such that $\alpha_{\nu,\lambda} . Y_{\nu,\lambda} . \nu$ is a probability measure (the notation $\alpha_{\nu,\lambda} . Y_{\nu,\lambda} . \nu$ refers to the measure $\zeta$ such that $\frac{\partial \zeta}{\partial \nu} = \alpha_{\nu,\lambda} . Y_{\nu,\lambda}$ ).
 
\begin{definition}\label{def:operator_T}

For $\lambda \in \Delta(\mathbb{R})$, $T_{\lambda}$ is defined as the map from $\nu \in \Delta(\mathbb{R})$ to $T_{\lambda}(\nu):=\alpha_{\nu,\lambda}. Y_{\nu,\lambda}. \nu \in \Delta(\mathbb{R})$, where
$\alpha_{\nu,\lambda}$ and $Y_{\nu,\lambda}$ are defined in equations (\ref{def:Y_n}) and (\ref{def:alphanulambda}).
\end{definition} 
 
With this definition, we get:
 
\begin{lemma}\label{lemma:C4'}
For all $\nu$, the pair $(\overline{\Pi}_{\nu},\Psi_{\nu,\overline{\lambda}_n})$ satisfies \textbf{(C4')} if and only if $T_{\overline{\lambda}_n}(\nu)=\overline{\lambda}_n$.
\end{lemma}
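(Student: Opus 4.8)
The plan is to observe that, once the specific pair $(\overline{\Pi}_{\nu},\Psi_{\nu,\overline{\lambda}_n})$ is substituted into \textbf{(C4')}, both sides of the claimed equivalence reduce to the single assertion that $\overline{\lambda}_n$ admits a density proportional to $Y_{\nu,\overline{\lambda}_n}$ with respect to $\nu$. The entire content of the lemma is then bookkeeping of the proportionality constant, which is forced by the fact that $\overline{\lambda}_n$ and $T_{\overline{\lambda}_n}(\nu)$ are both probability measures.

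First I would unwind \textbf{(C4')} for this pair. Since $\overline{\Pi}_{\nu}$ is by definition the law of $(L,\phi_{\nu}(L))$ with $L\sim\mu$, its $S_n$-marginal is $\nu$, so $\overline{\Pi}_{\nu\,|\,S}=\nu$; and the conditional expectation appearing on the right of \textbf{(C4')} is exactly $Y_{\nu,\overline{\lambda}_n}$ as introduced in equation (\ref{def:Y_n}). Thus \textbf{(C4')} for $(\overline{\Pi}_{\nu},\Psi_{\nu,\overline{\lambda}_n})$ says precisely: there exists a constant $\alpha_n$ with
\[
\frac{\partial\overline{\lambda}_n}{\partial\nu}=\alpha_n\, Y_{\nu,\overline{\lambda}_n}.
\]

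For the forward implication, I would assume such an $\alpha_n$ exists and integrate this identity against $\nu$. Because $\overline{\lambda}_n$ is a probability measure the left-hand side integrates to $1$, so $\alpha_n\,E_{\nu}[Y_{\nu,\overline{\lambda}_n}]=1$; since $H'>0$ forces $Y_{\nu,\overline{\lambda}_n}>0$ and hence $E_{\nu}[Y_{\nu,\overline{\lambda}_n}]>0$, this pins down $\alpha_n=\alpha_{\nu,\overline{\lambda}_n}$ uniquely, via equation (\ref{def:alphanulambda}). Then $\overline{\lambda}_n=\alpha_{\nu,\overline{\lambda}_n}\,Y_{\nu,\overline{\lambda}_n}\cdot\nu=T_{\overline{\lambda}_n}(\nu)$ by Definition \ref{def:operator_T}. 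Conversely, if $T_{\overline{\lambda}_n}(\nu)=\overline{\lambda}_n$, then by that same definition $\overline{\lambda}_n$ has density $\alpha_{\nu,\overline{\lambda}_n}\,Y_{\nu,\overline{\lambda}_n}$ with respect to $\nu$, which is exactly \textbf{(C4')} with the explicit constant $\alpha_n=\alpha_{\nu,\overline{\lambda}_n}$.

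There is no real obstacle here; the proof is a direct unwinding of the definitions. The only point requiring a word of care is the uniqueness of the normalization constant, i.e.\ that the positivity of $H'$ (hypothesis \textbf{A2}) guarantees $E_{\nu}[Y_{\nu,\overline{\lambda}_n}]>0$ so that $\alpha_{\nu,\overline{\lambda}_n}$ is well defined and the two measures, being both probabilities proportional to $Y_{\nu,\overline{\lambda}_n}\cdot\nu$, must coincide.
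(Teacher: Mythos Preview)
Your proposal is correct and follows the same approach as the paper, which simply states that the lemma ``results from the definition of $T_{\lambda}$ and the condition \textbf{(C4')}.'' Your write-up is more detailed than the paper's one-line proof, explicitly spelling out why $\overline{\Pi}_{\nu\,|\,S}=\nu$, why the conditional expectation is $Y_{\nu,\overline{\lambda}_n}$, and why the normalization constant is forced, but the underlying argument is identical.
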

 
\begin{proof}
This results from the definition of $T_{\lambda}$ and the condition \textbf{(C4')}.
\end{proof}

The operator $T_{\lambda}$ is the central tool of our analysis. It is used both to prove the existence of equilibria and to prove their convergence.

In the next subsection, we analyze the continuity property of $T_{\lambda}$.


\subsection{Continuity of $T_{\lambda}$}


We first introduce the Wasserstein distance $W_2$ and we remind some of its useful properties. Our the continuity result for $T_{\lambda}$ is stated in Proposition \ref{cor:continuity}. Its technical proof is given in Annex \ref{continuity_annexe}.


\begin{definition}
For $d \in \mathbb{N}^*$, we define $P_2(\mathbb{R}^d)$ the Wasserstein space of order $2$ on $\mathbb{R}^d$, as:

\[ P_2(\mathbb{R}^d):= \{  \nu \in  \Delta( \mathbb{R}^d), \text{ such that } \int_{\mathbb{R}^d} \| x \| ^ 2 \nu(dx) < \infty   \} \]

For $\nu_1,\nu_2 \in P_2(\mathbb{R}^d)$ we define the Wasserstein distance between $\nu_1$ and $\nu_2$ as:

\[ W_2(\nu_1,\nu_2)= \left(  \underset{\pi \in \Delta(\mathbb{R}^{2d},\nu_1,\nu_2)}{\inf} \int_{\mathbb{R}^{2d}}  \| x - y \| ^ 2 d\pi(x,y)   \right)^{\frac{1}{2}}
\]
\end{definition}

$W_2$ is clearly finite on $P_2(\mathbb{R}^d)$ and $(P_2(\mathbb{R}^d),W_2)$ is a metric space. This metric is useful to deal with weak convergences (as indicates Proposition \ref{prop:W_2_for_weak}).\\

 Remember that a sequence $\nu_k \in \Delta(\mathbb{R}^d)$ is said to converge in law, or to convergence weakly in $\Delta(\mathbb{R}^d)$ to $\nu$ if and only if for any bounded continuous function $\phi: \mathbb{R}^d\rightarrow \mathbb{R}$, we have $E_{\nu_k} [\phi] \rightarrow E_{\nu} [\phi]$ as $k \rightarrow \infty$.

There exists also a weak convergence in $P_2(\mathbb{R}^d)$: $\nu_k$ converges to $\nu$ weakly in $P_2(\mathbb{R}^d)$ if and only if for any continuous functions $\phi$ satisfying for some constant $C \in \mathbb{R}$: $ \forall x \in \mathbb{R}^d$, $\mid \phi(x) \mid \leq C(1+\|x\|)^2$, we have that $E_{\nu_k} [\phi] \rightarrow E_{\nu} [\phi]$ as $k \rightarrow \infty$.

The following proposition is well known (see for instance theorem 6.9 in \citet{villani}, or \citet{mallows} for a proof). It makes the link between weak convergences and $W_2$ convergence.

\begin{proposition}\label{prop:W_2_for_weak}
The three following statements are equivalent:

1/ $W_2(\nu_n,\nu) \rightarrow 0$

2/ $\nu_n \rightarrow \nu$ (weakly in $P_2(\mathbb{R}^d))$

3/ $\nu_n \rightarrow \nu$ (weakly in $\Delta(\mathbb{R}^d))$ and $E_{\nu_n}[\|s\|^2] \rightarrow E_{\nu}[\|s\|^2]$.
\end{proposition}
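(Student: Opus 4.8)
Since this proposition is classical, the plan is to establish the full three-way equivalence through a closed chain of implications: I would prove $(1)\Rightarrow(3)$, $(3)\Rightarrow(1)$ and $(3)\Rightarrow(2)$, using the trivial observation that $(2)\Rightarrow(3)$ (both a bounded continuous $\phi$ and the function $\phi(x)=\|x\|^2$ satisfy the growth bound $|\phi(x)|\leq C(1+\|x\|)^2$, so weak convergence in $P_2$ specializes to both requirements in $(3)$). Together these give $(1)\Leftrightarrow(3)\Leftrightarrow(2)$.

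For $(1)\Rightarrow(3)$, convergence of second moments follows from the triangle inequality for $W_2$: writing $\delta_0$ for the Dirac mass at the origin, one has $\sqrt{E_{\nu}[\|s\|^2]}=W_2(\nu,\delta_0)$, whence $|\sqrt{E_{\nu_n}[\|s\|^2]}-\sqrt{E_{\nu}[\|s\|^2]}|\leq W_2(\nu_n,\nu)\to 0$. For weak convergence I would test against an arbitrary bounded Lipschitz function $\phi$: choosing an optimal coupling $\pi_n$ realizing $W_2(\nu_n,\nu)$ and applying Cauchy--Schwarz,
\[
|E_{\nu_n}[\phi]-E_{\nu}[\phi]|\leq \mathrm{Lip}(\phi)\int \|x-y\|\,d\pi_n \leq \mathrm{Lip}(\phi)\,W_2(\nu_n,\nu)\to 0,
\]
and since the bounded Lipschitz functions are convergence-determining, $\nu_n\to\nu$ weakly in $\Delta(\mathbb{R}^d)$.

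The core of the argument is $(3)\Rightarrow(1)$, and this is where I expect the main difficulty. I would invoke the Skorokhod representation theorem to construct, on a common probability space, random variables $X_n\sim\nu_n$ and $X\sim\nu$ with $X_n\to X$ almost surely. Since $(X_n,X)$ is then a coupling of $\nu_n$ and $\nu$, we have $W_2(\nu_n,\nu)^2\leq E[\|X_n-X\|^2]$, so it suffices to show this expectation tends to $0$. Now $\|X_n\|^2\to\|X\|^2$ almost surely while $E[\|X_n\|^2]=E_{\nu_n}[\|s\|^2]\to E_{\nu}[\|s\|^2]=E[\|X\|^2]$ by hypothesis; almost sure convergence together with convergence of the integrals of these nonnegative variables yields, via Scheff\'e's lemma, that $\|X_n\|^2\to\|X\|^2$ in $L^1$ and hence that $(\|X_n\|^2)_n$ is uniformly integrable. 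Because $\|X_n-X\|^2\leq 2\|X_n\|^2+2\|X\|^2$, the family $(\|X_n-X\|^2)_n$ is dominated by a uniformly integrable family and is therefore itself uniformly integrable; combined with its almost sure convergence to $0$, the Vitali convergence theorem gives $E[\|X_n-X\|^2]\to 0$, which closes $(3)\Rightarrow(1)$.

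Finally, for $(3)\Rightarrow(2)$ I would run a truncation $\varepsilon$-argument: given continuous $\phi$ with $|\phi(x)|\leq C(1+\|x\|)^2$ and a continuous cutoff $\chi_R$ equal to $1$ on $\{\|x\|\leq R\}$ and to $0$ outside $\{\|x\|\leq R+1\}$, the product $\phi\chi_R$ is bounded continuous, so $E_{\nu_n}[\phi\chi_R]\to E_{\nu}[\phi\chi_R]$, while the tails satisfy $E_{\nu_n}[|\phi|(1-\chi_R)]\leq C\int_{\|x\|>R}(1+\|x\|)^2\,d\nu_n$ and are controlled uniformly in $n$ by the uniform integrability of $(1+\|s\|)^2$ established exactly as in the previous step. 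Letting $R\to\infty$ concludes. The only genuinely delicate ingredient throughout is the passage from convergence of second moments to uniform integrability of the squared norms, which I isolate via the Scheff\'e/Vitali argument in the $(3)\Rightarrow(1)$ step.
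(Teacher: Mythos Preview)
Your argument is correct and complete. The one point worth double-checking is that the uniform integrability you establish in the $(3)\Rightarrow(1)$ step for $\|X_n\|^2$ on the Skorokhod space is \emph{exactly} the tail condition $\sup_n\int_{\|x\|>R}\|x\|^2\,d\nu_n\to 0$ needed in the $(3)\Rightarrow(2)$ step (since $X_n\sim\nu_n$), so the cross-reference you make there is legitimate.

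As for the comparison: the paper does not prove this proposition at all. It states it as well known and refers the reader to Theorem~6.9 in Villani and to Mallows for a proof. Your proposal therefore supplies a genuine, self-contained proof where the paper gives none. The route you take---Skorokhod representation combined with Scheff\'e's lemma to upgrade almost-sure convergence plus convergence of moments into $L^1$ convergence, then Vitali---is essentially the standard textbook argument and is in the spirit of what Villani does. An alternative, slightly more elementary route for $(3)\Rightarrow(1)$ avoids Skorokhod altogether by working directly with tightness and lower semicontinuity of $W_2$, but your approach is cleaner and makes the uniform-integrability mechanism transparent, which is useful since that same mechanism is reused in $(3)\Rightarrow(2)$.
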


The continuity result is expressed in the next proposition. 

\begin{proposition}\label{cor:continuity}
If $\nu_k$ and $\lambda_k$ are two sequences of measure in $P_2(\mathbb{R})$ that converge in $W_2$ distance to $\nu$ and $\lambda$, then $W_2(T_{\lambda_k}(\nu_k),T_{\lambda}(\nu))\rightarrow 0$ 
\end{proposition}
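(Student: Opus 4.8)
The plan is to reduce everything to integrals over the unit interval against quantile functions, and then to exploit that $W_2$-convergence of measures on $\mathbb{R}$ is exactly $L^2$-convergence of their quantile functions. Write $Q_\nu:=F_\nu^{-1}$ and $Q_\mu:=F_\mu^{-1}$ for the quantile (right inverse) functions, and recall that $\overline{\Pi}_\nu$ is realized by the comonotone coupling $(L,S_n)=(Q_\mu(U),Q_\nu(U))$ with $U$ uniform on $[0,1]$: indeed, when $L=Q_\mu(U)$ we have $F_\mu(L)=U$, so $Q_\nu(U)=F_\nu^{-1}(F_\mu(L))=\phi_\nu(L)$. Using the tower property in the definition (\ref{def:Y_n}) of $Y_{\nu,\lambda}$, for every bounded continuous $\varphi$ one gets
\[
\int \varphi \, dT_\lambda(\nu)=\alpha_{\nu,\lambda}\,E_{\overline{\Pi}_\nu}\big[\varphi(S_n)H'(LS_n-\Psi_{\nu,\lambda}(S_n))\big]=\alpha_{\nu,\lambda}\int_0^1 \varphi(Q_\nu(u))\,h_{\nu,\lambda}(u)\,du,
\]
where $h_{\nu,\lambda}(u):=H'\big(Q_\mu(u)Q_\nu(u)-\Psi_{\nu,\lambda}(Q_\nu(u))\big)$ and $\alpha_{\nu,\lambda}=1/\int_0^1 h_{\nu,\lambda}(u)\,du$. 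By \textbf{A2} the weight satisfies $\epsilon\le h_{\nu,\lambda}\le K$ uniformly, so the normalizing constant stays in $[1/K,1/\epsilon]$. By Proposition \ref{prop:W_2_for_weak} it suffices to prove that $T_{\lambda_k}(\nu_k)$ converges to $T_\lambda(\nu)$ weakly and that its second moments converge; in view of the displayed formula (applied to $\varphi$ and to $\varphi(s)=s^2$) both follow once I establish the almost-everywhere convergences $Q_{\nu_k}\to Q_\nu$ and $h_{\nu_k,\lambda_k}\to h_{\nu,\lambda}$ on $[0,1]$.

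Since $W_2(\nu_k,\nu)\to0$ is equivalent to $\|Q_{\nu_k}-Q_\nu\|_{L^2[0,1]}\to0$, and likewise for $\lambda$, I may pass to a subsequence along which $Q_{\nu_k}\to Q_\nu$ and $Q_{\lambda_k}\to Q_\lambda$ almost everywhere (a standard subsequence-of-subsequence argument upgrades the final conclusion back to the full sequence). The heart of the matter is the pointwise convergence of the argument of $H'$,
\[
a_k(u):=Q_\mu(u)Q_{\nu_k}(u)-\Gamma_{\nu_k}(Q_{\nu_k}(u))+E_{\lambda_k}[\Gamma_{\nu_k}]\ \longrightarrow\ Q_\mu(u)Q_\nu(u)-\Gamma_\nu(Q_\nu(u))+E_\lambda[\Gamma_\nu],
\]
after which $h_{\nu_k,\lambda_k}(u)=H'(a_k(u))\to h_{\nu,\lambda}(u)$ by continuity of $H'$. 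The first term converges a.e. from $Q_{\nu_k}\to Q_\nu$ a.e. and the boundedness of $Q_\mu$ (here I use that by \textbf{A1} the support of $\mu$ is $[0,1]$, so $0\le Q_\mu\le 1$).

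The two remaining terms rest on the continuity of the map $\nu\mapsto\Gamma_\nu$. Because $\gamma_\nu(t)=F_\mu^{-1}(F_\nu(t))$ takes values in $[0,1]$ (again by the compact support of $\mu$), each $\Gamma_\nu(s)=\int_0^s\gamma_\nu$ is $1$-Lipschitz with $|\Gamma_\nu(s)|\le|s|$, so the family $\{\Gamma_{\nu_k}\}$ is equi-Lipschitz. Weak convergence $\nu_k\to\nu$ gives $F_{\nu_k}\to F_\nu$ at every continuity point of $F_\nu$, hence $\gamma_{\nu_k}\to\gamma_\nu$ almost everywhere, and dominated convergence ($\gamma\le1$) yields $\Gamma_{\nu_k}\to\Gamma_\nu$ pointwise, in fact locally uniformly by equi-Lipschitzness. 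Combining this with $Q_{\nu_k}\to Q_\nu$ a.e. gives $\Gamma_{\nu_k}(Q_{\nu_k}(u))\to\Gamma_\nu(Q_\nu(u))$ a.e., via the splitting $|\Gamma_{\nu_k}(Q_{\nu_k})-\Gamma_\nu(Q_\nu)|\le|Q_{\nu_k}-Q_\nu|+|\Gamma_{\nu_k}(Q_\nu)-\Gamma_\nu(Q_\nu)|$. For the constant, writing $E_{\lambda_k}[\Gamma_{\nu_k}]=\int_0^1\Gamma_{\nu_k}(Q_{\lambda_k}(w))\,dw$, the same splitting gives $\Gamma_{\nu_k}(Q_{\lambda_k}(w))\to\Gamma_\nu(Q_\lambda(w))$ a.e., while $|\Gamma_{\nu_k}(Q_{\lambda_k}(w))|\le|Q_{\lambda_k}(w)|$ together with the $L^2$-convergence of $Q_{\lambda_k}$ makes the integrands uniformly integrable; Vitali's theorem then gives $E_{\lambda_k}[\Gamma_{\nu_k}]\to E_\lambda[\Gamma_\nu]$. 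This proves $a_k\to a$, hence $h_{\nu_k,\lambda_k}\to h_{\nu,\lambda}$, almost everywhere.

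Finally I assemble the pieces. For bounded continuous $\varphi$, the integrands $\varphi(Q_{\nu_k}(u))h_{\nu_k,\lambda_k}(u)$ converge a.e. and are bounded by $K\|\varphi\|_\infty$, so dominated convergence gives convergence of the numerators; since $\int_0^1 h_{\nu_k,\lambda_k}\to\int_0^1 h_{\nu,\lambda}\ge\epsilon$ by bounded convergence, $\alpha_{\nu_k,\lambda_k}\to\alpha_{\nu,\lambda}$ and hence $\int\varphi\,dT_{\lambda_k}(\nu_k)\to\int\varphi\,dT_\lambda(\nu)$, i.e. weak convergence holds. For the second moments, $Q_{\nu_k}^2 h_{\nu_k,\lambda_k}$ converges a.e. and is the product of the uniformly integrable family $\{Q_{\nu_k}^2\}$ (from $L^2$-convergence) with the uniformly bounded $h_{\nu_k,\lambda_k}$, hence converges in $L^1$ by Vitali; combined with $\alpha_{\nu_k,\lambda_k}\to\alpha_{\nu,\lambda}$ this gives $E_{T_{\lambda_k}(\nu_k)}[s^2]\to E_{T_\lambda(\nu)}[s^2]$. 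Proposition \ref{prop:W_2_for_weak} then yields $W_2(T_{\lambda_k}(\nu_k),T_\lambda(\nu))\to0$ along the subsequence, and since every subsequence admits a further subsequence with this same limit, the full sequence converges. The main obstacle is the third paragraph: transferring the $W_2$ (equivalently quantile/$L^2$) convergence of $\nu_k$ into convergence of $\Gamma_{\nu_k}$, which depends on $\nu_k$ through its cumulative function $F_{\nu_k}$ rather than its quantile, so one must route through weak convergence and handle discontinuities of $F_\nu$; the two-sided bound $0\le\gamma_\nu\le1$ coming from the compact support in \textbf{A1} and the bounds on $H'$ in \textbf{A2} are precisely what supply the domination and equi-Lipschitz estimates that make this transfer work.
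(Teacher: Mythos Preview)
Your proof is correct and follows a genuinely different route from the paper's. The paper first establishes a \emph{quantitative} Lipschitz estimate $\|\Phi_{\nu_1}-\Phi_{\nu_2}\|_\infty\le C\,W_2(\nu_1,\nu_2)$ via Cauchy--Schwarz and the lower bound on $f_\mu$ from \textbf{A1}, then transfers this to $\Gamma_\nu$ using that the Fenchel transform is an isometry for the sup norm; from this it deduces uniform convergence of $\Psi_{\nu_k,\lambda_k}$ and then pushes through the weak-$P_2$ characterization of $W_2$-convergence directly, without passing to subsequences. You instead parametrize everything by quantile functions on $[0,1]$, use that $W_2$-convergence is $L^2$-convergence of quantiles, extract a.e.-convergent subsequences, obtain pointwise (hence locally uniform, by equi-Lipschitzness) convergence of $\Gamma_{\nu_k}$ from weak convergence of the CDFs, and close with Vitali/dominated convergence and a subsequence-of-subsequence argument. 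The paper's approach buys a sharper result (Lipschitz continuity of $\nu\mapsto\Gamma_\nu$, not merely continuity) and avoids the subsequence machinery; your approach is more elementary in that it sidesteps the Fenchel isometry and the explicit use of the lower bound on $f_\mu$, relying only on the compact support of $\mu$ and the bounds on $H'$.
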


The proof of this proposition is postponed to the Annex \ref{continuity_annexe}.\\

\subsection{KKM theorem and existence of equilibrium.}
We are now ready to state the existence of a Nash equilibrium in $\overline{G}_n(\mu)$. According to Lemma \ref{lemma:C4prime} and \ref{lemma:C4'}, to prove the existence of an equilibrium in $\overline{G}_n(\mu)$, we have to show that there exists $\nu_n \in \Delta(\mathbb{R})$ such that $T_{\overline{\lambda}_n}(\nu_n)=\overline{\lambda}_n$. Remember that $\overline{\lambda}_n \in \Delta_f(\mathbb{R})$ where $\Delta_f(\mathbb{R})$ is the set of probability measures on $\mathbb{R}$ with finite support. Observe next that $T_{\lambda}(\nu)$ is defined by a density function with respect to $\nu$. In particular $T_{\lambda}(\nu)$ is absolutely continuous with respect to $\nu$, which is denoted $T_{\lambda}(\nu) \ll \nu $ (i.e. for all measurable set $A$, $\nu(A)=0 \Rightarrow T_{\lambda}(\nu)(A)=0$). Therefore $T_{\lambda}(\nu) \in \Delta_f(\mathbb{R})$ if $\nu \in \Delta_f(\mathbb{R})$.\\

The next theorem can then be applied to $T_{\overline{\lambda}_n}$ to conclude the existence of equilibrium.

\begin{theorem}
A map $T: \Delta_f(\mathbb{R}) \rightarrow \Delta_f(\mathbb{R})$ that is continuous for the $W_2$ metric and satisfies $T(\nu) \ll \nu$ for all $\nu$ is necessarily onto.
\end{theorem}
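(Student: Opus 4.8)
The plan is to reduce the statement to a finite-dimensional simplex and apply the KKM lemma. Fix an arbitrary target $\rho \in \Delta_f(\mathbb{R})$ and write its (finite) support as $\{x_1,\dots,x_m\}$ with weights $\rho_i := \rho(\{x_i\}) > 0$. Let $\Sigma$ be the set of probability measures supported on $\{x_1,\dots,x_m\}$, identified via $\nu \mapsto (\nu(\{x_1\}),\dots,\nu(\{x_m\}))$ with the standard $(m-1)$-simplex in $\mathbb{R}^m$; its vertices are the Diracs $\delta_{x_i}$, and for $S \subseteq \{1,\dots,m\}$ its faces $\Delta_S$ are the measures supported on $\{x_i : i \in S\}$. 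The hypothesis $T(\nu) \ll \nu$ guarantees that $T$ maps $\Sigma$ into itself, since $\mathrm{supp}(T(\nu)) \subseteq \mathrm{supp}(\nu) \subseteq \{x_1,\dots,x_m\}$. I would first record that on $\Sigma$ the $W_2$ topology coincides with the Euclidean topology on weight vectors: by Proposition \ref{prop:W_2_for_weak}, $W_2$-convergence implies weak convergence, and on a fixed finite support weak convergence is exactly convergence of the atomic masses (test against bump functions). Composing this with the $W_2$-continuity of $T$, each map $\nu \mapsto (T\nu)_i := T(\nu)(\{x_i\})$ is continuous on $\Sigma$.

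Next I introduce the closed cover. For $i = 1, \dots, m$ set
$$ C_i := \{ \nu \in \Sigma : (T\nu)_i \geq \rho_i \}, $$
which is closed by the continuity just established. The KKM lemma is to be applied to $\{C_1,\dots,C_m\}$, so I must check its covering condition: for every nonempty $S \subseteq \{1,\dots,m\}$, $\Delta_S \subseteq \bigcup_{i \in S} C_i$. Take $\nu \in \Delta_S$. Since $T(\nu) \ll \nu$ and $\nu$ is supported on $\{x_i : i \in S\}$, the measure $T(\nu)$ is supported there too, so $\sum_{i \in S} (T\nu)_i = 1$. If $\nu$ belonged to no $C_i$ with $i \in S$, we would have $(T\nu)_i < \rho_i$ for every $i \in S$, whence $1 = \sum_{i \in S} (T\nu)_i < \sum_{i \in S} \rho_i \leq \sum_{i=1}^m \rho_i = 1$, a contradiction. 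Hence $\nu \in \bigcup_{i \in S} C_i$, and the covering condition holds.

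The KKM lemma then yields a point $\nu^\star \in \bigcap_{i=1}^m C_i$, i.e. $(T\nu^\star)_i \geq \rho_i$ for all $i$. Summing over $i$ and using that both $T(\nu^\star)$ and $\rho$ are probability measures gives $1 = \sum_i (T\nu^\star)_i \geq \sum_i \rho_i = 1$, so every inequality is an equality: $(T\nu^\star)_i = \rho_i$ for all $i$, that is $T(\nu^\star) = \rho$. Since $\rho \in \Delta_f(\mathbb{R})$ was arbitrary, $T$ is onto. The only genuinely delicate points are the reduction to a fixed finite support (so that $\Sigma$ is a compact simplex on which KKM applies and on which $T$ acts continuously) and the correct orientation of the inequality defining $C_i$: choosing $(T\nu)_i \geq \rho_i$ is what makes the normalization $\sum_i \rho_i = 1$ convert the KKM output into the exact equality $T(\nu^\star) = \rho$, while simultaneously making the covering condition a direct consequence of the absolute-continuity hypothesis $T(\nu)\ll\nu$.
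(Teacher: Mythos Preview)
Your proof is correct and follows essentially the same strategy as the paper: reduce to the finite simplex supported on the target's atoms and apply a KKM-type argument to the closed sets cut out by the coordinates of $T$. The only notable difference is the orientation of the inequality: the paper takes $C_i=\{x:(Tx)_i\le\rho_i\}$ and invokes the dual (Gale) form of KKM, checking that the face $F_i=\{x:x_i=0\}$ lies in $C_i$ and that the $C_i$ cover the whole simplex; you take $C_i=\{\nu:(T\nu)_i\ge\rho_i\}$ and verify the standard KKM covering condition $\Delta_S\subseteq\bigcup_{i\in S}C_i$ directly from $T(\nu)\ll\nu$. Both routes land on the same equality-from-summation finish, and your explicit remark that the $W_2$ topology on a fixed finite support agrees with the Euclidean topology on weight vectors fills in a step the paper leaves implicit.
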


\begin{proof}
Let $\lambda$ be a measure in $\Delta_f(\mathbb{R})$ and denote $K$ its support. If $T(\nu) \ll \nu$, then necessarily the support of $T(\nu)$ is included in the support of $\nu$. Therefore $T$ maps $\Delta(K)$ to $\Delta(K)$.\\ $\Delta(K)$ can be identified with the $|K|$-dimensional simplex hereafter denoted $\Delta$ and the restriction of $T$ to $\Delta$ is a continuous map. It further preserves the faces $F_i:=\{ x \in \Delta | x_i=0 \}$. It follows from an argument used in \citet{gale} that $T$ is onto. Indeed, let $\lambda \in \Delta$ and define $C_i:= \{ x \in \Delta | T(x)_i \leq \lambda_i \}$. Since $T$ is continuous, $C_i$ is clearly a closed subset of $\Delta$. Furthermore, if $x \in F_i$ then $x_i=0$ and thus $T(x)_i=0 \leq \lambda_i$. We conclude therefore that for all $i$, $F_i \subset C_i$. We next argue that $\Delta \subset \cup_i C_i$. Indeed, for all $x \in \Delta$, $T(x) \in \Delta$. There must exists $i$ such that $T(x)_i \leq \lambda_i$. Otherwise we would have for all $i$, $T(x)_i > \lambda_i$, and summing all those inequalities we would get $1>1$. Therefore there exists $i$ such that $x \in C_i$. As announced, $\Delta \subset \cup_i C_i$.\\
 According to KKM theorem (see the particular version presented in \cite{mertens2014repeated} \begin{color}{red} page \end{color}) there exists $x \in \cap_i C_i$. So for this $x$ we get for all $i$ that $T(x)_i \leq \lambda_i$. Since the sum over $i$ of both sides equal to $1$, we infer that these inequalities are in fact equalities, and thus $T(x)=\lambda$. 
\end{proof}

Since our map $T_{\overline{\lambda_n}}$ is onto, we conclude that for all $n$, there exists $\nu_n$ such that $T_{\overline{\lambda}_n}(\nu_n)=\overline{\lambda}_n$. The corresponding pair $(\Pi_{\nu_n},\Psi_{\nu_n,\overline{\lambda}_n})$ satisfies \textbf{(C1)}, \textbf{(C2)}, \textbf{(C3)} and \textbf{(C4')}. We conclude then with Lemma \ref{lemma:C4prime}-2, that:

\begin{corollary}\label{from_nu_to_equilibrium}
There exists a reduced equilibrium in $G_n(\mu)$.
\end{corollary}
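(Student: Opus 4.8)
The plan is to assemble the reductions established in this section into a single chain, with the onto property of $T_{\overline{\lambda}_n}$ as the load-bearing step. First I would verify that the preceding onto theorem applies to the operator $T_{\overline{\lambda}_n}$. Its $W_2$-continuity is exactly Proposition \ref{cor:continuity}, and the absolute-continuity property $T_{\lambda}(\nu) \ll \nu$ is built into Definition \ref{def:operator_T}, since $T_{\lambda}(\nu)=\alpha_{\nu,\lambda}\,Y_{\nu,\lambda}\,\nu$ is defined through a density with respect to $\nu$. This property forces the support of $T_{\overline{\lambda}_n}(\nu)$ to be contained in that of $\nu$, so $T_{\overline{\lambda}_n}$ indeed maps $\Delta_f(\mathbb{R})$ into itself. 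Because $\overline{\lambda}_n \in \Delta_f(\mathbb{R})$, the onto conclusion yields a measure $\nu_n \in \Delta_f(\mathbb{R})$ with $T_{\overline{\lambda}_n}(\nu_n)=\overline{\lambda}_n$.

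Next I would translate this fixed-point equation back into the equilibrium conditions. Lemma \ref{lemma:C4'} converts $T_{\overline{\lambda}_n}(\nu_n)=\overline{\lambda}_n$ into the statement that the pair $(\overline{\Pi}_{\nu_n},\Psi_{\nu_n,\overline{\lambda}_n})$ satisfies \textbf{(C4')}, while Lemma \ref{lemma:nu} guarantees that this same pair automatically satisfies \textbf{(C1)}, \textbf{(C2)} and \textbf{(C3)}. Feeding this pair into the second claim of Lemma \ref{lemma:C4prime} produces a genuine $(\Pi_n,\Psi_n)$ with marginal $\overline{\Pi}_{\nu_n}$ on $(L,S_n)$ satisfying \textbf{(C1)} through \textbf{(C4)}. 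By Corollary \ref{Cor:C1234} this is an equilibrium of $\overline{G}_n(\mu)$, and Proposition \ref{prop:reduced_to_general} upgrades it to an equilibrium of $G_n(\mu)$, that is, a reduced equilibrium as asserted in Corollary \ref{from_nu_to_equilibrium}.

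The genuine analytic difficulty is therefore hidden in the ingredients rather than in the corollary itself: the real work is the $W_2$-continuity of $T_{\lambda}$ (Proposition \ref{cor:continuity}, deferred to the annex) together with the quantile/Smirnov construction packaged into Lemma \ref{lemma:nu}. At the level of this corollary the expected main obstacle is purely bookkeeping, namely checking that the hypotheses of the onto theorem are literally met so that the argument stays inside $\Delta_f(\mathbb{R})$, where the simplex/KKM reasoning lives, and making sure that the convex function $\Psi_n$ appearing in \textbf{(C1)}, \textbf{(C3)} and \textbf{(C4)} is the single object $\Psi_{\nu_n,\overline{\lambda}_n}$ throughout, so that the four conditions refer to one coherent pair $(\Pi_n,\Psi_n)$.
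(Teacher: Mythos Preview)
Your proposal is correct and matches the paper's approach essentially line for line: the paper likewise verifies that $T_{\overline{\lambda}_n}$ satisfies the hypotheses of the onto theorem, obtains $\nu_n$ with $T_{\overline{\lambda}_n}(\nu_n)=\overline{\lambda}_n$, notes that $(\overline{\Pi}_{\nu_n},\Psi_{\nu_n,\overline{\lambda}_n})$ then satisfies \textbf{(C1)}--\textbf{(C3)} and \textbf{(C4')}, and concludes via Lemma~\ref{lemma:C4prime}-2. Your invocation of Proposition~\ref{prop:reduced_to_general} at the end is harmless but slightly redundant, since by the paper's terminology a ``reduced equilibrium in $G_n(\mu)$'' is \emph{defined} to be an equilibrium of $\overline{G}_n(\mu)$.
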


\section{Convergence of $\nu_n$}\label{section_convergence}

In order to describe the asymptotics of the price process, we have first to analyze the asymptotics of any sequence $(\nu_n)$ satisfying for all $n$ the equation $T_{\overline{\lambda}_n}(\nu_n)=\overline{\lambda}_n$. For now on, $(\nu_n)$ will denote any such sequence.\\

First observe that $\overline{\lambda}_n$ is the law of $S_n=\frac{\sum_{i=1}^n u_i}{\sqrt{n}}$ when $(u_1,\dots,u_n)$ are independent and centred. It follows from the central limit theorem that $\overline{\lambda}_n$ converges in law to $\overline{\lambda}_{\infty}:=\mathcal{N}(0,1)$. Observing that the second order moments $E_{\overline{\lambda}_{n}}[S_n^2]=1$ for all $n$, this weak convergence in $\Delta(\mathbb{R})$ implies (see Proposition \ref{prop:W_2_for_weak}) the $W_2$-convergence of $\overline{\lambda}_n$ to $\overline{\lambda}_{\infty}$.

We first prove in Lemma \ref{lemma:accumulation_point} that the sequence $(\nu_n)$ is relatively compact. As a consequence we infer with Corollary \ref{cor:accumulation_point} that any sequence $(\nu_n)$ must have an accumulation point satisfying 
\begin{equation}\label{eq:T_lam}T_{\overline{\lambda}_{\infty}}(\nu)=\overline{\lambda}_{\infty}
\end{equation}

 It turns out that equation (\ref{eq:T_lam}) for $\nu$ is equivalent to the claim that $\Psi_{\nu,\overline{\lambda}_{\infty}}$ is a smooth solution to a differential system (see Proposition \ref{prop:equiv}).\\
 
We next claim in Theorem \ref{Thm_uniqueness} that this differential problem $\mathcal{D}$ has a unique solution and therefore the equation $T_{\overline{\lambda}_{\infty}}(\nu)=\overline{\lambda}_{\infty}$ has a unique solution $\nu$ (see Corollary \ref{unique_nu}). This implies the convergence of $\nu_n$ to this unique solution $\nu$, as stated in Corollary \ref{cor:convergence_nu_n}.


 
 

\begin{lemma}\label{lemma:accumulation_point}
The sequence $(\nu_n)$ is relatively compact: any subsequence of $(\nu_n)$ has an accumulation point in $P_2(\mathbb{R})$.
\end{lemma}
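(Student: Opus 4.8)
The plan is to control the tails of every $\nu_n$ by comparing $\nu_n$ directly with $\overline{\lambda}_n$, whose moments we understand explicitly. The crucial observation is that the defining equation $T_{\overline{\lambda}_n}(\nu_n)=\overline{\lambda}_n$, read through Definition \ref{def:operator_T}, forces $\overline{\lambda}_n$ and $\nu_n$ to be mutually absolutely continuous with a Radon--Nikodym density that is bounded above and below by constants that do not depend on $n$. Indeed, by Definition \ref{def:operator_T} we have $\frac{d\overline{\lambda}_n}{d\nu_n}=\alpha_{\nu_n,\overline{\lambda}_n}\,Y_{\nu_n,\overline{\lambda}_n}$. Since $Y_{\nu,\lambda}$ in (\ref{def:Y_n}) is a conditional expectation of $H'(\,\cdot\,)$ and assumption \textbf{A2} gives $\epsilon<H'<K$ pointwise, we get $\epsilon\le Y_{\nu_n,\overline{\lambda}_n}\le K$, hence $\epsilon\le E_{\nu_n}[Y_{\nu_n,\overline{\lambda}_n}]\le K$, and so by (\ref{def:alphanulambda}) $\tfrac1K\le\alpha_{\nu_n,\overline{\lambda}_n}\le\tfrac1\epsilon$. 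Multiplying these bounds and taking reciprocals yields
\[
\frac{\epsilon}{K}\le\frac{d\nu_n}{d\overline{\lambda}_n}\le\frac{K}{\epsilon},
\]
uniformly in $n$.

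Next I would transfer moment bounds from $\overline{\lambda}_n$ to $\nu_n$ through this density. For any exponent $p$,
\[
E_{\nu_n}[|s|^p]=\int |s|^p\,\frac{d\nu_n}{d\overline{\lambda}_n}\,d\overline{\lambda}_n\le\frac{K}{\epsilon}\,E_{\overline{\lambda}_n}[|s|^p].
\]
Because $\overline{\lambda}_n$ is the law of $S_n=\frac{1}{\sqrt n}\sum_{i=1}^n u_i$ with the $u_i$ independent and uniform on $\{-1,+1\}$, a direct moment computation gives $E_{\overline{\lambda}_n}[S_n^4]=3-\tfrac2n\le 3$ (recall also $E_{\overline{\lambda}_n}[S_n^2]=1$). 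Therefore $\sup_n E_{\nu_n}[s^2]\le K/\epsilon$ and, more importantly, $\sup_n E_{\nu_n}[s^4]\le 3K/\epsilon<\infty$.

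Finally, a uniform fourth-moment bound yields relative compactness in $(P_2(\mathbb{R}),W_2)$. It provides tightness, since $\nu_n(|s|>R)\le R^{-2}E_{\nu_n}[s^2]\le K/(\epsilon R^2)$, and uniform integrability of the squares, since $\int_{|s|>R}s^2\,d\nu_n\le R^{-2}E_{\nu_n}[s^4]\le 3K/(\epsilon R^2)\to0$ uniformly in $n$. Given any subsequence, Prokhorov's theorem extracts a weakly convergent sub-subsequence $\nu_{n_k}\to\nu$; the uniform square-integrability then forces $E_{\nu_{n_k}}[s^2]\to E_\nu[s^2]$, while Fatou gives $E_\nu[s^2]\le K/\epsilon<\infty$, so $\nu\in P_2(\mathbb{R})$. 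By the implication $3\Rightarrow1$ in Proposition \ref{prop:W_2_for_weak} we conclude $W_2(\nu_{n_k},\nu)\to0$. Hence every subsequence has a $W_2$-accumulation point in $P_2(\mathbb{R})$, which is exactly the assertion.

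I expect the only genuinely substantive step to be the uniform two-sided density bound of the first paragraph; everything afterwards is routine measure theory. The point to be careful about is that this comparison must run entirely through the crude pointwise bounds $\epsilon<H'<K$ of \textbf{A2}, with no appeal to finer features of the equilibrium, and that it is the uniform control of the \emph{fourth} moment $E_{\overline{\lambda}_n}[S_n^4]$—not merely $E_{\overline{\lambda}_n}[S_n^2]=1$—that upgrades weak compactness to $W_2$ compactness.
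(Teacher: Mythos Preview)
Your proof is correct and follows the same line as the paper: derive the two-sided density bound $\epsilon/K\le d\overline{\lambda}_n/d\nu_n\le K/\epsilon$ from \textbf{A2}, transfer the second-moment bound $E_{\overline{\lambda}_n}[S_n^2]=1$ to $\nu_n$, obtain tightness via Chebyshev, apply Prokhorov, and conclude with Proposition~\ref{prop:W_2_for_weak}. Your additional step of bounding $E_{\overline{\lambda}_n}[S_n^4]=3-2/n$ to get uniform integrability of the squares is a cleaner way to upgrade weak convergence to $W_2$-convergence than the paper's argument, which merely extracts a further subsequence along which $E_{\nu_{n(k)}}[s^2]$ converges without verifying that the limit equals $E_\nu[s^2]$; your fourth-moment bound closes that gap.
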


\begin{proof}

We first prove that $E_{\nu_n}[S_n^2]$ is bounded. It follows immediately from the assumptions \textbf{A2} on $H$ as well as from the definition of $Y_{\nu,\lambda}$ and $\alpha_{\nu,\lambda}$ (see equation (\ref{def:Y_n})), that $\epsilon < Y_{\nu,\lambda} < K$, and $\frac{1}{K}<\alpha_{\nu,\lambda}<\frac{1}{\epsilon}$. Therefore: $\frac{\epsilon}{K}<\alpha_{\nu,\lambda}.Y_{\nu,\lambda}<\frac{K}{\epsilon}$. According to the definition of $\overline{\lambda}_{n}$, we have $E_{\overline{\lambda}_{n}}[S_n^2]=1$. And thus:

\[ 1=E_{\overline{\lambda}_{n}}[S_n^2]=E_{T_{\overline{\lambda}_{n}}(\nu_n)}[S_n^2]=E_{\nu_n}[\alpha_{\nu_n,\overline{\lambda}_{n}}Y_{\nu_n,\overline{\lambda}_{n}}S_n^2] \geq \frac{\epsilon}{K}E_{\nu_n}[S_n^2]
\]
which leads to $E_{\nu_n}[S_n^2] \leq \frac{K}{\epsilon}$. We conclude with Markov-Tchebichev inequality that for all $\eta>0$, $\nu_n(S_n^2 \geq \frac{\eta \epsilon}{K})\leq \eta $. This indicates that the sequence of measures $(\nu_n)$ is tight: for all $\eta>0$ there exists a compact set $[-\sqrt{\frac{M}{\eta}},\sqrt{\frac{M}{\eta}}]$ such that for all $n$, $\nu_n([-\sqrt{\frac{M}{\eta}},\sqrt{\frac{M}{\eta}}])\geq 1-\eta$. 

This tightness property implies with Prokhorov's theorem that there exists a subsequence $\nu_{n(k)}$ of $\nu_n$ that weakly converges to some $\nu \in \Delta(\mathbb{R})$. Since the second order moment are bounded, we may select a subsequence of $\nu_{n(k)}$ such that the second order moments converge.  
 According to Proposition \ref{prop:W_2_for_weak}, this implies the $W_2$-convergence of $\nu_{n(k)}$.\end{proof}

\begin{corollary}\label{cor:accumulation_point}
Any accumulation point $\nu$ of the sequence $(\nu_n)$ satisfies $T_{\overline{\lambda}_{\infty}}(\nu)=\overline{\lambda}_{\infty}$ where $\overline{\lambda}_{\infty}=\mathcal{N}(0,1)$.
\end{corollary}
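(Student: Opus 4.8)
The plan is to deduce the statement directly from the continuity of the operator $T$ (Proposition \ref{cor:continuity}) combined with the $W_2$-convergence of $\overline{\lambda}_n$ to $\overline{\lambda}_\infty$. First I would fix an accumulation point $\nu$ of $(\nu_n)$: by Lemma \ref{lemma:accumulation_point} together with the definition of an accumulation point in $P_2(\mathbb{R})$, there is a subsequence $(\nu_{n(k)})_k$ with $W_2(\nu_{n(k)},\nu)\to 0$.

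Next I would record the two convergences I intend to feed into the continuity statement. On the one hand, along the chosen subsequence $\nu_{n(k)}\to\nu$ in $W_2$. On the other hand, as recalled just before Lemma \ref{lemma:accumulation_point}, the central limit theorem gives weak convergence of $\overline{\lambda}_n$ to $\mathcal{N}(0,1)$, and since the second moments $E_{\overline{\lambda}_n}[S_n^2]=1$ all agree with the second moment of $\overline{\lambda}_\infty=\mathcal{N}(0,1)$, Proposition \ref{prop:W_2_for_weak} upgrades this to $W_2(\overline{\lambda}_n,\overline{\lambda}_\infty)\to 0$; in particular $W_2(\overline{\lambda}_{n(k)},\overline{\lambda}_\infty)\to 0$ along the subsequence.

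Then I would apply Proposition \ref{cor:continuity} with the two convergent sequences $\nu_{n(k)}\to\nu$ and $\lambda_{n(k)}:=\overline{\lambda}_{n(k)}\to\overline{\lambda}_\infty$, which yields $W_2\bigl(T_{\overline{\lambda}_{n(k)}}(\nu_{n(k)}),\,T_{\overline{\lambda}_\infty}(\nu)\bigr)\to 0$. By the defining property of the sequence $(\nu_n)$, one has $T_{\overline{\lambda}_{n(k)}}(\nu_{n(k)})=\overline{\lambda}_{n(k)}$ for every $k$. Hence $T_{\overline{\lambda}_\infty}(\nu)$ is the $W_2$-limit of $\overline{\lambda}_{n(k)}$, which is $\overline{\lambda}_\infty$; since limits in the metric space $(P_2(\mathbb{R}),W_2)$ are unique, $T_{\overline{\lambda}_\infty}(\nu)=\overline{\lambda}_\infty$.

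There is no genuine obstacle here: the statement is a soft consequence of the already-established continuity of $T$ and of the CLT. The only points requiring a little care are purely bookkeeping: ensuring the auxiliary sequence $\overline{\lambda}_{n(k)}$ is indexed along the \emph{same} subsequence as $\nu_{n(k)}$ so that both hypotheses of Proposition \ref{cor:continuity} hold simultaneously, and checking that every measure involved lies in $P_2(\mathbb{R})$ (this holds for $\overline{\lambda}_n$ and $\overline{\lambda}_\infty$ because they have finite second moments, and for $\nu_{n(k)}$ and $\nu$ by Lemma \ref{lemma:accumulation_point}), so that Proposition \ref{cor:continuity} is genuinely applicable.
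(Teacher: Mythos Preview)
Your proposal is correct and follows essentially the same approach as the paper: extract a $W_2$-convergent subsequence $\nu_{n(k)}\to\nu$, use the $W_2$-convergence $\overline{\lambda}_{n(k)}\to\overline{\lambda}_\infty$, and apply the continuity result Proposition~\ref{cor:continuity} together with $T_{\overline{\lambda}_{n(k)}}(\nu_{n(k)})=\overline{\lambda}_{n(k)}$ to conclude. Your write-up simply spells out a few details (uniqueness of $W_2$-limits, membership in $P_2(\mathbb{R})$) that the paper leaves implicit.
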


\begin{proof}
Take a subsequence $\nu_{n(k)}$ converging to $\nu$ in $W_2$. Since we also have $\overline{\lambda}_{n(k)} \rightarrow \overline{\lambda}_{\infty}$ in $W_2$, we may apply our continuity result on $T$ (see Proposition \ref{cor:continuity}) to conclude $T_{\overline{\lambda}_{\infty}}(\nu)=\overline{\lambda}_{\infty}$.
\end{proof}


\begin{proposition}\label{prop:equiv}
Suppose that $\nu$ is a probability measure such that $T_{\overline{\lambda}_{\infty}}(\nu)=\overline{\lambda}_{\infty}$ with $\overline{\lambda}_{\infty}=\mathcal{N}(0,1)$, then:

1/ The function $\Psi_{\nu,\overline{\lambda}_{\infty}}$ (see Definition \ref{def:Phi}) is $C^2$.

2/ The pair $(\psi,c):=(\Psi_{\nu,\overline{\lambda}_{\infty}},\frac{1}{\alpha_{\nu,\overline{\lambda}_{\infty}}})$ is a solution of the following differential system $\mathcal{D}$:

\begin{equation*}
  (\mathcal{D}) \left\{
      \begin{array}{ll}
      (1)
     
&\forall s \in \mathbb{R}, f_{\mu}(\psi'(s))\psi''(s)H'(s\psi'(s)-\psi(s))=c\mathcal{N}(s) \\ (2)
&lim_{s \rightarrow -\infty} \psi'(s)=0\\ (3)
&lim_{s \rightarrow +\infty} \psi'(s)=1\\ (4)
&\int_{-\infty}^{+\infty}  \psi(z)\mathcal{N}(z)dz=0
      \end{array}
    \right.
    \end{equation*}

where $\mathcal{N}(z):=\frac{e^{-\frac{z^2}{2}}}{\sqrt{2\pi}}$.

\end{proposition}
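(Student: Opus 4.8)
The strategy is to convert the fixed-point identity $T_{\overline{\lambda}_{\infty}}(\nu)=\overline{\lambda}_{\infty}$ into the pointwise differential identity (1) of $(\mathcal{D})$, after first establishing enough regularity by a bootstrap. Write $\psi:=\Psi_{\nu,\overline{\lambda}_{\infty}}$, so that by (\ref{def:Gamma}) and (\ref{def:Psi}) one has $\psi'=\gamma_{\nu}=F_{\mu}^{-1}\circ F_{\nu}$. By Definition~\ref{def:operator_T} the hypothesis reads $\overline{\lambda}_{\infty}=\alpha_{\nu,\overline{\lambda}_{\infty}}\,Y_{\nu,\overline{\lambda}_{\infty}}\,\nu$. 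Since $H'$ lies in $(\epsilon,K)$ by \textbf{A2}, so does the conditional expectation $Y_{\nu,\overline{\lambda}_{\infty}}$, and $\alpha_{\nu,\overline{\lambda}_{\infty}}\in(\tfrac1K,\tfrac1\epsilon)$ (the bounds used in the proof of Lemma~\ref{lemma:accumulation_point}). Consequently $\nu$ and $\overline{\lambda}_{\infty}$ are equivalent measures: $\nu$ is non-atomic, has full support, satisfies $0<F_{\nu}(s)<1$ for all $s$, and admits the Lebesgue density $f_{\nu}=\mathcal{N}/(\alpha_{\nu,\overline{\lambda}_{\infty}}Y_{\nu,\overline{\lambda}_{\infty}})$.

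Because $F_{\nu}$ is continuous and strictly increasing, $\phi_{\nu}$ is strictly increasing, so under $\overline{\Pi}_{\nu}$ (the law of $(L,\phi_{\nu}(L))$) the variable $L$ is recovered from $S_n$ as $L=\gamma_{\nu}(S_n)=\psi'(S_n)$ almost surely; this is the continuous instance of the subgradient relation \textbf{(C3)} obtained in the proof of Lemma~\ref{lemma:nu}. The conditional expectation in (\ref{def:Y_n}) therefore collapses to
\[
Y_{\nu,\overline{\lambda}_{\infty}}(s)=H'\!\left(s\,\psi'(s)-\psi(s)\right).
\]
I would then bootstrap the regularity. Non-atomicity gives $F_{\nu}$ continuous, hence $\psi'=F_{\mu}^{-1}\circ F_{\nu}$ continuous and $\psi\in C^1$; the displayed formula then makes $Y_{\nu,\overline{\lambda}_{\infty}}$ continuous, so $f_{\nu}$ is continuous and strictly positive and $F_{\nu}\in C^1$; finally, since \textbf{A1} makes $f_{\mu}$ strictly positive and $C^1$ so that $F_{\mu}^{-1}\in C^1$, the composition $\psi'=F_{\mu}^{-1}\circ F_{\nu}$ is $C^1$, i.e. $\psi\in C^2$. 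This proves claim~1.

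For claim~2, the identity $F_{\mu}(\gamma_{\nu}(s))=F_{\nu}(s)$ holds for every $s$ (since $0<F_{\nu}(s)<1$ and $F_{\mu}\circ F_{\mu}^{-1}=\mathrm{id}$); differentiating the now-$C^1$ functions gives $f_{\mu}(\psi'(s))\psi''(s)=f_{\nu}(s)$. Substituting $f_{\nu}=\mathcal{N}/(\alpha_{\nu,\overline{\lambda}_{\infty}}Y_{\nu,\overline{\lambda}_{\infty}})$ together with the displayed form of $Y_{\nu,\overline{\lambda}_{\infty}}$ and $c=1/\alpha_{\nu,\overline{\lambda}_{\infty}}$ yields equation (1) of $(\mathcal{D})$. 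For the boundary conditions, $\psi'(s)=F_{\mu}^{-1}(F_{\nu}(s))$ with $F_{\nu}(s)\to0$ as $s\to-\infty$ and $F_{\nu}(s)\to1$ as $s\to+\infty$; since $\mu$ is carried by $[0,1]$ with strictly positive density, $F_{\mu}^{-1}(0)=0$ and $F_{\mu}^{-1}(y)\to1$ as $y\to1^-$, giving equations (2) and (3). Equation (4) is exactly \textbf{(C1)} for $\psi$, namely $\int\psi(z)\mathcal{N}(z)\,dz=E_{\overline{\lambda}_{\infty}}[\psi]=0$, which holds by the very definition (\ref{def:Psi}) of $\Psi_{\nu,\overline{\lambda}_{\infty}}$.

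The crux is the regularity bootstrap together with the collapse of $Y_{\nu,\overline{\lambda}_{\infty}}$: the density of $\nu$ is expressed through $\psi$, which is itself built from $F_{\nu}$, so one cannot differentiate before climbing the ladder continuity $\to C^1\to C^2$, and the removal of the conditional expectation rests delicately on the strict monotonicity of $\phi_{\nu}$ (equivalently the non-atomicity of $\nu$) deduced from the $H'$-bounds of \textbf{A2}. Once this regularity is secured, equation (1) and the three boundary/normalization conditions follow by direct differentiation and by reading off the limits of $\gamma_{\nu}$.
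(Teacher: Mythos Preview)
Your proof is correct and follows essentially the same route as the paper: equivalence of $\nu$ and $\overline{\lambda}_{\infty}$ from the bounds on $H'$, collapse of the conditional expectation $Y_{\nu,\overline{\lambda}_{\infty}}$ to $H'(s\psi'(s)-\psi(s))$ once $L=\gamma_{\nu}(S)$ a.s., a regularity bootstrap for $\psi$, and finally reading off $(\mathcal{D})$ from $F_{\mu}(\gamma_{\nu})=F_{\nu}$ together with the boundary and normalization conditions. The only cosmetic difference is that the paper inserts an absolute-continuity step for $\gamma_{\nu}$ before identifying $\psi''$, whereas you go directly $C^0\to C^1\to C^2$ through the explicit formula for $f_{\nu}$; both arguments are equivalent.
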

This proposition is proved in Annex \ref{Annexe:equivalence}.

\begin{theorem}\label{Thm_uniqueness}
There exists at most one pair $(\psi,c)$ solution to the system $\mathcal{D}$.
\end{theorem}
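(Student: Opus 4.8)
The plan is to convert the boundary-value problem $\mathcal{D}$ into a first-order system with an integral normalization, and then prove uniqueness by an energy (integration-by-parts) identity, the only genuine difficulty being the simultaneous determination of the constant $c$. First I would record the qualitative features forced by a solution. By Proposition \ref{prop:equiv} any solution $\psi$ is $C^2$, and since $f_\mu>0$, $H'>0$ and $\mathcal{N}>0$, equation $(1)$ forces $c>0$ and $\psi''(s)>0$ for all $s$; hence $\psi$ is strictly convex and, by $(2)$–$(3)$, $\psi'$ is a strictly increasing $C^1$ bijection from $\mathbb{R}$ onto $(0,1)$.

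Next I change variables via the Fenchel transform. Writing $\ell=\psi'(s)$ and $g:=\psi^\sharp$, one has $g'(\ell)=s$ (so $s=g'(\ell)$ is the inverse of $\psi'$), $\psi''(s)=1/g''(\ell)$, and crucially $s\psi'(s)-\psi(s)=g(\ell)$, so the argument of $H'$ becomes simply $g(\ell)$. Equation $(1)$ then takes the divergence form
\[ \frac{d}{d\ell}\,\mathcal{G}\big(g'(\ell)\big)=\frac{1}{c}\,f_\mu(\ell)\,H'\big(g(\ell)\big),\qquad \ell\in(0,1), \]
where $\mathcal{G}(x):=\int_{-\infty}^{x}\mathcal{N}(t)\,dt$ is the standard Gaussian c.d.f.; conditions $(2)$–$(3)$ become the singular boundary conditions $\mathcal{G}(g'(0^+))=0$ and $\mathcal{G}(g'(1^-))=1$. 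Integrating the displayed identity over $(0,1)$ and using these two endpoint values yields
\[ c=\int_0^1 f_\mu(\ell)\,H'\big(g(\ell)\big)\,d\ell=E_\mu\big[H'(g(L))\big], \]
so $c$ is a functional of $g$ and uniqueness of $g$ will entail uniqueness of $c$.

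For fixed $c$, uniqueness follows cleanly. Given two solutions $g_1,g_2$ sharing the same $c$, set $D=g_1-g_2$ and integrate $D\cdot\frac{d}{d\ell}[\mathcal{G}(g_1')-\mathcal{G}(g_2')]$ over $(0,1)$ by parts. The boundary term vanishes because $\mathcal{G}(g_i')$ takes the common values $0$ and $1$ at the endpoints, leaving
\[ \int_0^1 (g_1'-g_2')\big(\mathcal{G}(g_1')-\mathcal{G}(g_2')\big)\,d\ell=-\frac1c\int_0^1 (g_1-g_2)\,f_\mu(\ell)\big(H'(g_1)-H'(g_2)\big)\,d\ell. \]
Since $\mathcal{G}$ is increasing the left side is $\ge 0$, and since $H'$ is strictly increasing and $f_\mu>0$ the right side is $\le 0$; hence both vanish, forcing $g_1'=g_2'$ a.e. and then, from the right-hand integrand, $g_1=g_2$.

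The main obstacle is removing the hypothesis $c_1=c_2$, i.e. showing two solutions cannot carry distinct constants; this is precisely where the structureless nonlinearity (arbitrary $H$ and $f_\mu$) blocks any appeal to classical boundary-value uniqueness theory. My approach is a monotone comparison: evaluating $(1)$ at an interior extremum of $g_1-g_2$, where $g_1'=g_2'$ so that the factors $f_\mu$ and $\mathcal{N}(g')$ cancel, the strict monotonicity of $H'$ shows that the sign of $\sup(g_1-g_2)$ forces a definite ordering of $c_1$ and $c_2$; this prevents $g_1-g_2$ from changing sign, so the two solutions are globally ordered with correspondingly ordered constants. It then remains to exclude a strict ordering, which is where the normalization $(4)$ together with the two \emph{separate} endpoint conditions $\mathcal{G}(g'(0^+))=0$, $\mathcal{G}(g'(1^-))=1$ (not merely their difference) must be fully exploited; rigorously controlling the behaviour of $g'$ at the singular endpoints $\ell=0,1$ is the delicate part. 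Once all solutions are shown to share the same $c$, the energy identity above finishes the proof.
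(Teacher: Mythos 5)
Your proposal splits the problem into two halves: (i) uniqueness of the solution for a \emph{fixed} constant $c$, and (ii) showing that two solutions cannot carry distinct constants $c_1\neq c_2$. Half (i) is handled correctly and rather elegantly: the Legendre change of variables $g=\psi^\sharp$ does put equation $(1)$ in divergence form, the identity $c=E_\mu[H'(g(L))]$ is right, and the integration-by-parts/monotonicity argument is a clean alternative to the paper's concluding appeal to Cauchy--Lipschitz (though note that the vanishing of your boundary term already requires $g_1-g_2$ to be bounded near $\ell=0,1$, i.e.\ that $\Lambda_i(s)=s\psi_i'(s)-\psi_i(s)$ has finite limits as $s\to\pm\infty$ --- a fact you use without proof and which the paper establishes inside the proof of Lemma \ref{Not_positive}).

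The genuine gap is half (ii), which you yourself flag as ``the main obstacle'' and ``the delicate part'' and then do not carry out. Your monotone-comparison sketch evaluates the equation at an interior extremum of $g_1-g_2$, but $g_1-g_2$ lives on the open interval $(0,1)$ with singular behaviour of $g'$ at both endpoints, so nothing guarantees that its supremum or infimum is attained in the interior; and even granting the resulting global ordering of the two solutions, the step that excludes a \emph{strict} ordering (using condition $(4)$ and the two separate endpoint conditions) is only announced, not proved. This is exactly the content that occupies almost all of the paper's argument: Lemmas \ref{Not_positive}, \ref{lemma:dual_result} and \ref{all_equal_zero} perform a two-sided sign analysis of $\theta'=\psi_1'-\psi_2'$ around a zero $s_0$ of $\theta$ (whose existence comes from $(4)$), with a separate and nontrivial treatment of the case where the first critical point of $\theta$ escapes to infinity --- requiring the finite limits of $\Lambda_i$ and a strict comparison of those limits --- before concluding $\theta(s_0)=\theta'(s_0)=\theta''(s_0)=0$ and hence $c_1=c_2$ from equation $(1)$. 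Until you supply an argument of comparable substance for the endpoint behaviour and the exclusion of a strict ordering, the proof is not complete.
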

This Theorem is proved in Annex \ref{annexe:unique_solution}.

\begin{corollary}\label{unique_nu}
There exists a unique measure $\nu$ such that $T_{\overline{\lambda}_{\infty}}(\nu)=\overline{\lambda}_{\infty}$ where $\overline{\lambda}_{\infty}=\mathcal{N}(0,1)$
\end{corollary}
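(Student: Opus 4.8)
The plan is to split the statement into existence and uniqueness, each of which reduces to results already established. For existence, I would recall that Corollary \ref{from_nu_to_equilibrium} guarantees, for every $n$, a measure $\nu_n$ solving $T_{\overline{\lambda}_n}(\nu_n)=\overline{\lambda}_n$. Lemma \ref{lemma:accumulation_point} shows this sequence is relatively compact in $(P_2(\mathbb{R}),W_2)$, so it admits a subsequence converging in $W_2$ to some $\nu$; Corollary \ref{cor:accumulation_point} then yields at once $T_{\overline{\lambda}_{\infty}}(\nu)=\overline{\lambda}_{\infty}$. This produces at least one solution, so only uniqueness remains to be argued.

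For uniqueness, I would exploit the bridge to the differential system $\mathcal{D}$. Suppose $\nu_1$ and $\nu_2$ both satisfy $T_{\overline{\lambda}_{\infty}}(\nu_i)=\overline{\lambda}_{\infty}$. By Proposition \ref{prop:equiv}, each pair $(\psi_i,c_i):=(\Psi_{\nu_i,\overline{\lambda}_{\infty}},\,1/\alpha_{\nu_i,\overline{\lambda}_{\infty}})$ is a $C^2$ solution of $\mathcal{D}$. Theorem \ref{Thm_uniqueness} then forces $(\psi_1,c_1)=(\psi_2,c_2)$; in particular $\Psi_{\nu_1,\overline{\lambda}_{\infty}}=\Psi_{\nu_2,\overline{\lambda}_{\infty}}$.

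It remains to argue that the map $\nu \mapsto \Psi_{\nu,\overline{\lambda}_{\infty}}$ is injective, which is where the explicit structure of $\Psi_{\nu,\lambda}$ enters. By Definition \ref{def:Psi} together with (\ref{def:Gamma}), one has $\Psi_{\nu,\overline{\lambda}_{\infty}}' = \Gamma_{\nu}' = \gamma_{\nu} = F_{\mu}^{-1}\circ F_{\nu}$. Hence from $\psi_1=\psi_2$ I get $\gamma_{\nu_1}=\gamma_{\nu_2}$. Applying $F_{\mu}$ and using that under assumption \textbf{A1} the map $F_{\mu}$ is a continuous strictly increasing bijection of $[0,1]$ (so $F_{\mu}\circ F_{\mu}^{-1}$ is the identity on $[0,1]$, the range of $F_{\nu}$), I obtain $F_{\nu_1}=F_{\mu}\circ\gamma_{\nu_1}=F_{\mu}\circ\gamma_{\nu_2}=F_{\nu_2}$. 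Equality of the cumulative distribution functions gives $\nu_1=\nu_2$, which completes the uniqueness and hence the corollary.

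I expect the only delicate point to be this last recovery of $\nu$ from $\psi'$: one must verify that differentiating $\Psi_{\nu,\overline{\lambda}_{\infty}}$ genuinely returns $\gamma_{\nu}$ and that $F_{\mu}\circ F_{\mu}^{-1}$ acts as the identity on the relevant range, both of which rest on the non-atomicity and strict monotonicity of $F_{\mu}$ guaranteed by \textbf{A1}. Everything else is a direct assembly of Lemma \ref{lemma:accumulation_point}, Corollary \ref{cor:accumulation_point}, Proposition \ref{prop:equiv} and Theorem \ref{Thm_uniqueness}.
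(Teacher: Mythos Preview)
Your proposal is correct and the uniqueness argument matches the paper's proof essentially line for line: pass to the differential system $\mathcal{D}$ via Proposition~\ref{prop:equiv}, invoke Theorem~\ref{Thm_uniqueness} to get $\Psi_{\nu_1,\overline{\lambda}_{\infty}}=\Psi_{\nu_2,\overline{\lambda}_{\infty}}$, differentiate to obtain $\gamma_{\nu_1}=\gamma_{\nu_2}$, and use injectivity of $F_\mu$ to conclude $F_{\nu_1}=F_{\nu_2}$. The only difference is that you additionally spell out an existence argument (via Lemma~\ref{lemma:accumulation_point} and Corollary~\ref{cor:accumulation_point}), which the paper's own proof of this corollary leaves implicit; this makes your version slightly more self-contained but is otherwise the same argument.
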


\begin{proof}
If $\nu_1$ and $\nu_2$ are two solutions of $T_{\overline{\lambda}_{\infty}}(\nu)=\overline{\lambda}_{\infty}$, then the pairs $(\Psi_{\nu_i,\overline{\lambda}_{\infty}},\frac{1}{\alpha_{\nu_i,\overline{\lambda}_{\infty}}})$ for $i=1,2$ would be solutions of the system $\mathcal{D}$ according to  Proposition \ref{prop:equiv}. As a result of Theorem \ref{Thm_uniqueness}: $\Psi_{\nu_1,\overline{\lambda}_{\infty}}=\Psi_{\nu_2,\overline{\lambda}_{\infty}}$. Thus the derivatives of these functions also coincide: $\gamma_{\nu_1}=\gamma_{\nu_2}$ where $\gamma_{\nu_i}$ are defined in Definition \ref{def:Gamma}. Since $F_{\mu}$ is one-to-one, this implies that $F_{\nu_1}=F_{\nu_2}$ and thus $\nu_1=\nu_2$. 
\end{proof}



We are now ready to prove the main result of this section:


\begin{corollary}\label{cor:convergence_nu_n}
The sequence $(\nu_n)$ converges to the unique solution $\nu$ of $T_{\overline{\lambda}_{\infty}}(\nu)=\overline{\lambda}_{\infty}$.
\end{corollary}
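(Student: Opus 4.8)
The plan is to deduce convergence of the whole sequence from the two facts already established: relative compactness (Lemma \ref{lemma:accumulation_point}) and uniqueness of the accumulation point (Corollary \ref{cor:accumulation_point} together with Corollary \ref{unique_nu}). All the substantive analysis has been done, so what remains is the standard soft topological argument that "relatively compact $+$ unique accumulation point $\Rightarrow$ convergence" in the metric space $(P_2(\mathbb{R}),W_2)$.

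Concretely, I would argue via the subsequence criterion for convergence in a metric space: to prove $W_2(\nu_n,\nu)\to 0$ it suffices to show that every subsequence of $(\nu_n)$ admits a further subsequence that converges in $W_2$ to the same limit $\nu$. So I would fix an arbitrary subsequence $(\nu_{n(k)})$. By Lemma \ref{lemma:accumulation_point} the sequence $(\nu_n)$ is relatively compact in $P_2(\mathbb{R})$, hence $(\nu_{n(k)})$ has a further subsequence $(\nu_{n(k(j))})$ converging in $W_2$ to some limit $\nu^{\ast}$.

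Next I would identify this limit. Any $W_2$-limit of a subsequence of $(\nu_n)$ is, by definition, an accumulation point of $(\nu_n)$, so Corollary \ref{cor:accumulation_point} gives $T_{\overline{\lambda}_{\infty}}(\nu^{\ast})=\overline{\lambda}_{\infty}$. But Corollary \ref{unique_nu} states that this equation has exactly one solution $\nu$, whence $\nu^{\ast}=\nu$. Thus every subsequence of $(\nu_n)$ has a further subsequence converging in $W_2$ to the fixed measure $\nu$, and the subsequence criterion delivers $W_2(\nu_n,\nu)\to 0$, which is the claimed convergence.

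I do not expect any genuine obstacle here. The only point that requires care is to run the argument at the level of \emph{every} subsequence rather than merely extracting a single convergent subsequence: the latter would only reproduce the existence of an accumulation point, whereas feeding the uniqueness of Corollary \ref{unique_nu} into every extracted limit is exactly what upgrades relative compactness to convergence of the full sequence. One should also keep the topology fixed as the $W_2$ topology throughout, so that Lemma \ref{lemma:accumulation_point} (relative compactness in $W_2$), Corollary \ref{cor:accumulation_point} (continuity of $T$ used through $W_2$-limits) and the conclusion all refer to the same notion of convergence.
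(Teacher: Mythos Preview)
Your argument is correct and coincides with the paper's own proof: both combine relative compactness (Lemma \ref{lemma:accumulation_point}), the fact that every accumulation point solves $T_{\overline{\lambda}_{\infty}}(\nu)=\overline{\lambda}_{\infty}$ (Corollary \ref{cor:accumulation_point}), and uniqueness (Corollary \ref{unique_nu}) via the standard ``relatively compact plus unique accumulation point implies convergence'' device. The paper phrases it contrapositively (a subsequence avoiding $\nu$ would still have an accumulation point, which must be $\nu$), while you use the equivalent subsequence-of-subsequence criterion; the content is identical.
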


\begin{proof}

Otherwise there would exists a subsequence $(\nu_{n(k)})$ that would not admit $\nu$ has accumulation point. This is impossible since this sequence would have an accumulation point $\tilde{\nu}$ according to Lemma \ref{lemma:accumulation_point} which should satisfy: $T_{\overline{\lambda}_{\infty}}(\tilde{\nu})=\overline{\lambda}_{\infty}$. According to Lemma \ref{unique_nu}, we would then have a contradiction: $\tilde{\nu}=\nu$. \end{proof}

\section{Convergence of the price process to a CMMV}
\label{section_CMMV}

Our analysis in this section applies to any sequence $(\Pi_n,X_n)$ of reduced equilibria in $G_n(\mu)$. We will focus on the  price process $(p_q^n)_{q=1,\dots,n}$ posted by player 2 in these equilibria. In a reduced equilibrium, the strategy $(p_q^n)_{q=1,\dots,n}$ of player 2 is pure (non random) but his moves depend on the past actions $\omega=(u_1,\dots,u_n)$ of player 1 which are random. The process $(p_q^n)_{q=1,\dots,n}$ is then a stochastic process. Its law when $\omega$ is $\Pi_{n | \omega}$-distributed is called the \textit{historical law}. On the other hand, the law of the price process when $\omega$ is $\lambda_n$-distributed is called the \textit{martingale equivalent law}.\\

We have seen in Theorem \ref{price_is_mart} that when $\omega \sim \lambda_n$, the process $(p_q^n)_{q=1,\dots,n}$ is a martingale. Furthermore $\lambda_n$ is the unique probability equivalent to $\Pi_{n | \omega}$ that has this property as stated in Theorem \ref{thm:unique_equiv_measure}.\\

Our purpose on this section is to analyze the asymptotics of the distribution of the prices process. In subsection \ref{subsection:convergence_Qn}, we analyze the limit of the martingale equivalent laws. In subsection \ref{subsection:convergence_Pn}, we analyze the asymptotics of the historical laws.

\subsection{Convergence of the martingale equivalent law.}\label{subsection:convergence_Qn}

Let $(\Pi_n,X_n)$ be a sequence of reduced equilibria in $G_n(\mu)$. We already know that $X_n=\Psi_{\nu_n,\overline{\lambda}_n}(S_n)$ and that $\overline{\Pi}_n=\Pi_{\nu_n}$ for a measure $\nu_n$ satisfying $T_{\overline{\lambda}_n}(\nu_n)=\overline{\lambda}_n$. According to formula (\ref{eq:price}), the price posted a period $q$ is:

\begin{equation}\label{equation:price}
p_q^n=\sqrt{n} E_{\lambda_n} [ u_n \Psi_{\nu_n,\overline{\lambda}_n}(S_n)  \mid u_1, \dots, u_{q-1} ]
\end{equation}

It is convenient to introduce the process  $Z^n: t \in ]0,1] \rightarrow Z_t^n := p^n_{\lfloor nt \rfloor}$ where $\lfloor x \rfloor$ is the largest integer less or equal to $x$. This is a continuous time process that jumps to the next value of $p_q^n$ at time $t=\frac{q}{n}$. We  analyze in this section the asymptotics of the law $Q_n$ of $Z^n$ when $(u_1,\dots,u_n)$ are endowed with the probability $\lambda_n$.

Let us introduce the notation $S^n_q=\frac{\sum_{i=1}^q u_i}{\sqrt{n}}$. Formula (\ref{equation:price}) can be written as:

\begin{align}
p_q^n=&\sqrt{n} E_{\lambda_n} [ u_n \Psi_{\nu_n,\overline{\lambda}_n}(S_{n-1}^n+\frac{u_n}{\sqrt{n}})  \mid u_1, \dots, u_{q-1} ]\nonumber \\
=&\frac{\sqrt{n}}{2} E_{\lambda_n} [  \Psi_{\nu_n,\overline{\lambda_n}}(S_{n-1}^n+\frac{1}{\sqrt{n}})-\Psi_{\nu_n,\overline{\lambda}_n}(S_{n-1}^n-\frac{1}{\sqrt{n}})  \mid u_1, \dots, u_{q-1} ] \label{equation:price_S_n} \\
=&\frac{\sqrt{n}}{2} E_{\lambda_n} [  \Psi_{\nu_n,\overline{\lambda}_n}(S_{n-1}^n+\frac{1}{\sqrt{n}})-\Psi_{\nu_n,\overline{\lambda}_n}(S_{n-1}^n-\frac{1}{\sqrt{n}})  \mid S_{q-1}^n ] \nonumber
\end{align}

Heuristically we have that $p_q^n \approx E_{\lambda_n} [  \Psi_{\nu_n,\overline{\lambda}_n}^{\prime}(S_{n-1}^n)  \mid S_{q-1}^n ]$.
From Corollary \ref{cor:convergence_nu_n}, we have that $\nu_n$ converges to $\nu$. Furthermore, according to Donkster theorem, $S^n_{\lfloor tn \rfloor }$ converges in law to $B_t$ where $B$ is a standard Brownian motion. We can heuristically expect therefore that $Z^n_t$ converges in law to $Z_t:=E[\Psi'_{\nu,\overline{\lambda}_{\infty}}(B_1)|B_t]$. This will be our focus in this section.

Observe next that $Z$ must be a CMMV. Indeed $\Psi'$ is an increasing function and we may apply the following lemma.

\begin{lemma}\label{le:X_is_a_CMMV}
If $B$ is a Brownian motion on a filtration $(\mathcal{F}_t)$ and if $g$ is an increasing function $\mathbb{R} \rightarrow \mathbb{R}$, then $X_t:=E[g(B_1) | \mathcal{F}_t]$ is a CMMV. \end{lemma}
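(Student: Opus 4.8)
The plan is to check directly the two defining properties of a CMMV in Definition \ref{def:CMMV}: that $X$ is a continuous martingale, and that $X_t=f(B_t,t)$ for a function $f$ increasing in its first variable, using the \emph{same} Brownian motion $B$ given in the hypothesis. Throughout I assume $g(B_1)\in L^1$, which holds in our application since $g=\Psi'_{\nu,\overline{\lambda}_\infty}$ is increasing with limits $0$ and $1$ at $\mp\infty$ (conditions (2)--(3) of the system $\mathcal{D}$), hence bounded.

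The martingale property is immediate: $X_t=E[g(B_1)\mid\mathcal{F}_t]$ is a closed (Doob) martingale, since for $s\le t$ the tower property gives $E[X_t\mid\mathcal{F}_s]=E[g(B_1)\mid\mathcal{F}_s]=X_s$. Next I would produce the representation $X_t=f(B_t,t)$. Writing $B_1=B_t+(B_1-B_t)$, the increment $B_1-B_t$ is independent of $\mathcal{F}_t$ and distributed as $\mathcal{N}(0,1-t)$, while $B_t$ is $\mathcal{F}_t$-measurable; integrating out the independent increment yields
\[
X_t=f(B_t,t),\qquad f(x,t):=E\!\left[g\!\left(x+\sqrt{1-t}\,N\right)\right],
\]
where $N\sim\mathcal{N}(0,1)$ and $f(x,1):=g(x)$. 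Monotonicity in the first variable is then free: if $x_1\le x_2$ then $g(x_1+y)\le g(x_2+y)$ for every $y$ because $g$ is increasing, so integrating against the law of $N$ gives $f(x_1,t)\le f(x_2,t)$.

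It remains to exhibit a continuous modification. For $t<1$ the function $f$ is the convolution of $g$ with the Gaussian heat kernel of variance $1-t$, hence $C^\infty$ (indeed it solves $\partial_t f+\tfrac12\partial_{xx}f=0$, consistent with the remark following Definition \ref{def:CMMV}); since $t\mapsto B_t$ is continuous, $t\mapsto f(B_t,t)$ is a continuous modification of $X$ on $[0,1)$. The delicate point, which I expect to be the main obstacle, is continuity at the terminal time $t=1$, where the smoothing degenerates and $g$ may itself be discontinuous. Here I would invoke the martingale convergence theorem: being closed by $g(B_1)\in L^1$, the martingale $X$ is uniformly integrable, and because the Brownian filtration is continuous ($\mathcal{F}_{1^-}=\mathcal{F}_1$ up to null sets), $X_t\to E[g(B_1)\mid\mathcal{F}_1]=g(B_1)=X_1$ almost surely as $t\uparrow 1$.

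Combining path-continuity on $[0,1)$ with this almost sure left limit at $1$ gives a modification of $X$ with continuous paths on all of $[0,1]$. Thus $X_t=f(B_t,t)$ is a continuous martingale with $f$ increasing in its first variable, i.e.\ a CMMV, which is exactly what is claimed.
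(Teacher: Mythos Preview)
Your proof is correct and follows essentially the same route as the paper: use the Markov property / independence of Brownian increments to write $X_t=f(B_t,t)$ with $f(x,t)=E[g(x+\sqrt{1-t}\,N)]$, and observe that Gaussian convolution preserves monotonicity and produces a smooth function. The paper's argument is terser and does not separately discuss path continuity at the terminal time $t=1$; your treatment of that point via uniform integrability and $\mathcal{F}_{1^-}=\mathcal{F}_1$ is a welcome bit of extra care, but the core idea is identical.
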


\begin{proof}
Due to the Markov property of the  Brownian motion, we have $X_t=E[g(B_1) | \mathcal{F}_t]=E[g(B_1) | B_t]=E[g(B_t + (B_1-B_t)) | B_t]$. We get therefore $X_t= f(B_t,t)$, where $f(x,t)=E_z[g(x+z\sqrt{1-t})]$, with $z \sim \mathcal{N}(0,1)$. Note that $f$ is the convolution of $g$ with a normal density kernel. This convolution preserves the class of increasing functions and $f$ is thus indeed increasing in $x$. It is further $C^2$ due to the smoothing property of the normal kernel.
\end{proof}

Let us now prove formally the convergence of $Z_t^n$ to $Z_t$. We start by reminding the definition of the weak convergence in finite distributions of a sequence of stochastic processes:

\begin{definition}\label{definition_process_convergence}
A sequence $(Z^n)$ of processes converges in finite dimensional distribution to a process $Z$ if and only if for all finite family $J$ of times $(t_1<\dots<t_k)$, the random vectors $(Z^n_t)_{t \in J}$ converge in law to the random vector $(Z_t)_{t \in J}$.
\end{definition}

Our main theorem is then:

\begin{theorem}\label{thm:equiv}
Under the equivalent martingale measure, $(Z^n)$ converges in finite dimensional distribution to the CMMV $Z$ where $Z_t:= E[\Psi_{\nu,\overline{\lambda}_{\infty}}'(B_1)|B_t]$
\end{theorem}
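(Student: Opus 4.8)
The plan is to establish the finite-dimensional convergence of $Z^n_t = p^n_{\lfloor nt\rfloor}$ under $\lambda_n$ to the process $Z_t = E[\Psi'_{\nu,\overline{\lambda}_\infty}(B_1)\mid B_t]$ by combining three ingredients: the convergence $\nu_n \to \nu$ from Corollary \ref{cor:convergence_nu_n}, the invariance principle (Donsker's theorem) for the random walk $S^n_{\lfloor nt\rfloor}$ under $\lambda_n$, and the regularity of the limiting function $\Psi'_{\nu,\overline{\lambda}_\infty}$. Since Lemma \ref{le:X_is_a_CMMV} already guarantees that the limit $Z$ is a CMMV once we know it has the stated form, the real content is the convergence statement itself.

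First I would make the heuristic identity $p^n_q \approx E_{\lambda_n}[\Psi'_{\nu_n,\overline{\lambda}_n}(S^n_{n-1})\mid S^n_{q-1}]$ rigorous. Starting from the exact expression (\ref{equation:price_S_n}), the difference quotient $\frac{\sqrt{n}}{2}(\Psi_{\nu_n,\overline{\lambda}_n}(s+\tfrac{1}{\sqrt n})-\Psi_{\nu_n,\overline{\lambda}_n}(s-\tfrac{1}{\sqrt n}))$ must be compared with $\Psi'_{\nu_n,\overline{\lambda}_n}(s)$. Because $\Psi_{\nu_n,\overline{\lambda}_n}$ is convex with derivative $\gamma_{\nu_n}$ bounded between $0$ and $1$ (its range is contained in the support $[0,1]$ of $\mu$, by condition \textbf{(C3)} and assumption \textbf{A1}), these difference quotients are uniformly bounded; controlling the approximation error requires equicontinuity of the family $\gamma_{\nu_n}=\Psi'_{\nu_n,\overline{\lambda}_n}$. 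Here I would use Proposition \ref{prop:equiv}, which shows the limiting $\psi=\Psi_{\nu,\overline{\lambda}_\infty}$ is $C^2$, together with the $W_2$-convergence $\nu_n\to\nu$, to deduce that $\gamma_{\nu_n}\to\gamma_\nu=\psi'$ locally uniformly; this upgrades the difference quotient to $\psi'(s)$ in the limit.

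Next I would pass to the limit in the conditional expectation. Fix times $t_1<\dots<t_k$ and write each $Z^n_{t_j}=E_{\lambda_n}[G_n(S^n_{n-1})\mid S^n_{\lfloor nt_j\rfloor}]$ with $G_n$ the symmetric difference quotient of $\Psi_{\nu_n,\overline{\lambda}_n}$. By Donsker's theorem the joint law of $(S^n_{\lfloor nt_1\rfloor},\dots,S^n_{\lfloor nt_k\rfloor},S^n_{n-1})$ under $\lambda_n$ converges to that of $(B_{t_1},\dots,B_{t_k},B_1)$. Because the conditional expectation against a bounded integrand that converges uniformly is continuous under weak convergence of the underlying laws, the vector $(Z^n_{t_1},\dots,Z^n_{t_k})$ converges in law to $(E[\psi'(B_1)\mid B_{t_1}],\dots,E[\psi'(B_1)\mid B_{t_k}])=(Z_{t_1},\dots,Z_{t_k})$, which is exactly finite-dimensional convergence in the sense of Definition \ref{definition_process_convergence}.

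The main obstacle I anticipate is the interchange of limits in the conditioning step: the conditioning variable $S^n_{\lfloor nt_j\rfloor}$ and the integrand $G_n(S^n_{n-1})$ both vary with $n$, and conditional expectation is not in general continuous under weak convergence (the conditioning $\sigma$-algebra can be delicate). I would handle this by exploiting the Markov structure, rewriting $Z^n_{t_j}$ as an explicit function $f_n(S^n_{\lfloor nt_j\rfloor})$ via the transition kernel of the random walk, showing these functions converge uniformly on compacts to the heat-kernel convolution $f(\cdot,t_j)$ appearing in Lemma \ref{le:X_is_a_CMMV}, and then invoking the continuous mapping theorem together with the uniform integrability guaranteed by the uniform bound $0\le\gamma_{\nu_n}\le 1$. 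This reduces the problematic conditional-expectation continuity to ordinary weak convergence of the marginals $S^n_{\lfloor nt_j\rfloor}\Rightarrow B_{t_j}$ composed with a uniformly convergent sequence of bounded continuous functions.
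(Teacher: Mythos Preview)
Your overall strategy is sound and the obstacle you identify---continuity of conditional expectations under weak convergence when both the conditioning variable and the integrand vary with $n$---is exactly the crux. However, the paper handles this by a genuinely different mechanism: rather than passing through Donsker's theorem and the continuous mapping theorem, it uses a Skorokhod embedding. One constructs, on a single probability space carrying a Brownian motion $B$, stopping times $\tau^n_q$ with $|B_{\tau^n_q}-B_{\tau^n_{q-1}}|=1/\sqrt n$, so that $\tilde u_q:=\sqrt n(B_{\tau^n_q}-B_{\tau^n_{q-1}})$ is an i.i.d.\ copy of $(u_q)$ under $\lambda_n$ and $B_{\tau^n_q}$ has the law of $S^n_q$. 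One then builds $\tilde Z^n$ with the same law as $Z^n$ and proves the strong estimate $\sup_t\|\tilde Z^n_t-Z_t\|_{L^2}\to 0$, which immediately yields $W_2$-convergence of all finite-dimensional marginals. The two key technical inputs are (i) $\tau^n_{\lfloor nt\rfloor}\to t$ in $L^2$ (a Burkholder-type estimate), and (ii) a lemma saying that if convex $\Psi_n\to\Psi$ uniformly with $\Psi\in C^1$, then elements of $\partial\Psi_n(x_n)$ converge to $\Psi'(x)$ whenever $x_n\to x$.

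Your alternative route via $Z^n_{t_j}=f_{n,j}(S^n_{\lfloor nt_j\rfloor})$ and uniform convergence of the $f_{n,j}$ to heat-kernel convolutions is viable in principle, but the step ``showing these functions converge uniformly on compacts'' hides a local central limit theorem for the random-walk transition kernels that you would have to supply; this is nontrivial and not available elsewhere in the paper. By contrast, the Skorokhod coupling sidesteps any local-CLT argument entirely and, importantly, delivers $L^2$ control on a common probability space. That stronger mode of convergence is not merely aesthetic: it is reused verbatim in the subsequent subsection to prove convergence of the \emph{historical} law (Theorem \ref{thm:histo}), where one multiplies by the density $\tilde y_n$ and needs almost-sure subsequential limits on the same space. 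Your weak-convergence approach would not directly furnish those ingredients.
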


\begin{proof}
We will prove this convergence by proving that the $W_2(\rho_n,\rho) \rightarrow 0$ when $\rho_n$ and $\rho$ are respectively the laws of the vectors $(Z^n_t)_{t \in J}$ and $(Z_t)_{t \in J}$. We use "Skorokhod representation" techniques to get that result. Let $(\tilde{\Omega},\mathcal{A},\tilde{P})$ be a probability space on which $B$ is a Brownian motion. In this section, unless otherwise stated, all expectations on $\tilde{\Omega}$ are taken with respect to $\tilde{P}$. $Z_t=E[\Psi'_{\nu,\overline{\lambda}_{\infty}}(B_1)|B_t]$ can be considered as a process on that space. \\

We will introduce hereafter a sequence of processes $\tilde{Z}^n$ defined on $\tilde{\Omega}$ such that:
\begin{equation}\label{eq:2conditions}
\begin{array}{ll}
1/ & \mbox{$\tilde{Z^n}$ and $Z^n$ have the same laws.}\\
2/ & \mbox{$\displaystyle \sup_{t} \|\tilde{Z}^n_t-Z_t \|_{L^2} \rightarrow 0$.} \\
\end{array}
\end{equation}

Theorem will then be proved. Indeed, $(\tilde{Z}^n,Z)$ is a pair of processes on the same probability space $(\tilde{\Omega},\mathcal{A},\tilde{P})$. The joint joint law of $(\tilde{Z}^n_t,Z_t)_{t \in J}$ is a probability distribution on $\mathbb{R}^{2|J|}$ with respective marginals $\rho_n$ and $\rho$. Therefore:

$$W_2(\rho_n,\rho) \leq \sqrt{E \left[\sum_{t \in J} |\tilde{Z}^n_t-Z_t|^2\right]}=\sqrt{ \left[\sum_{t \in J} \|\tilde{Z}^n_t-Z_t\|_{L^2}^2\right]}  \leq \sqrt{|J|}\displaystyle \sup_{t \in ]0,1]} \|\tilde{Z}^n_t-Z_t \|_{L^2} \rightarrow 0$$

In order to construct those random variables $\tilde{Z}^n$, it is convenient to apply the embedding techniques already used in \citet{de2010}. Let $\mathcal{F}_t$ denote the natural filtration of the Brownian motion $B$. Define $\tau^n_0=0$ and, recursively, $\tau^n_{q+1}$ as the first time $t>\tau^n_q$ such that $| B_t-B_{\tau^n_q} |=\frac{1}{\sqrt{n}}$. Since the one-dimensional Brownian motion is a recurrent process $\tau^n_{q}<\infty$ almost surely and clearly $\tilde{u}_q := \sqrt{n} (B_{\tau^n_{q}}-B_{\tau^n_{q-1}})$ has the same distribution as $u_q$ under $\lambda_n$. Indeed $\tilde{u}_q \in \{ -1,+1 \}$ and $E[\tilde{u}_q]=0$. They are furthermore independent since the increments $B_{\tau^n_{q}}-B_{\tau^n_{q-1}}$ are independent of $\mathcal{F}_{\tau^n_{q-1}}$.

Therefore, $B_{\tau^n_q}=\sum_{j=1}^{q} (B_{\tau^n_{j}}-B_{\tau^n_{j-1}})=\frac{1}{\sqrt{n}}\sum_{j=1}^{q} \tilde{u}_j$ has the same distribution as $S^n_q$ under $\lambda_n$. We set:

\begin{equation}\label{eq:def_z_n^n}
\tilde{z}_n^n:=\frac{\sqrt{n}}{2}\left(\Psi_n(B_{\tau^n_{n-1}} + \frac{1}{\sqrt{n}})-\Psi_n(B_{\tau^n_{n-1}} - \frac{1}{\sqrt{n}})\right)
\end{equation}

$\tilde{z}_n$ has then the same distribution as $p_{n}^n$. Furthermore, if we define:

$$\tilde{z}^n_q:= E[\tilde{z}_n|\tilde{u}_1,\dots,\tilde{u}_{q-1}]=E[\tilde{z}_n| \mathcal{F}_{\tau^n_{q-1}}]$$ the process $(\tilde{z}^n_q)_{q=1,\dots,n}$ has the same distribution as the process $(p^n_q)_{q=1,\dots,n}$ under $\lambda_n$, as it follows from equations (\ref{equation:price_S_n}) and (\ref{eq:def_z_n^n}). We next define:

\[ \tilde{Z}^n_t:=\tilde{z}^n_{\lfloor nt \rfloor}
\]

It is then clear that $\tilde{Z}^n$ and $Z^n$ have the same laws which claim 1 in (\ref{eq:2conditions}). We next prove claim 2:
\begin{align*}
\| \tilde{Z}^n_t - Z_t \|_{L^2}=&\|  E[\tilde{z}_n^n| \mathcal{F}_{\tau^n_{\lfloor nt \rfloor-1}}]-Z_t \|_{L^2}\\
 \leq & \|  E[\tilde{z}_n^n| \mathcal{F}_{\tau^n_{\lfloor nt \rfloor-1}}]-Z_{\tau^n_{\lfloor nt \rfloor-1}} \|_{L^2}+
\|Z_{\tau^n_{\lfloor nt \rfloor-1}}-Z_t \|_{L^2}\\
=&\|  E[\tilde{z}_n^n-Z_1| \mathcal{F}_{\tau^n_{\lfloor nt \rfloor-1}}]\|_{L^2}+
\|Z_{\tau^n_{\lfloor nt \rfloor-1}}-Z_t \|_{L^2}\\
\leq& \|\tilde{z}_n^n-Z_1]\|_{L^2}+
\|Z_{\tau^n_{\lfloor nt \rfloor-1}}-Z_t \|_{L^2}
\end{align*}  

We next argue that both terms of the right hand side go to zero as $n$ goes to $\infty$.

Let us start with the second one. First observe that all the martingales on the Brownian filtration are continuous (see \citet{revuz_yor}, theorem V.3.5), and $Z_t= E[Z_1|\mathcal{F}_t]$ in particular.  
If $\|Z_{\tau^n_{\lfloor nt \rfloor-1}}-Z_t \|_{L^2}$ was not converging to zero, there would exists a subsequence $n(k)$ such that 
$(Z_{\tau^{n(k)}_{\lfloor n(k)t \rfloor-1}})$ does not admit $Z_t$ as accumulation point in $L^2$. We prove in Lemma \ref{lemma:tau} that $\tau^n_{\lfloor nt \rfloor -1} \rightarrow  t$ in $L^2$. The sequence $n(k)$ can thus be selected such that $\tau^{n(k)}_{\lfloor n(k)t \rfloor -1} \rightarrow  t$ almost surely. By continuity we get then that $(Z_{\tau^{n(k)}_{\lfloor n(k)t \rfloor-1}})$ converges almost surely to $Z_1$ and the convergence also holds in $L^2$ since $(Z_t)$ is uniformly integrable ($Z_1= \Psi_{\nu,\overline{\lambda}_{\infty}}'(B_1)=F_{\mu}^1(F_{\nu}(B_1))$ is bounded). This contradicts the definition of the subsequence $n(k)$.

Assume now that the first term does not converge to zero. There would exist a subsequence $n(k)$ such that $\tilde{z}_{n(k)}^{n(k)}$ does not have $Z_1$ as accumulation point in $L^2$. 

Setting $a_{n(k)}:=B_{\tau^{n(k)}_{{n(k)}-1}} - \frac{1}{\sqrt{{n(k)}}}$ and $b_{n(k)}:=B_{\tau^{n(k)}_{{n(k)}-1}} + \frac{1}{\sqrt{{n(k)}}}$, equation (\ref{eq:def_z_n^n}) becomes $\tilde{z}_{n(k)}^{n(k)}=\frac{\Psi_{n(k)}(b_{n(k)})-\Psi_{n(k)}(a_{n(k)})}{b_{n(k)}-a_{n(k)}}$. With the mean value theorem, we conclude that there exists $x_{n(k)} \in [a_{n(k)},b_{n(k)}]$ such that $\tilde{z}_{n(k)}^{n(k)} \in \partial \Psi_{n(k)}(x_{n(k)})$.

But it follows from Lemma \ref{lemma:tau} here below that $B_{\tau^{n(k)}_{n(k)-1}}$ converges in $L^2$ to $B_1$. The subsequence $n(k)$ can thus be selected in such a way that $B_{\tau^{n(k)}_{{n(k)}-1}}$ converges to $B_1$ almost surely and so does $x_{n(k)}$. Since $\Psi_n=\Psi_{\nu_n,\overline{\lambda}_{n}}$  converges uniformly to $\Psi_{\nu,\overline{\lambda}_{\infty}}$ which is $C^2$, we may apply the forecoming Lemma \ref{prelim_lemma} to conclude that $\tilde{z}_n^n$ converges almost surely to $ \Psi_n'(B_1)=Z_1$. Since $\tilde{z}_n^n$ belongs to $\partial \Psi_n(k)(x_{n(k)}) \subset [0,1]$, it follows from the Lebesgue dominated convergence theorem that $\tilde{z}_{n(k)}^{n(k)}$ converges to $Z_1$ in $ L^2$, in contradiction with the definition of the subsequence $n(k)$. Hence, as announced both terms go to zero. Therefore both claims in (\ref{eq:2conditions}) are satisfied by the process $\tilde{Z}^n$ and Theorem \ref{thm:equiv} is thus proved. \end{proof}

We next prove the announced lemma.

\begin{lemma}\label{lemma:tau}~~\\
Claim 1: $\tau^n_{\lfloor nt \rfloor} \underset{L^2}{\longrightarrow}  t$\\ Claim 2: $B_{\tau^n_{n-1}} \underset{L^2}{\longrightarrow} B_1$
\end{lemma}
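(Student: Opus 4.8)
The plan is to reduce both claims to the analysis of the increments $\Delta^n_q := \tau^n_q - \tau^n_{q-1}$. By the strong Markov property of $B$, these increments are i.i.d., each distributed as the first exit time $\sigma$ of a standard Brownian motion from the interval $(-\tfrac{1}{\sqrt n},\tfrac{1}{\sqrt n})$. First I would pin down the first two moments of $\sigma$ by optional stopping. Applying it to the martingale $B_t^2-t$ at $\sigma$, and using $B_\sigma^2=\tfrac1n$, gives $E[\sigma]=\tfrac1n$. Applying it to the martingale $B_t^4-6tB_t^2+3t^2$ (whose martingale property follows from It\^o's formula, since its It\^o differential is the drift-free $(4B_t^3-12tB_t)\,dB_t$) and using $B_\sigma^4=\tfrac1{n^2}$ gives $\tfrac1{n^2}-6\,\tfrac1{n^2}+3E[\sigma^2]=0$, hence $E[\sigma^2]=\tfrac{5}{3n^2}$ and $\mathrm{Var}(\Delta^n_q)=\tfrac{2}{3n^2}$.

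For Claim 1, I would write $\tau^n_{\lfloor nt\rfloor}=\sum_{q=1}^{\lfloor nt\rfloor}\Delta^n_q$, so that $E[\tau^n_{\lfloor nt\rfloor}]=\tfrac{\lfloor nt\rfloor}{n}$ and, by independence, $\mathrm{Var}(\tau^n_{\lfloor nt\rfloor})=\tfrac{2\lfloor nt\rfloor}{3n^2}$. The bias–variance decomposition then yields
\[
\|\tau^n_{\lfloor nt\rfloor}-t\|_{L^2}^2=\Big(\tfrac{\lfloor nt\rfloor}{n}-t\Big)^2+\tfrac{2\lfloor nt\rfloor}{3n^2}\le \tfrac{1}{n^2}+\tfrac{2}{3n},
\]
using $\lfloor nt\rfloor\le n$ for $t\in]0,1]$; the right-hand side tends to $0$, which proves Claim 1. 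The very same estimate with the index $n-1$ in place of $\lfloor nt\rfloor$ gives $\tau^n_{n-1}\to 1$ in $L^2$, a fact I will invoke for Claim 2.

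For Claim 2, the key tool is the identity $E[(B_{T_1}-B_{T_2})^2]=E[|T_1-T_2|]$, valid for any two stopping times of finite expectation. Setting $S:=T_1\wedge T_2$ and $T:=T_1\vee T_2$, one has $(B_{T_1}-B_{T_2})^2=(B_T-B_S)^2$ pointwise; optional sampling of the martingale $B$ gives $E[B_SB_T]=E[B_S^2]$, while optional stopping of $B_t^2-t$ gives $E[B_T^2]-E[B_S^2]=E[T-S]$, and combining the two produces the identity. Applying it with $T_1=\tau^n_{n-1}$ and $T_2=1$ gives
\[
\|B_{\tau^n_{n-1}}-B_1\|_{L^2}^2=E[|\tau^n_{n-1}-1|]\le \|\tau^n_{n-1}-1\|_{L^2},
\]
which tends to $0$ by the end of the previous paragraph, proving Claim 2.

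The main obstacle is entirely technical: justifying the optional-stopping identities for stopping times that are not bounded, namely $\sigma$, $\tau^n_{n-1}$, and $\tau^n_{n-1}\vee 1$. Each has finite expectation, so I would first stop at the truncated times (e.g. $\sigma\wedge k$) and let $k\to\infty$. In the moment computation for $\sigma$ the path is dominated ($|B_t|\le\tfrac1{\sqrt n}$ for $t\le\sigma$), so the fourth- and second-moment terms pass to the limit by dominated convergence while $E[(\sigma\wedge k)^2]\uparrow E[\sigma^2]$ by monotone convergence, which incidentally shows $E[\sigma^2]<\infty$ with no circularity. For the identity in Claim 2 the cleanest route is simply to invoke the standard fact (see \citet{revuz_yor}) that $E[T]<\infty$ implies $E[B_T]=0$ and $E[B_T^2]=E[T]$ for Brownian motion, which makes both optional-stopping steps immediate.
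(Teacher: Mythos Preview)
Your proof is correct and follows the same overall architecture as the paper: decompose $\tau^n_q$ into i.i.d.\ increments, bound the variance of each increment to control $\|\tau^n_{\lfloor nt\rfloor}-t\|_{L^2}$, and then use the identity $E[(B_{T_1}-B_{T_2})^2]=E[|T_1-T_2|]$ for Claim~2. The one substantive difference is in how you bound the second moment of the exit time. The paper invokes Burkholder's inequality as a black box to get $E[(\tau^n_{q+1}-\tau^n_q)^2]\le C\,E[|B_{\tau^n_{q+1}}-B_{\tau^n_q}|^4]=C/n^2$, whereas you compute the moment exactly via optional stopping on the quartic martingale $B_t^4-6tB_t^2+3t^2$, obtaining $E[\sigma^2]=\tfrac{5}{3n^2}$ and hence the sharp variance $\tfrac{2}{3n^2}$. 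Your route is more elementary and self-contained (no Burkholder needed) and gives explicit constants; the paper's route is shorter on the page since it cites a standard inequality. Your discussion of the optional-stopping justifications is also more careful than the paper's, which simply states the identities as ``well known.''
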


\begin{proof}
As well known:
 \[ E(\tau_q^n)=E(B_{\tau_q^n}^2)=E_{\lambda_n}( (S^n_q)^2  )= \frac{q}{n} \]

On the other hand, $\tau^n_{q+1}-\tau^n_{q}$ is independent of $\mathcal{F}_{\tau^n_q}$. Therefore, $\tau^n_q= \sum_{i=0}^{q-1} \tau^n_{i+1}-\tau^n_{i}$ is a sum of independent random variables with expectation $\frac{1}{n}$. 

Moreover we have $Var(\tau^n_{\lfloor nt \rfloor}) \rightarrow 0$ when $n \rightarrow \infty$.

Indeed:
\[  Var(\tau^n_{q+1}-\tau^n_{q}) \leq E((\tau^n_{q+1}-\tau^n_q)^2) \leq C E [ | B_{\tau^n_{q+1} }-B_{\tau^n_q} |^4 ]= C (\frac{1}{\sqrt{n}})^4     =\frac{C}{n^2}  \]

where C is the Burkholder's constant for $p=4$ (see Theorem IV.4.1 in \citet{revuz_yor}).

Therefore:

\[ Var(\tau^n_q) = \sum_{i=0}^{q-1} Var(\tau^n_{i+1}-\tau^n_{i}) \leq  \frac{q C}{n^2}  \leq  \frac{C}{n} \]

And:
\begin{equation}\label{eq:maj_burkh}
 \| \tau_q^n - \frac{q}{n}  \|_{L^2} ^2 = \| \tau_q^n - E[\tau^n_q]  \|_{L^2} ^2 = Var(\tau^n_q) \leq \frac{C}{n}
\end{equation}

Replacing $q$ by $\lfloor nt \rfloor$, we get claim 1 as announced.

It is also well known that $E[(B_{\tau^n_{n-1}}- B_1)^2]=
E[|\tau^n_{n-1}-1|]$. With equation (\ref{eq:maj_burkh}) we get:

$\|B_{\tau^n_{n-1}}- B_1\|_{L^2}^2=
\|\tau^n_{n-1}-1\|_{L^1} \leq \|\tau^n_{n-1}-\frac{n-1}{n}   \|_{L^2}+\frac{1}{n}\leq \frac{C+1}{n} \rightarrow 0$. 
Claim 2 is thus also proved.
\end{proof}

\begin{lemma}\label{prelim_lemma}
Let $(\Psi_n)$ be a sequence of convex functions that converges uniformly to a $C^1$ function $\Psi$. Let $(x_n)$ and $(z_n)$ be two real sequences such that:\\
(1) $x_n$ converges to $x$.\\
(2) for all $n$: $z_n \in \partial\Psi_n(x_n)$.\\
Then $z_n$ converges to $\Psi'(x)$.
\end{lemma}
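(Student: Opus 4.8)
The plan is to exploit the subgradient inequality that defines $z_n$ together with the uniform convergence $\Psi_n \to \Psi$, passing to the limit in two variables — the index $n$ and an increment $h$ — in the correct order. The only structural inputs I need are convexity of each $\Psi_n$ (so that $z_n \in \partial\Psi_n(x_n)$ produces a two-sided affine minorant) and the $C^1$ regularity of the limit $\Psi$ (so that its left and right difference quotients at $x$ both converge to $\Psi'(x)$).

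First I would fix $h > 0$ and write out the defining inequality for the subgradient (equation~(\ref{def:subgrad})) at the two points $y = x_n + h$ and $y = x_n - h$:
\[
\Psi_n(x_n + h) \ge \Psi_n(x_n) + z_n h, \qquad \Psi_n(x_n - h) \ge \Psi_n(x_n) - z_n h.
\]
Rearranging gives the two-sided bound
\[
\frac{\Psi_n(x_n) - \Psi_n(x_n - h)}{h} \;\le\; z_n \;\le\; \frac{\Psi_n(x_n + h) - \Psi_n(x_n)}{h}.
\]

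Next I would send $n \to \infty$ with $h$ held fixed. The key elementary fact is that uniform convergence of $\Psi_n$ to the continuous function $\Psi$, combined with $x_n \to x$, forces $\Psi_n(x_n + c) \to \Psi(x + c)$ for any fixed constant $c$, since $|\Psi_n(x_n + c) - \Psi(x + c)| \le \|\Psi_n - \Psi\|_\infty + |\Psi(x_n + c) - \Psi(x + c)|$ and both terms vanish. Taking $\limsup_n$ of the upper bound and $\liminf_n$ of the lower bound then yields
\[
\frac{\Psi(x) - \Psi(x - h)}{h} \;\le\; \liminf_n z_n \;\le\; \limsup_n z_n \;\le\; \frac{\Psi(x + h) - \Psi(x)}{h}.
\]
Finally I would let $h \to 0^+$: since $\Psi$ is $C^1$, both extreme difference quotients tend to $\Psi'(x)$, the sandwich collapses to $\limsup_n z_n \le \Psi'(x) \le \liminf_n z_n$, and hence $z_n \to \Psi'(x)$.

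The one point that will demand care is the order of the two limits. For a fixed increment $h$ the bound on $z_n$ is only as tight as $h$ permits, so one must first let $n \to \infty$ for each fixed $h$ and only afterwards let $h \to 0$; interchanging them would destroy the use of uniform convergence. Note also that no separate argument for boundedness of $(z_n)$ is needed, because the displayed difference-quotient estimates already trap $\limsup_n z_n$ and $\liminf_n z_n$ between finite quantities for every $h > 0$.
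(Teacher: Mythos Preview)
Your proof is correct. The approach differs from the paper's: instead of your two-sided difference-quotient sandwich followed by the double limit $n\to\infty$ then $h\to 0^+$, the paper first bounds $|z_n|$ (using the subgradient inequality at $x_n\pm 1$ together with $\|\Psi_n-\Psi\|_\infty$), then invokes compactness to extract convergent subsequences, and finally shows that every accumulation point $z$ must satisfy the subgradient inequality $\Psi(y)\ge\Psi(x)+z(y-x)$ for all $y$, forcing $z\in\partial\Psi(x)=\{\Psi'(x)\}$. Your route is more direct and avoids the subsequence extraction entirely, at the cost of keeping track of the auxiliary parameter $h$; the paper's route is the classical convex-analysis pattern of showing that subdifferentials are stable under uniform limits. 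Both are short and rely on exactly the same two ingredients (uniform convergence and $C^1$ limit), so neither buys real extra generality here.
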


\begin{proof}
Since $z_n \in \partial\Psi_n(x_n)$, we get with $u \in \{-1,+1\}$ that:
\[ \Psi(x_n+u)+\| \Psi-\Psi_n \|_{\infty} \geq \Psi_n(x_n+u) \geq \Psi_n(x_n)+u z_n \geq \Psi(x_n)-\| \Psi_n-\Psi\|_{\infty} + u z_n
\]
Therefore $ u z_n \leq \Psi(x_n+u) - \Psi(x_n) + 2 \| \Psi_n-\Psi\|_{\infty}$ and thus:

\[| z_n | \leq \displaystyle \max_{u \in \{-1,+1\}} \Psi(x_n+u) - \Psi(x_n) + 2 \| \Psi_n-\Psi\|_{\infty}\]
Since the right hand side is bounded, any subsequence of $(z_n)$ has an accumulation point. All these accumulation points must be in $\partial\Psi(x)$. Indeed, if a subsequence $(z_{n(k)})$ converges to $z$, we have for all $y$: $ \Psi_{n(k)}(y) \geq \Psi_{n(k)}(x_{n(k)})+z_{n(k)}(y-x_{n(k)})$. Letting $k$ go to infinity, we get then for all $y$: $\Psi(y) \geq \Psi(x)+z(y-x)$ and therefore $z \in \partial \Psi(x)= \{ \Psi'(x) \}$ since $\Psi$ is $C^1$.
All subsequence of $(z_n)$ has $\Psi'(x)$ as accumulation point, this is equivalent to the claim that $z_n$ converges to $\Psi'(x)$.
\end{proof}

\subsection{Convergence of the historical law}\label{subsection:convergence_Pn}

Let $(\Pi_n,X_n)$ be a sequence of reduced equilibria in $G_n(\mu)$. We already know that $X_n=\Psi_{\nu_n,\overline{\lambda}_n}(S_n^n)$ and that the marginal $\overline{\Pi}_n$
of $\Pi_n$ on $(L,S_n^n)$ coincides with $\Pi_{\nu_n}$ for a measure $\nu_n$ satisfying $T_{\overline{\lambda}_n}(\nu_n)=\overline{\lambda}_n$. We further know that $\nu_n$ converges to the unique solution $\nu$ of $T_{\overline{\lambda}_{\infty}}(\nu)=\overline{\lambda}_{\infty}$. Therefore, $\overline{\Pi}_n$ converges to $\Pi_{\nu}$ in $W_2$ distance. Our aim in this section is somehow to analyze the asymptotics of the law $\Pi_n$ of $(u_1,\dots,u_n,L)$, or more specifically, the law of the price process $p_q^n(u_1,\dots,u_{q-1})$ when $(u_1,\dots,u_n,L) \sim \Pi_n$. This law is called the historical law. Note that we can't speak of the convergence of $\Pi_n$ itself because the space on which probability $\Pi_n$ is defined depends on $n$. We will use the embedding technics introduced in the previous subsection.\\
 
Let $y_n(\omega_n)$ denote the density of $\frac{\partial\Pi_{n|\omega_n}}{\partial\lambda_n}$, so $y_n$ is a function of $\omega_n=(u_1,\dots,u_n)$. In the previous subsection, we created sequences $\tilde{S^n_q}=B_{\tau^n_q}$ and $\tilde{u}$ of random variables on $(\tilde{\Omega},\mathcal{A},\tilde{P})$ a probability space on which $B$ is a Brownian motion in such a way that $\tilde{S^n_q}$ and $\tilde{u}$ have the same distribution as $S_n$ and $u$ under $\lambda_n$.

Setting $\tilde{y}_n:=y_n(\tilde{u}_1,\dots,\tilde{u}_n)$, we infer that $\tilde{y}_n$ is a probability density on $(\tilde{\Omega},\mathcal{A},\tilde{P})$, and under the probability $\tilde{P}_n:=\tilde{y}_n . \tilde{P}$, the process $(\tilde{u}_1,\dots,\tilde{u}_n)$ is  $\Pi_{n | \omega_n}$-distributed.

We first prove the following lemma:

\begin{lemma}\label{lemma:y_n_to_y}
$\tilde{y}_n$ converges in $L^1(\tilde{P})$ to $\tilde{y}:=\frac{\beta}{\tilde{Y}}$ where $\tilde{Y}:=
H'(\Psi'_{\nu}(B_1)B_1-\Psi_{\nu}(B_1))$ and 
$\beta=\frac{1}{E_{\tilde{P}}[\frac{1}{\tilde{Y}}]}$
\end{lemma}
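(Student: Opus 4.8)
The plan is to push the density $y_n=\frac{\partial \Pi_{n\mid\omega}}{\partial \lambda_n}$ onto the Brownian space through the embedding $(\tilde u_q)$ of the previous subsection, and to identify its $L^1$-limit by combining the explicit first-order condition \textbf{(C4)} with the convergences already at hand, namely $\nu_n\to\nu$ (Corollary \ref{cor:convergence_nu_n}), the uniform convergence $\Psi_{\nu_n,\overline{\lambda}_n}\to\Psi_{\nu,\overline{\lambda}_{\infty}}$ to the $C^2$ limit of Proposition \ref{prop:equiv}, and $B_{\tau^n_n}\to B_1$ from Lemma \ref{lemma:tau}.

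First I would put $y_n$ into closed form. Writing $\Psi_n:=\Psi_{\nu_n,\overline{\lambda}_n}$ and $\alpha_n:=\alpha_{\nu_n,\overline{\lambda}_n}$, condition \textbf{(C4)} gives $\frac{d\lambda_n}{d\Pi^\star_{n\mid\omega}}=\alpha_n Y_n$ with $Y_n(\omega)=E_{\Pi_n}[H'(LS_n-\Psi_n(S_n))\mid\omega]$, hence $y_n=\frac{1}{\alpha_n Y_n}$. Using \textbf{(C3)} together with the Fenchel identity (\ref{eq:fenchel}), on the support of $\Pi_n$ one has $LS_n-\Psi_n(S_n)=\Psi_n^\sharp(L)$, so $H'(LS_n-\Psi_n(S_n))=H'(\Psi_n^\sharp(L))$. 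Assumption \textbf{A2} ($\epsilon<H'<K$) then yields $\epsilon<Y_n<K$, and integrating \textbf{(C4)} against $\Pi_{n\mid\omega}$ gives $\tfrac1{\alpha_n}=E_{\Pi_n}[Y_n]\in(\epsilon,K)$. Thus the embedded density $\tilde y_n:=y_n(\tilde u_1,\dots,\tilde u_n)$ stays in the fixed compact $[\tfrac{\epsilon}{K},\tfrac{K}{\epsilon}]$, which will let me upgrade almost sure limits to $L^1$-limits by dominated convergence.

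The central step is to show that $Y_n(\tilde u)\to \tilde Y$ almost surely along subsequences under $\tilde P$. By Lemma \ref{lemma:tau} (Claim 2), $B_{\tau^n_n}\to B_1$ in $L^2$, hence almost surely along a subsequence, and $S_n(\tilde u)=B_{\tau^n_n}$. Conditionally on $\omega$, \textbf{(C3)} forces $L$ to lie in $\partial\Psi_n(S_n(\omega))=[\gamma_{\nu_n}(S_n^-),\gamma_{\nu_n}(S_n)]$. Since $\Psi_n$ is convex and converges uniformly to the $C^2$ function $\Psi_{\nu,\overline{\lambda}_{\infty}}$, Lemma \ref{prelim_lemma} applied to both endpoints of this interval shows that, whenever $S_n(\omega)\to B_1$, the interval $\partial\Psi_n(S_n(\omega))$ collapses to the single point $\Psi'_{\nu,\overline{\lambda}_{\infty}}(B_1)$. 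Consequently, uniformly over the conditional support of $L$, $H'(\Psi_n^\sharp(L))=H'\big(L\,S_n(\omega)-\Psi_n(S_n(\omega))\big)\to H'\big(\Psi'_{\nu,\overline{\lambda}_{\infty}}(B_1)B_1-\Psi_{\nu,\overline{\lambda}_{\infty}}(B_1)\big)=\tilde Y$, and averaging over the conditional law of $L$ yields $Y_n(\tilde u)\to\tilde Y$ almost surely.

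Finally I would conclude by a subsequence–normalization argument. Given any subsequence, extract a further one on which both $\alpha_n\to\alpha_\infty\in[\tfrac1K,\tfrac1\epsilon]$ and the convergence of the central step hold; then $\tilde y_n=\frac{1}{\alpha_n Y_n(\tilde u)}\to \frac{1}{\alpha_\infty\tilde Y}$ almost surely, hence in $L^1(\tilde P)$ by boundedness. Since each $\tilde y_n$ is a probability density, $E_{\tilde P}[\tilde y_n]=1$, so dominated convergence forces $E_{\tilde P}\big[\tfrac{1}{\alpha_\infty\tilde Y}\big]=1$, i.e.\ $\tfrac1{\alpha_\infty}=\tfrac{1}{E_{\tilde P}[1/\tilde Y]}=\beta$; the limit is therefore $\tilde y=\beta/\tilde Y$, independently of the chosen subsequence. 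As every subsequence admits a further subsequence converging in $L^1$ to the same $\tilde y$, the whole sequence converges: $\tilde y_n\to\tilde y$ in $L^1(\tilde P)$. The main obstacle is the central step: for a \emph{general} equilibrium the conditional law of $L$ given $\omega$ is not pinned down, so the convergence of $Y_n(\tilde u)$ cannot rest on any explicit conditional distribution and must instead be extracted from the collapse of the subgradient $\partial\Psi_n(S_n)$ to a point, which is precisely where the $C^1$-regularity of the limit $\Psi_{\nu,\overline{\lambda}_{\infty}}$ is indispensable.
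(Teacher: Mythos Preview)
Your proof is correct and reaches the same conclusion, but the route differs from the paper's in one substantive way. The paper explicitly \emph{constructs} a random variable $\tilde L_n$ on the Brownian space by injecting extra randomness $V_n:=B_{\tau_n^n+1}-B_{\tau_n^n}$ (independent $\mathcal N(0,1)$ under $\tilde P$) and setting $\tilde L_n:=F_{\tilde\omega_n}^{-1}(F_{\mathcal N(0,1)}(V_n))$, so that $(\tilde\omega_n,\tilde L_n)$ realizes $\Pi_n$ under $\tilde P_n$; it then proves $(\tilde L_n,\tilde S_n^n)\to(\Psi'_\nu(B_1),B_1)$ almost surely and deduces $\tilde Y_n\to\tilde Y$ in $L^1$ by splitting $\|\tilde Y_n-\tilde Y\|_{L^1}$ into a piece controlled by dominated convergence and a martingale piece $\|E_{\tilde P}[\tilde Y\mid\mathcal F_{\tau^n_n}]-\tilde Y\|_{L^1}$. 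You bypass this construction entirely: since \textbf{(C3)} confines the conditional support of $L$ given $\omega$ to $\partial\Psi_n(S_n(\omega))$, and since Lemma~\ref{prelim_lemma} forces that interval to collapse to $\Psi'_{\nu,\overline\lambda_\infty}(B_1)$ once $S_n(\tilde u)\to B_1$, the integrand $H'(LS_n-\Psi_n(S_n))$ becomes asymptotically constant uniformly over that support, and the conditional expectation $Y_n(\tilde\omega)$ converges almost surely to $\tilde Y$ with no need to realize $L$ on the Brownian space. Your approach is more economical (no auxiliary $V_n$, no martingale-continuity argument), while the paper's explicit realization of $\tilde L_n$ is reusable elsewhere and gives $L^1$-convergence of $\tilde Y_n$ directly without passing to subsequences. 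Both arguments hinge on exactly the ingredient you flag at the end: the $C^1$-regularity of the limit $\Psi_{\nu,\overline\lambda_\infty}$ from Proposition~\ref{prop:equiv}, which is what makes the subgradient collapse.
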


\begin{proof}
 
Our first task will be to define a variable $\tilde{L}_n$ on the space $(\tilde{\Omega},\mathcal{A},\tilde{P})$ such that the process $(\tilde{u}_1,\dots,\tilde{u}_n,\tilde{L}_n)$ is $\Pi_n$-distributed under $\tilde{P}_n$.

This can be done as follows: $\tilde{\omega}_n:=(\tilde{u}_1,\dots,\tilde{u}_n)$ is  $\mathcal{F}_{\tau_n^n}$ measurable. Let $V_n:=B_{\tau^n_{n}+1}-B_{\tau^n_{n}}$. Under $\tilde{P}$, $V_n \sim \mathcal{N}(0,1)$  and is independent of $\mathcal{F}_{\tau_n^n}$. Since $\tilde{y}_n=y_n(\tilde{\omega}_n)$, $V_n$ will have the same law $\mathcal{N}(0,1)$ and will still be independent of $\mathcal{F}_{\tau_n^n}$ under $\tilde{P}_n$. Let $F_{\omega_n}$ denote the cumulative distribution function of the conditional law of $L$ conditionally to $\omega_n$ under $\Pi_n$. We then set $\tilde{L}_n:=F_{\tilde{\omega}_n}^{-1}(F_{\mathcal{N}(0,1)}(V_n))$. $\tilde{L}_n$ has the same conditional law given $\tilde{\omega}_n$ as $L$ given $\omega_n$ under $\Pi_n$. Therefore $(\tilde{\omega}_n,\tilde{L}_n)$ under $\tilde{P}_n$ has the same law as $(\omega_n,L)$ under $\Pi_n$.\\

We now prove that, under $\tilde{P}$, $\tilde{L}_n$ converges to $\Psi'_{\nu}(B_1)$ almost surely. Indeed, since  $L_n$ belongs $\Pi_n$-almost surely to $\partial\Psi_n(S_n^n)$, we infer that $\tilde{L}_n$ belongs $\tilde{P}_n$-almost surely to $\partial\Psi_n(\tilde{S}_n^n)$. Since $\tilde{P}_n$ is equivalent to $\tilde{P}$, we conclude that $\tilde{L}_n \in \partial\Psi_n(\tilde{S}_n^n)$ $\tilde{P}$-almost surely.\\

Since $\Psi_n$ converges uniformly to $\Psi_{\nu} \in C^2$ and since $\tilde{S}_n^n$ converges  almost surely to $B_1$, we  apply Lemma \ref{prelim_lemma} to conclude that $\tilde{L}_n$ converges $\tilde{P}$-almost surely to $\Psi'_{\nu}(B_1)$. Therefore:

\begin{equation}\label{eq:cvg_couple}
(\tilde{L}_n,\tilde{S}_n^n) \buildrel{\tilde{P}-a.s.}\over\rightarrow (\Psi'_{\nu}(B_1),B_1)
\end{equation}

We define $Y_n:=E_{\Pi_n}[H'(LS_n^n-\Psi_n(S_n^n))|\omega_n]$. $Y_n$ is then a function $Y_n(\omega_n)$. It follows from Corollary \ref{Cor:C1234} that $\frac{\partial\lambda_n}{\partial\Pi_{n|\omega_n}}=\frac{Y_n}{E_{\Pi_n}[Y_n]}$.

We set $\tilde{Y}_n:=Y_n(\tilde{\omega}_n)$. We clearly have $\tilde{P}_n$-almost surely, and thus also $\tilde{P}$-almost surely, that $\tilde{Y}_n=E_{\tilde{P}_n}[H'(\tilde{L}_n\tilde{S}_n^n-\Psi_n(\tilde{S}_n^n)) | \tilde{\omega}_n]$. 

Since $\tilde{y}_n=\frac{\partial \tilde{P}_n}{\partial \tilde{P}}$ is just a function of $\tilde{\omega}_n$ , it follows that:
$$E_{\tilde{P}_n}[H'(\tilde{L}_n\tilde{S}_n^n-\Psi_n(\tilde{S}_n^n)) | \tilde{\omega}_n]= \frac{E_{\tilde{P}} [\tilde{y}_n H'(\tilde{L}_n\tilde{S}_n^n-\Psi_n(\tilde{S}_n^n)) |\tilde{\omega}_n]}{E_{\tilde{P}} [\tilde{y}_n |\tilde{\omega}_n]}=E_{\tilde{P}}[H'(\tilde{L}_n\tilde{S}_n^n-\Psi_n(\tilde{S}_n^n)) | \tilde{\omega}_n]$$

Note next that $\tilde{S}_n^n$ is $\tilde{\omega}_n$-measurable and $\tilde{L}_n$ is a function of the pair $(\tilde{\omega}_n,V_n)$. Since $V_n$ is independent of $\mathcal{F}_{\tau_n^n}$ we get then: $$\tilde{Y}_n= E_{\tilde{P}}[H'(\tilde{L}_n\tilde{S}_n^n-\Psi_n(\tilde{S}_n^n)) | \tilde{\omega}_n]=E_{\tilde{P}}[H'(\tilde{L}_n\tilde{S}_n^n-\Psi_n(\tilde{S}_n^n)) | \mathcal{F}_{\tau_n^n}]$$ 

We claim that $\tilde{Y}_n$ converge in $L^1$ to $\tilde{Y}:=H'(\Psi'_{\nu}(B_1)B_1-\Psi_{\nu}(B_1))$. Indeed:
\begin{align*}
\| \tilde{Y_n} - \tilde{Y} \|_{L^1}=& \| E_{\tilde{P}}[H'(\tilde{L}_n\tilde{S}_n^n-\Psi_n(\tilde{S}_n^n))| \mathcal{F}_{\tau^n_n}]-\tilde{Y} \|_{L^1} \\ \leq& \| E_{\tilde{P}}[H'(\tilde{L}_n\tilde{S}_n^n-\Psi_n(\tilde{S}_n^n))-\tilde{Y}| \mathcal{F}_{\tau^n_n}] \|_{L^1}+ \| E_{\tilde{P}}[\tilde{Y}| \mathcal{F}_{\tau^n_n}]-\tilde{Y} \|_{L^1}
\end{align*}

We next claim that  $H'(\tilde{L}_n\tilde{S}_n^n-\Psi_n(\tilde{S}_n^n))$ converges $\tilde{P}$-almost surely to $\tilde{Y}$. Indeed, according to equation (\ref{eq:cvg_couple}), $(\tilde{L}_n, \tilde{S}_n^n)$ converges almost surely to $(\Psi'_{\nu}(B_1),B_1)$. Furthermore $\Psi_n$ converges uniformly to $\Psi_{\nu}$. Therefore $\Psi_n(\tilde{S}_n^n)$ converges $\tilde{P}_n$- almost surely to $\Psi_{\nu}(B_1)$. Since $H'$ is continuous, we conclude that $H'(\tilde{L}_n\tilde{S}_n^n-\Psi_n(\tilde{S}_n^n))$ converges $\tilde{P}_n$-almost surely to $\tilde{Y}$.
Since $H'$ is further bounded, this later convergence holds also in $L^1$ and thus:
\[ \|E_{\tilde{P}}[H'(\tilde{L}_n\tilde{S}_n^n-\Psi_n(\tilde{S}_n^n))-\tilde{Y} | \mathcal{F}_{\tau_n^n}] \|_{L^1} \leq \|H'(\tilde{L}_n\tilde{S}_n^n-\Psi_n(\tilde{S}_n^n))-\tilde{Y} \|_{L^1} \rightarrow 0\]

We next claim that  $E_{\tilde{P}}[\tilde{Y}| \mathcal{F}_{\tau^n_n}]$ converges to $\tilde{Y}$ in $L^1$. On the contrary one would have a subsequence $n(k)$ such that $E_{\tilde{P}}[\tilde{Y}| \mathcal{F}_{\tau^{n(k)}_{n(k)}}]$ does not admit $\tilde{Y}$ as accumulation point in $L^1$.

Since $\mathcal{F}_t$ is the natural filtration of a Brownian motion, it results from theorem V.3.5 in \citet{revuz_yor} that the martingale $r_t:= E[\tilde{Y}| \mathcal{F}_t]$ is continuous and uniformly integrable. Therefore, due to the optional stopping theorem, we have $E_{\tilde{P}}[\tilde{Y}| \mathcal{F}_{\tau^{n(k)}_{n(k)}}]=  r_{\tau_{n(k)}^{n(k)}}$.

Since $\tau_n^n$ converges in $L^2$ to $1$, there is no loss of generality to assume, possibly after selection of a smaller subsequence, that $n(k)$ further satisfies that $\tau_{n(k)}^{n(k)}$ converges $\tilde{P}$-almost surely to $1$. But then $r_{\tau_{n(k)}^{n(k)}}$ converges almost surely to $r_1=E_{\tilde{P}}[\tilde{Y}| \mathcal{F}_1]=\tilde{Y}$. But due to the  uniform integrability of the martingale $r_t$, this convergence also holds in $L^1$, in contradiction with the definition of the subsequence $n(k)$. Therefore, as announced, $E_{\tilde{P}}[\tilde{Y}| \mathcal{F}_{\tau^n_n}]$ converges to $\tilde{Y}$ in $L^1$.\\

According to Corollary \ref{Cor:C1234}, $\frac{\partial \lambda_n}{\partial \Pi_{n|\omega}}= \alpha_n . Y_n$, and thus $\frac{\partial \Pi_{n|\omega_n}}{\partial\lambda_n}= \frac{\beta_n}{Y_n}$ for a constant $\beta_n$. Therefore for all $\omega_n$, $y_n(\omega_n)=\frac{\beta_n}{Y_n(\omega_n)}$ and $\tilde{y}_n=\frac{\beta_n}{\tilde{Y}_n}$. Since $\tilde{Y}_n$ is a probability density under $\tilde{P}$ we get $\beta_n=\frac{1}{E_{\tilde{P}}[\frac{1}{\tilde{Y}_n}]}$. 

Since $0<\epsilon<\tilde{Y}<K$ (assumptions \textbf{A2} on $H$), $\frac{1}{\tilde{Y}_n}$ converges in $L^1$ to $\frac{1}{\tilde{Y}}$ and it results as announced that  $\tilde{y}_n$ converges in $L^1$ to $\tilde{y}=\frac{\beta}{\tilde{Y}}$ where $\beta=\frac{1}{E_{\tilde{P}}[\frac{1}{\tilde{Y}}]}$.
\end{proof}

Theorem \ref{thm:unique_equiv_measure} claims that the martingale equivalent distribution $Q_n$ converges to a limit distribution $Q$. The next theorem is the counterpart of this result for the historical distribution. It claims that $P_n$ converges to a limit distribution $P$ which is the law of the process $Z$ when $\tilde{\Omega}$ is endowed with the probability measure $\tilde{y}\tilde{P}$. Therefore the limit distributions $P$ and $Q$ are equivalent.

This result is the main result of this paper. It claims that the asymptotics of the historical price process is a CMMV under an appropriate martingale equivalent measure $Q$.

\begin{theorem}\label{thm:histo}
The price process $Z^n_t$ under the probability $\Pi_n$ converges in finite dimensional distribution to the process $Z$ when $\tilde{\Omega}$ is endowed with the probability $\tilde{y}.\tilde{P}$ where $\tilde{y}=\frac{1}{E_{\tilde{P}}[\frac{1}{\tilde{Y}}]\tilde{Y}}>0$.
\end{theorem}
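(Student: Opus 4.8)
The plan is to realize the historical law of the price process as the law of the embedded process $\tilde{Z}^n$ constructed in the proof of Theorem \ref{thm:equiv}, but now under the tilted measure $\tilde{P}_n=\tilde{y}_n.\tilde{P}$, and then to pass to the limit using the two convergences already established: the $L^2$-convergence $\sup_t\|\tilde{Z}^n_t-Z_t\|_{L^2}\to 0$ from Theorem \ref{thm:equiv}, and the $L^1(\tilde{P})$-convergence $\tilde{y}_n\to\tilde{y}$ from Lemma \ref{lemma:y_n_to_y}. First I would observe that, by the embedding construction, each $\tilde{z}^n_q$ is obtained from $\tilde{\omega}_n=(\tilde{u}_1,\dots,\tilde{u}_n)$ through the very same function of $(\tilde{u}_1,\dots,\tilde{u}_{q-1})$ that produces $p^n_q$ from $(u_1,\dots,u_{q-1})$; hence the whole process $\tilde{Z}^n$ is a measurable function of $\tilde{\omega}_n$. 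Since under $\tilde{P}_n$ the vector $\tilde{\omega}_n$ is $\Pi_{n|\omega_n}$-distributed, the law of $\tilde{Z}^n$ under $\tilde{P}_n$ coincides with the historical law of $Z^n$.

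Fixing a finite set of times $J$ and a bounded continuous test function $\phi$, proving finite-dimensional convergence then amounts to showing
\[
E_{\tilde{P}}\bigl[\tilde{y}_n\,\phi((\tilde{Z}^n_t)_{t\in J})\bigr]\longrightarrow E_{\tilde{P}}\bigl[\tilde{y}\,\phi((Z_t)_{t\in J})\bigr].
\]
Writing $\Phi_n:=\phi((\tilde{Z}^n_t)_{t\in J})$ and $\Phi:=\phi((Z_t)_{t\in J})$, I would split the difference as
\[
E_{\tilde{P}}[\tilde{y}_n(\Phi_n-\Phi)]+E_{\tilde{P}}[(\tilde{y}_n-\tilde{y})\Phi].
\]
For the second term, $\Phi$ is bounded by $\|\phi\|_\infty$ and $\tilde{y}_n\to\tilde{y}$ in $L^1(\tilde{P})$, so it vanishes. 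For the first term, the key point is that the densities $\tilde{y}_n$ are uniformly bounded: since $\tilde{y}_n=\beta_n/\tilde{Y}_n$ with $\epsilon<\tilde{Y}_n<K$ by assumption \textbf{A2}, and correspondingly $\epsilon<\beta_n<K$, one has $\tilde{y}_n\leq K/\epsilon$. Combined with $\Phi_n\to\Phi$ in $L^1$, which follows from $\sup_t\|\tilde{Z}^n_t-Z_t\|_{L^2}\to 0$ (Theorem \ref{thm:equiv}), the continuity of $\phi$ (giving convergence in probability), and the boundedness of $\phi$ (dominated convergence), the first term also vanishes.

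The main obstacle is the identification in the first step: one must verify carefully that $\tilde{Z}^n$ is genuinely a function of $\tilde{\omega}_n$, so that tilting by $\tilde{y}_n$ (itself a function of $\tilde{\omega}_n$) reproduces exactly the historical distribution, and that the conditional structure defining $\tilde{z}^n_q=E[\tilde{z}_n\mid\mathcal{F}_{\tau^n_{q-1}}]$ is consistent with the functional form of $p^n_q$. Once this identification is secured, the remaining estimates are routine, relying only on the uniform bound on $\tilde{y}_n$ together with the two convergence statements already proved, which yields the announced finite-dimensional convergence and, since $\tilde{y}>0$, the equivalence of the limit historical law $P$ with the CMMV law $Q$.
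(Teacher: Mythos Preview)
Your proposal is correct and follows the same strategy as the paper: identify the historical law of $Z^n$ with the law of $\tilde{Z}^n$ under $\tilde{y}_n.\tilde{P}$, then combine the $L^2$-convergence of $\tilde{Z}^n$ from Theorem~\ref{thm:equiv} with the $L^1$-convergence $\tilde{y}_n\to\tilde{y}$ from Lemma~\ref{lemma:y_n_to_y} and the uniform bound $\tilde{y}_n\le K/\epsilon$. The only difference is cosmetic: the paper argues by contradiction, extracting an almost-surely convergent subsequence and applying dominated convergence to the product $\tilde{y}_n\phi(\tilde{Z}^n_J)$ directly, whereas you split the difference into two terms and bound each separately; both routes rest on exactly the same three ingredients, and the identification you flag as the main obstacle is immediate from $\tilde{z}^n_q=p^n_q(\tilde{u}_1,\dots,\tilde{u}_{q-1})$, which the paper also takes for granted.
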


\begin{proof}
Let $J$ a finite family of times. Let $\phi$ be a continuous and bounded function: $\mathbb{R}^{|J|} \rightarrow \mathbb{R}$. 
It is convenient to introduce the notations  $\tilde{Z}^n_J:=(\tilde{Z_t^n})_{t \in J}$ and $Z^n_J:=(Z_t^n)_{t \in J}$. Then observe that $E_{\Pi_n}[\phi(Z_J^n)]= E_{\tilde{P}_n}[\phi(\tilde{Z_J^n})]=E_{\tilde{P}}[\tilde{y}_n\phi(\tilde{Z_J^n})]$. We next claim that $E_{\tilde{P}}[\tilde{y}_n\phi(\tilde{Z_J^n})]$  converges to $E_{\tilde{P}}[\tilde{y}\phi({Z_J})]$. Indeed, on the contrary there would exist a subsequence $n(k)$ that $E_{\tilde{P}}[\tilde{y}_{n(k)}\phi(\tilde{Z}_J^{n(k)})]$ does not admit $E_{\tilde{P}}[\tilde{y}\phi({Z_J})]$ as accumulation point. However, as it results from equation (\ref{eq:2conditions}) and Lemma \ref{lemma:y_n_to_y}, we have that $\tilde{Z}^{n(k)}_t \rightarrow Z_t$ in $L^2$ for all $t$ and that $\tilde{y}_{n(k)} \rightarrow \tilde{y}$ in $L^1$. Possibly after selection of a smaller subsequence, we may assume without loss of generality that the sequence $n(k)$ is further such that $\tilde{Z}^{n(k)}_J \rightarrow Z_J$ and that $\tilde{y}_{n(k)} \rightarrow \tilde{y}$ almost surely. Due to the continuity of $\phi$, we get that $\tilde{y}_{n(k)}\phi(\tilde{Z}_J^{n(k)})$ converges almost surely to $\tilde{y}\phi({Z_J})$. Since both $\phi$ and $\tilde{y}_n$ are bounded, we have with Lebesgue dominated convergence theorem that 
$E_{\tilde{P}}[\tilde{y}_{n(k)}\phi(\tilde{Z_J^{n(k)}})]$ converges to $E_{\tilde{P}}[\tilde{y}\phi({Z_J})]$, which is in contradiction with the definition of $n(k)$. Therefore, as announced, $E_{\Pi_n}[\phi(Z_J^n)] \rightarrow E_{\tilde{P}}[\tilde{y}\phi({Z_J})]$ for all $J$: the law of $Z_J^n$ converges weakly in $\Delta(\mathbb{R}^{|J|})$ to the law of $Z_J$ under $\tilde{y}.\tilde{P}$ and the theorem is proved.
\end{proof}

\section{Conclusion}

To conclude this paper we would like to make some remarks on the obtained results.\\

The first one is about the dual game. Our first attempt to analyze this game was using duality techniques. The dual game $G_n^{\star}(\phi)$ is in fact the reduced game where Player 1 is allowed to select privately the value of $L$ but his payoff is decreased by a penalty $\phi(L)$. The function $\phi$ is known by both players. Strategies and payoffs are the same for Player 2. A strategy $\Pi$ for Player 1 is a joint probability on $(\omega,L)$ but there is no constraint on the marginal $\Pi_{\mid L}$. It can be easily proved that if $(\Pi^{\star},\overline{p})$ is an equilibrium in $G_n^{\star}(\phi)$ and if $\mu = \Pi^{\star}_{\mid L}$ then $(\Pi^{\star},\overline{p})$ is an equilibrium in $G_n(\mu)$. It can then be proved that there exists a function $\phi_n$ and an equilibrium $(\Pi_n^{\star},\overline{p}_n)$ in $G_n^{\star}(\phi_n)$ such that $ \Pi^{\star}_{\mid L}=\mu$. Therefore $(\Pi_n^{\star},\overline{p}_n)$ is a sequence of equilibria in $G_n(\mu)$. One of the reason for introducing the dual game was that the asymptotics of the reduced equilibria in
$G_n^{\star}(\phi)$ was quiet easy to analyze (with $\phi$ independent of $n$). However, to analyze the asymptotics of the equilibria in $G_n(\mu)$ using the dual game, we would have to analyze a sequence of equilibria in $G_n^{\star}(\phi_n)$ for an appropriate sequence of $\phi_n$. This makes the analysis more involved and explains why we decided to limit our paper to the game $G_n(\mu)$.\\

The second remark is about the generality of our results. The results obtained in \citep{de2010} were somehow more general than those obtained in the present paper: in the risk-neutral case, if the mechanism belongs to the class of natural mechanisms, then the price process at equilibrium converge to a CMMV for all sequences of equilibria in $G_n(\mu)$. The current paper is only concerned with one particular natural mechanism for which the price process is explicit. For this mechanism we do not analyze the asymptotic of any sequence of equilibria, but only of sequences of reduced equilibria: we prove that the price processes at a reduced equilibrium converges to a CMMV under the risk-neutral probability. This naturally raises two questions: do we have the same asymptotic for any sequence of equilibria in our game? And will this dynamic appear for more general price based mechanism? We conjecture a positive answer to both questions but are presently unable to prove it.\\

Finally, we just want to mention an alternative approach to our results. It would indeed be possible to introduce continuous time games quite similar to the Brownian games introduced in \cite{de99} : a strategy $\Pi_n$ in the reduced game can be viewed as a pair $(y_n,\rho_n)$ where $\rho_n$ is a conditional law of $L$ given $\omega$ and $y_n$ is the density $\frac{\partial \Pi_{n | \omega}}{\partial \lambda_n}$. Player 1's payoff is given by $E_{\lambda_n}[y_n (L S_n - \sum_{q=1}^n p_q(S_{q}-S_{q-1})]$. Heuristically, under $\lambda_n$, $S_n$ converges to $B_1$ and the payoff function of Player 1 should converge to $E[ y(L B_1  - \int_0^1 p_t dB_t)]$. Similarly player 2 payoff would be $E[ y H( L B_1  - \int_0^1 p_t dB_t)]$.

\section{Annexes}

\subsection{Annexes for section \ref{section_conditions}}\label{annexe_6}

\begin{lemma}\label{lemma:p_q_is_increasing}
Let $(\Pi_n^{\star},\Psi_n^{\star})$ be an equilibrium and $\overline{p}$ the corresponding pure strategy of player 2.
Then we have:
\[\overline{p}^n_q(u_1,\dots,u_{q-2},1)>\overline{p}^n_q(u_1,\dots,u_{q-2},-1) \]

\end{lemma}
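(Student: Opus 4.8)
The plan is to start from the explicit price formula (\ref{equation_p_q}), namely $\overline{p}^n_q(u_1,\dots,u_{q-1})=\sqrt{n}\,E_{\lambda_n}[u_q X^\star\mid u_1,\dots,u_{q-1}]$, and substitute $X^\star=\Psi_n(S_n)$, where by Lemma \ref{lemma:nu} the function $\Psi_n=\Psi_{\nu_n,\overline{\lambda}_n}$ is convex with derivative $\Psi_n'=\gamma_{\nu_n}=F_\mu^{-1}\circ F_{\nu_n}$, and $\nu_n$ is the law of $S_n$ at the equilibrium $(\Pi_n^\star,\Psi_n^\star)$. Under $\lambda_n$, conditioning on $(u_1,\dots,u_{q-1})$ leaves $u_q,\dots,u_n$ i.i.d.\ uniform on $\{-1,+1\}$; isolating $u_q$ and writing $W:=\tfrac{1}{\sqrt{n}}\sum_{i=q+1}^{n}u_i$ (independent of $u_q$) and $s_{q-1}:=\tfrac{1}{\sqrt{n}}\sum_{i=1}^{q-1}u_i$, the same computation that leads to (\ref{equation:price_S_n}) gives
\[
\overline{p}^n_q=\frac{\sqrt{n}}{2}\,E_W\!\left[\Psi_n\!\big(s_{q-1}+\tfrac{1}{\sqrt{n}}+W\big)-\Psi_n\!\big(s_{q-1}-\tfrac{1}{\sqrt{n}}+W\big)\right].
\]

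Next I would insert the two values of the last conditioning coordinate. Writing $\sigma:=\tfrac{1}{\sqrt{n}}\sum_{i=1}^{q-2}u_i$, the history $(u_1,\dots,u_{q-2},1)$ yields $s_{q-1}=\sigma+\tfrac{1}{\sqrt{n}}$ and the history $(u_1,\dots,u_{q-2},-1)$ yields $s_{q-1}=\sigma-\tfrac{1}{\sqrt{n}}$. Subtracting the two resulting expressions, and setting $x:=\sigma+W$ and $h:=\tfrac{2}{\sqrt{n}}$, I obtain
\[
\overline{p}^n_q(u_1,\dots,u_{q-2},1)-\overline{p}^n_q(u_1,\dots,u_{q-2},-1)=\frac{\sqrt{n}}{2}\,E_W\!\left[\Psi_n(x+h)-2\Psi_n(x)+\Psi_n(x-h)\right].
\]
Since $\Psi_n$ is convex, every centered second difference inside the expectation is nonnegative, which already gives the weak inequality $\overline{p}^n_q(\dots,1)\ge\overline{p}^n_q(\dots,-1)$. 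The real content of the lemma is to upgrade this to a strict inequality, and that is the main obstacle.

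To obtain strictness I would argue that $x=\sigma+W$ is always an \emph{interior} value of the lattice of possible values of $S_n$: it is $\tfrac{1}{\sqrt{n}}$ times a sum of $n-2$ signs, hence equals $\tfrac{2k-n}{\sqrt{n}}$ with $k\in\{1,\dots,n-1\}$. Because the equilibrium strategy $\Pi_n^\star$ is completely mixed (Proposition \ref{prop:allhistory}), every attainable value of $S_n$ carries positive $\nu_n$-mass, so $\nu_n(\{x\})>0$; and since $x$ is interior there is $\nu_n$-mass both strictly to its left and strictly to its right, whence $0<F_{\nu_n}(x^-)<F_{\nu_n}(x)<1$. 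As $\Psi_n'=\gamma_{\nu_n}=F_\mu^{-1}\circ F_{\nu_n}$ and $F_\mu^{-1}$ is strictly increasing on $(0,1)$ thanks to the strictly positive density in \textbf{A1}, it follows that $\gamma_{\nu_n}(x^-)<\gamma_{\nu_n}(x)$, i.e.\ $\Psi_n$ has a genuine kink at $x$. Consequently $\Psi_n$ is not affine on $[x-h,x+h]$, and for a convex function this is exactly the condition forcing the centered second difference to be strictly positive; indeed, monotonicity of $\Psi_n'$ gives $\Psi_n(x+h)-2\Psi_n(x)+\Psi_n(x-h)\ge h\big(\gamma_{\nu_n}(x)-\gamma_{\nu_n}(x^-)\big)>0$. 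Since this holds for every value of $W$, the expectation is strictly positive, yielding $\overline{p}^n_q(u_1,\dots,u_{q-2},1)>\overline{p}^n_q(u_1,\dots,u_{q-2},-1)$ as claimed. The delicate point to get right is precisely the chain \emph{complete mixedness} $\Rightarrow$ $\nu_n(\{x\})>0$ at an interior lattice point $\Rightarrow$ strict jump of $\gamma_{\nu_n}$ via $f_\mu>0$; the rest is routine.
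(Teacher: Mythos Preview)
Your proof is correct and follows essentially the same route as the paper's: both reduce the claim to the strict positivity of the centered second difference $\Psi_n(x+\tfrac{2}{\sqrt{n}})-2\Psi_n(x)+\Psi_n(x-\tfrac{2}{\sqrt{n}})$ at every interior lattice value $x$ of $S_n$, and both obtain this strictness from the fact that $\Psi_n$ has a genuine kink at each such $x$. The only cosmetic difference is in how the kink is established: the paper argues by contradiction (if $\Psi_n$ were differentiable at a lattice point $z$, condition \textbf{(C3)} would force $\mu$ to have an atom at $\Psi_n'(z)$, contradicting \textbf{A1}), whereas you argue directly from the explicit identification $\Psi_n'=\gamma_{\nu_n}=F_\mu^{-1}\circ F_{\nu_n}$, using that $\nu_n(\{x\})>0$ by complete mixedness and that $F_\mu^{-1}$ is strictly increasing on $(0,1)$ by \textbf{A1}. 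These are two sides of the same coin, and your version is arguably more transparent.
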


\begin{proof}
Since $\Psi_n$ is convex, its derivative exists except on a countable number of points. We may therefore write:
\begin{equation}\label{def:chi}
\chi(x):= \frac{\sqrt{n}}{2} \left(\Psi_n(x + \frac{1}{\sqrt{n}})-\Psi_n(x - \frac{1}{\sqrt{n}})\right) 
=\frac{\sqrt{n}}{2}\int_{\frac{-1}{\sqrt{n}}}^{\frac{1}{\sqrt{n}}} \Psi_n'(x+v)dv
\end{equation}

Denote $U=\{\frac{-n+1+2k}{\sqrt{n}} | k \in 0,\dots,n-1\}$ the set of possible values of $S_{n-1}^n$, where $S_q^n:=\frac{1}{\sqrt{n}}\sum_{k=1}^q u_q$.

 We first prove that $\chi$ is strictly increasing on $U$. Let $x,y$ be two successive points in $U$ (i.e. $y=x+\frac{2}{\sqrt{n}}$) and assume that $\chi(x)=\chi(y)$. We have then:
$$0=\chi(y)-\chi(x)=\frac{\sqrt{n}}{2}\int_{\frac{-1}{\sqrt{n}}}^{\frac{1}{\sqrt{n}}} \Psi_n'(y+v)-\Psi_n'(x+v)dv$$
Since $\Psi'$ is increasing, this is the integral of a positive function which is $0$. This is only possible if $ \Psi_n'(y+v)-\Psi_n'(x+v)=0$ for almost every $v$, which means in particular that $\Psi_n'(z)=\Psi_n'(z+\frac{2}{\sqrt{n}})$ for every $z \in ]x-\frac{1}{\sqrt{n}},x+\frac{1}{\sqrt{n}}[$. $\Psi_n'$ is therefore
constant on the interval $]x-\frac{1}{\sqrt{n}},y+\frac{1}{\sqrt{n}}[$. This implies that $\Psi$ is differentiable at the point $z=x+\frac{1}{\sqrt{n}}$ which is a possible value for $S_n=S_{n-1}^n+\frac{u_n}{\sqrt{n}}$. The event $\{ S_n = z \}$ has a strictly positive probability under $\lambda_n$ and we infer also that $\Pi_{n}(S_n = z)>0$ since $\lambda_n$ and $\Pi_{n \mid \omega}$ are equivalent probabilities as it follows from condition \textbf{(C4)} and the fact that $H'$ is strictly positive.

Since $\partial\Psi_n(z)=\{ \Psi_n'(z) \}$, we infer from  \textbf{(C3)} that $\Pi_n(L = \Psi'(z)) >0$. According to \textbf{(C2)}, $L$ is $\mu$-distributed under $\Pi_n$ and we would therefore have $\mu(\{ \Psi_n'(z) \})>0$, which contradict our hypothesis \textbf{A1} that $\mu$ has a density with respect to Lebesgue measure. This concludes the proof that $\chi$ is increasing on $U$.

According to formula (\ref{eq:price}), we get:

\begin{align*}
\overline{p}^n_q(u_1,\dots,u_{q-1}) =&  E_{\lambda_n} \left[ \frac{\sqrt{n}}{2} \left(\Psi_n(S_{n-1}^n + \frac{1}{\sqrt{n}})-\Psi_n(S_{n-1}^n - \frac{1}{\sqrt{n}})\right)  \mid u_1, \dots, u_{q-1} \right] \\
=&E_{\lambda_n}[\chi(S_{n-1}^n)|u_1,\dots,u_{q-1}]\\
=&E_{\lambda_n}[\chi(S_{n-1}^n)|S_{q-1}^n]\\
=&E_{\lambda_n}[\chi(S_{q-1}^n+V)|S_{q-1}^n]\\
\end{align*} 
where $V=\frac{u_q + \dots + u_{n-1}}{\sqrt{n}}$. Since $V$ is independent of $S_{q-1}$, we get therefore $\overline{p}^n_q(u_1,\dots,u_{q-1})=r(S_{q-1})$ where $r(x):=E_{\lambda_n}[\chi(x+V)]$ is a strictly increasing function on the set of possible values of $S^n_{q-1}$.
\end{proof}

\subsection{The continuity of $T_{\lambda}$}\label{continuity_annexe}

The aim of this subsection is to prove Proposition \ref{cor:continuity}, which is usefull to prove the existence of an equilibrium in section  \ref{section_existence}.\\

We will prove the continuity of the operator step by step. The following lemma are useful in the proof of the continuity.

\begin{lemma}\label{lemma:Continuity_Phi_and_Gamma}
The mappings $\nu \rightarrow \Phi_{\nu}$ and $\nu \rightarrow \Gamma_{\nu}$ are continuous from $(P_2(\mathbb{R}),W_2)$ to the set of convex functions on respectively $]0,1[$ and $\mathbb{R}$ with the topology of uniform convergence.
\end{lemma}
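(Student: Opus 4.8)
The plan is to reduce both statements to the classical identity $W_2(\nu_k,\nu)^2=\int_0^1|F_{\nu_k}^{-1}(u)-F_{\nu}^{-1}(u)|^2\,du$ for probability measures on $\mathbb{R}$ (see \citet{villani} or \citet{mallows}), which turns $W_2$-convergence into $L^2([0,1])$-convergence of the quantile functions. The second ingredient I would use repeatedly is that, by \textbf{A1}, $f_{\mu}$ is continuous and strictly positive on the compact $[0,1]$, hence bounded below by some $\delta>0$; integrating the density then gives the elementary estimates $F_{\mu}^{-1}(y)\le y/\delta$ and $1-F_{\mu}^{-1}(y)\le(1-y)/\delta$ for $y\in[0,1]$, which control $\gamma_{\nu}=F_{\mu}^{-1}\circ F_{\nu}$ near its limiting values $0$ and $1$.

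For $\Phi_{\nu}$ the argument is short. Since $\phi_{\nu}=F_{\nu}^{-1}\circ F_{\mu}$ is increasing, $\Phi_{\nu}$ is convex, and for every $\ell\in\,]0,1[$ one has $|\Phi_{\nu_k}(\ell)-\Phi_{\nu}(\ell)|\le\int_0^1|\phi_{\nu_k}(t)-\phi_{\nu}(t)|\,dt$, a bound independent of $\ell$. I would then substitute $u=F_{\mu}(t)$, so that $dt=du/f_{\mu}(F_{\mu}^{-1}(u))\le du/\delta$, obtaining $\int_0^1|\phi_{\nu_k}-\phi_{\nu}|\,dt\le\frac1\delta\int_0^1|F_{\nu_k}^{-1}(u)-F_{\nu}^{-1}(u)|\,du$. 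A Cauchy--Schwarz step bounds the right-hand side by $\frac1\delta W_2(\nu_k,\nu)$, which tends to $0$; hence $\Phi_{\nu_k}\to\Phi_{\nu}$ uniformly on $]0,1[$.

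The case of $\Gamma_{\nu}$ is the real obstacle, because $\Gamma_{\nu}$ lives on all of $\mathbb{R}$ and uniform control can no longer be extracted from a bounded interval of integration. Here $0\le\gamma_{\nu}\le1$, so $\Gamma_{\nu}$ is convex and $1$-Lipschitz, and $|\Gamma_{\nu_k}(s)-\Gamma_{\nu}(s)|\le\|\gamma_{\nu_k}-\gamma_{\nu}\|_{L^1(\mathbb{R})}$ uniformly in $s$; it therefore suffices to prove $\|\gamma_{\nu_k}-\gamma_{\nu}\|_{L^1(\mathbb{R})}\to0$. I would establish this by splitting $\mathbb{R}$ into a compact core and two tails. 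On any $[-M,M]$, $W_2$-convergence gives weak convergence, hence $F_{\nu_k}\to F_{\nu}$ at every continuity point, and composing with the continuous $F_{\mu}^{-1}$ yields $\gamma_{\nu_k}\to\gamma_{\nu}$ a.e.; since $0\le\gamma\le1$, dominated convergence gives $\int_{-M}^{M}|\gamma_{\nu_k}-\gamma_{\nu}|\,dt\to0$. For the tails I would invoke the estimates of the first paragraph: for $t>M>0$, $|\gamma_{\nu_k}(t)-\gamma_{\nu}(t)|\le\frac1\delta\big[(1-F_{\nu_k}(t))+(1-F_{\nu}(t))\big]$, and $\int_M^{\infty}(1-F_{\nu_k}(t))\,dt=E_{\nu_k}[(S-M)^+]\le\frac1M\sup_k E_{\nu_k}[S^2]$, the supremum being finite because $W_2$-convergence forces the second moments to converge. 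The negative tail is handled symmetrically through $\gamma_{\nu}(t)\le F_{\nu}(t)/\delta$.

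These tail bounds are uniform in $k$ and vanish as $M\to\infty$, so choosing $M$ large first and then $k$ large makes $\|\gamma_{\nu_k}-\gamma_{\nu}\|_{L^1(\mathbb{R})}$ arbitrarily small, which closes the argument for $\Gamma_{\nu}$. The one point demanding genuine care — and the crux of the whole lemma — is precisely this uniform integrability of the tails of $\gamma_{\nu_k}-\gamma_{\nu}$: it is exactly where the uniform second-moment bound supplied by $W_2$-convergence, rather than mere weak convergence, is indispensable.
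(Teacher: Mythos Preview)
Your argument is correct. For $\Phi_{\nu}$ you do essentially what the paper does: bound the difference of the primitives by the $L^1$-distance of the quantile functions, insert the density $f_{\mu}$, and use Cauchy--Schwarz together with the explicit formula $W_2(\nu_1,\nu_2)^2=\int_0^1|F_{\nu_1}^{-1}-F_{\nu_2}^{-1}|^2\,du$. Both routes end with a Lipschitz estimate $\|\Phi_{\nu_1}-\Phi_{\nu_2}\|_\infty\le C\,W_2(\nu_1,\nu_2)$.

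For $\Gamma_{\nu}$, however, your proof is genuinely different from the paper's. The paper observes that $\Gamma_{\nu}=\Phi_{\nu}^{\sharp}-\Phi_{\nu}^{\sharp}(0)$ (the subgradients match, and both vanish at $0$) and then invokes the elementary fact that the Fenchel transform is an isometry for the sup-norm: $\|f^{\sharp}-g^{\sharp}\|_\infty=\|f-g\|_\infty$. This immediately transfers the Lipschitz bound from $\Phi_{\nu}$ to $\Gamma_{\nu}$ in one line, with the same constant. Your approach instead controls $\|\gamma_{\nu_k}-\gamma_{\nu}\|_{L^1(\mathbb{R})}$ directly by a core/tail decomposition, using dominated convergence on compacts and the uniform second-moment bound from $W_2$-convergence on the tails. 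Your method is self-contained and makes explicit why the $W_2$ topology (rather than mere weak convergence) is needed, but it is longer and yields only continuity, not the Lipschitz property. The paper's Fenchel-duality trick is shorter and sharper, and worth knowing: whenever two convex functions are Legendre duals of one another, uniform closeness of one pair automatically gives uniform closeness of the other.
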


\begin{proof}
Let $\nu_1,\nu_2$ be two measures in $P_2(\mathbb{R})$ and $x \in ]0,1[$.

\begin{align*}
| \Phi_{\nu_1}(x)-\Phi_{\nu_2}(x) |  &\leq  \left| \int_{0}^x  F_{\nu_1}^{-1} (F_{\mu} (\ell)) -F_{\nu_2}^{-1} (F_{\mu} (\ell)) d\ell \right| \\
&\leq \int_{0}^x \left| F_{\nu_1}^{-1} (F_{\mu} (\ell)) -F_{\nu_2}^{-1} (F_{\mu} (\ell))\right| d\ell\\
&\leq \int_0^1 \left| F_{\nu_1}^{-1} (F_{\mu} (\ell)) -F_{\nu_2}^{-1} (F_{\mu} (\ell))\right| \frac{f_{\mu}(\ell)}{f_{\mu}(\ell)}d\ell\\
&\leq E_{\mu} [  \frac{ \left| F_{\nu_1}^{-1} (F_{\mu} (L)) -F_{\nu_2}^{-1} (F_{\mu} (L))\right|}{f_{\mu}(L)}]\\
&\leq \sqrt{ E_{\mu} [   ( F_{\nu_1}^{-1} (F_{\mu} (L)) -F_{\nu_2}^{-1} (F_{\mu} (L))) ^2 }] \sqrt{E_{\mu}[\frac{1}{ f_{\mu}(L)^2}] }
\end{align*}
The last inequality follows from Cauchy Scwharz theorem. The right hand side does not depend on $x \in ]0,1[$, therefore:

\begin{equation}\label{eq:unifpsi}
\| \Phi_{\nu_1}-\Phi_{\nu_2} \|_{\infty}  \leq \sqrt{ E_{\mu} [   ( F_{\nu_1}^{-1} (F_{\mu} (L)) -F_{\nu_2}^{-1} (F_{\mu} (L))) ^2 }] \sqrt{E_{\mu}[\frac{1}{ f_{\mu}(L)^2}] }
\end{equation}



It results for work of \citep{dall} and \citep{frechet1957distance} that $W_2(\nu_1,\nu_2)$ may be explicitly computed in the case of one dimensional probability distributions: minimizing $\int_\mathbb{R}  \mid x - y \mid ^ 2 d\pi(x,y)$ is equivalent to maximizing $E_{\pi}(xy)$, which amounts to maximizing $cov(XY)$ with $X \sim \nu_1$ and $Y \sim \nu_2$. This maximum is reached when $X$ and $Y$ can be written as increasing functions (here $F_{\nu_1}^{-1}$ and $F_{\nu_2}^{-1}$) of the same uniform random variable $U$. Indeed $U=F_{\mu}(L)$ is uniformly distributed on $[0,1]$ when $L \sim \mu$. Therefore we have that:
\begin{equation}\label{lemma:Brenier}
W_2(\nu_1,\nu_2)=\sqrt{ E_{\mu} [   ( F_{\nu_1}^{-1} (F_{\mu} (L)) -F_{\nu_2}^{-1} (F_{\mu} (L))) ^2 }]
\end{equation}

Since $f_{\mu}$ is bounded from below by $\epsilon>0$ ( assumption \textbf{A1}) we conclude:

\[
\| \Phi_{\nu_1}-\Phi_{\nu_2} \|_{\infty}  \leq W_2(\nu_1,\nu_2) \sqrt{E_{\mu} [\frac{1}{ f_{\mu}(L)^2} ]}\leq W_2(\nu_1,\nu_2) \sqrt{\frac{1}{ \epsilon^2}} \]

Then we proved that the mapping  $\nu \rightarrow \Phi_{\nu}$ is $\sqrt{\frac{1}{ \epsilon^2}}$-Lipschitz continuous for the uniform norm.

We now prove that $\nu \rightarrow \Gamma_{\nu}$ is also Lipschitz continuous.

Observe that $\Gamma_{\nu}(s) = \Phi_{\nu}^{\sharp}(s)-\Phi_{\nu}^{\sharp}(0)$. Indeed from the definition of $\Gamma_{\nu}$ and $\Phi_{\nu}$ we get that the $\partial\Phi_{\nu}(\ell)=[\phi_{\nu}(\ell^-),\phi_{\nu}(\ell)]$ and thus by Fenchel lemma :
\[ \partial\Phi_{\nu}^{\sharp}(s)=[\phi_{\nu}^{-1}(s^-),\phi_{\nu}^{-1}(s)]=[\gamma_{\nu}(s^-),\gamma_{\nu}(s)]=\partial \Gamma_{\nu}(s) \]

The two functions $\Phi_{\nu}^{\sharp}$ and $\Gamma_{\nu}$ just differ by a constant, and since $\Gamma_{\nu}(0)=0$ we find $\Gamma_{\nu}(x)=\Phi_{\nu}^{\sharp}(x)-\Phi_{\nu}^{\sharp}(0)$. As well known Fenchel transform in an isometry for the uniform norm\footnote{Let indeed $f$ and $g$ be two lower semi continuous convex functions $\mathbb{R}^n \rightarrow \mathbb{R}$, then $\mid\mid f^{\sharp}-g^{\sharp} \mid\mid_{\infty} = \mid\mid f-g \mid\mid_{\infty}$. Indeed, for all $x \in \mathbb{R}$: $f^{\sharp}(x)  =  \sup_t   x.t - f(t) \leq  \sup_t  x.t - g(t)  + \| f-g \|_{\infty}= g^{\sharp}(x)+ \| f-g \|_{\infty}$. Interchanging $f$ and $g$ we get therefore for all $x$: $  |f^{\sharp}(x) - g^{\sharp}(x) |   \leq \| f-g \|_{\infty}$. Since the right hand side doesn't depend on $x$, we get: $\| f^{\sharp}-g^{\sharp} \|_{\infty} \leq \| f-g \|_{\infty}$. The reverse inequality follows from Fenchel lemma: $f^{\sharp \sharp}=f$ and $g^{\sharp \sharp}=g$. Therefore: $\| f-g \|_{\infty} = \| f^{\sharp \sharp}-g^{\sharp \sharp} \|_{\infty} \leq  \| f^{\sharp}-g^{\sharp} \|_{\infty}$ as announced.}. We conclude that the mapping $\nu \rightarrow \Gamma_{\nu}$ is also Lipschitz continuous for the uniform norm.
\end{proof}

\begin{lemma}\label{continuity_of_Pi_n}
If $W_2(\nu_k,\nu) \rightarrow 0$ then $W_2(\Pi_{\nu_k},\Pi_{\nu}) \rightarrow 0$.
\end{lemma}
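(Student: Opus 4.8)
The plan is to exploit the explicit description of $\Pi_{\nu}$ given in the Definition preceding Lemma \ref{lemma:nu}: $\Pi_{\nu}$ is the law of the pair $(L,\phi_{\nu}(L))$ when $L\sim\mu$, where $\phi_{\nu}(\ell)=F_{\nu}^{-1}(F_{\mu}(\ell))$. The central idea is that both $\Pi_{\nu_k}$ and $\Pi_{\nu}$ can be realized on the \emph{same} underlying randomness, namely a single variable $L\sim\mu$. This produces a natural coupling and reduces the two-dimensional Wasserstein distance to a one-dimensional quantity that has already been computed.

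Concretely, I would fix $L\sim\mu$ and consider the joint realization $\big((L,\phi_{\nu_k}(L)),(L,\phi_{\nu}(L))\big)$ on $\mathbb{R}^2\times\mathbb{R}^2$. Its two marginals are exactly $\Pi_{\nu_k}$ and $\Pi_{\nu}$, so this is an admissible coupling and $W_2^2(\Pi_{\nu_k},\Pi_{\nu})$ is bounded above by its transport cost. Since the first coordinates of the two points coincide (both equal $L$), the cost involves only the second coordinate:
\[
W_2^2(\Pi_{\nu_k},\Pi_{\nu})\leq E_{\mu}\big[\,|\phi_{\nu_k}(L)-\phi_{\nu}(L)|^2\,\big]
=E_{\mu}\big[\,\big(F_{\nu_k}^{-1}(F_{\mu}(L))-F_{\nu}^{-1}(F_{\mu}(L))\big)^2\,\big].
\]

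The final step is to recognize the right-hand side. The explicit one-dimensional Wasserstein formula established in the proof of Lemma \ref{lemma:Continuity_Phi_and_Gamma}, namely equation (\ref{lemma:Brenier}), states precisely that this expectation equals $W_2^2(\nu_k,\nu)$. Therefore $W_2^2(\Pi_{\nu_k},\Pi_{\nu})\leq W_2^2(\nu_k,\nu)$, and the assumed convergence $W_2(\nu_k,\nu)\to 0$ yields $W_2(\Pi_{\nu_k},\Pi_{\nu})\to 0$.

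There is no substantial obstacle here: the proof is essentially a one-line estimate once the right coupling is chosen. The only point requiring care is verifying that the proposed joint realization is genuinely a coupling of $\Pi_{\nu_k}$ and $\Pi_{\nu}$ (immediate from their definitions as pushforwards of $\mu$ under $\ell\mapsto(\ell,\phi_{\nu_k}(\ell))$ and $\ell\mapsto(\ell,\phi_{\nu}(\ell))$) and citing the already-proved identity (\ref{lemma:Brenier}) for the one-dimensional distance. Both ingredients are available from earlier in the paper, so the argument is short and self-contained.
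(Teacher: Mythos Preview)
Your proof is correct and is essentially identical to the paper's own argument: both couple $\Pi_{\nu_k}$ and $\Pi_{\nu}$ via $X_1=(L,\phi_{\nu_k}(L))$ and $X_2=(L,\phi_{\nu}(L))$ with $L\sim\mu$, observe that the first coordinates cancel so that $W_2(\Pi_{\nu_k},\Pi_{\nu})^2\le\|\phi_{\nu_k}(L)-\phi_{\nu}(L)\|_{L^2}^2$, and then invoke equation~(\ref{lemma:Brenier}) to identify the right-hand side with $W_2(\nu_k,\nu)^2$.
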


\begin{proof}

Let $L$ be a random variable with law $\mu$.
Let $X_1=(L,\phi_{\nu_k}(L))$ and  $X_2=(L,\phi_{\nu}(L))$. Then $X_1 \sim \Pi_{\nu_k}$ and $X_2 \sim \Pi_{\nu}$.
\[ W_2(\Pi_{\nu_k},\Pi_{\nu})^2 \leq \| X_1-X_2 \|^2_{L^2} =  \| L-L \|^2_{L^2} + \| \phi_{\nu_k}(L)-\phi_{\nu}(L) \|^2_{L^2} = W_2(\nu_k,\nu)^2 \]

where the last equality follows from equation (\ref{lemma:Brenier}).
\end{proof}

\begin{lemma}\label{le:12continuity}
If $W_2(\lambda_k,\lambda) \rightarrow 0$ and $W_2(\nu_k,\nu) \rightarrow 0$ then:

1/ $\| \Psi_{\nu_k,\lambda_k}-\Psi_{\nu,\lambda} \|_{\infty} \rightarrow 0$

2/ For all continuous function $\Theta$ such that $\frac{\Theta(x)}{1+x^2}$ is bounded, we have:

\[ E_{\Pi_{\nu_n}}[\Theta(S_n)H'(S_n L - \Psi_{\nu_k,\lambda_k}(S_n))] \rightarrow E_{\Pi_{\nu}}[\Theta(S_n)H'(S_n L - \Psi_{\nu,\lambda}(S_n))]
\]
\end{lemma}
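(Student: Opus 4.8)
The plan is to prove the two claims in order, the first feeding directly into the second.

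For claim 1, I would start from the decomposition $\Psi_{\nu_k,\lambda_k}-\Psi_{\nu,\lambda} = (\Gamma_{\nu_k}-\Gamma_\nu) - \left(E_{\lambda_k}[\Gamma_{\nu_k}]-E_\lambda[\Gamma_\nu]\right)$, which comes straight from the definition (\ref{def:Psi}). The uniform norm of the first bracket tends to zero by Lemma \ref{lemma:Continuity_Phi_and_Gamma}. For the constant term I would split $E_{\lambda_k}[\Gamma_{\nu_k}]-E_\lambda[\Gamma_\nu]$ into $E_{\lambda_k}[\Gamma_{\nu_k}-\Gamma_\nu]$ plus $E_{\lambda_k}[\Gamma_\nu]-E_\lambda[\Gamma_\nu]$. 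The former is bounded in absolute value by $\|\Gamma_{\nu_k}-\Gamma_\nu\|_\infty\to 0$, again by Lemma \ref{lemma:Continuity_Phi_and_Gamma}; the latter tends to zero because $\Gamma_\nu$ is a fixed continuous function of at most quadratic growth and $W_2(\lambda_k,\lambda)\to 0$ is equivalent to weak convergence in $P_2(\mathbb{R})$ (Proposition \ref{prop:W_2_for_weak}). The growth bound is immediate: since $\mu$ is supported on $[0,1]$, the function $\gamma_\nu=F_\mu^{-1}\circ F_\nu$ takes values in $[0,1]$, so $|\Gamma_\nu(s)|\le|s|$.

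For claim 2, I would realize both expectations on a single probability space. Fixing $L\sim\mu$ and setting $S^{(k)}:=\phi_{\nu_k}(L)$ and $S:=\phi_\nu(L)$, the pairs $(L,S^{(k)})$ and $(L,S)$ are distributed as $\overline{\Pi}_{\nu_k}$ and $\overline{\Pi}_\nu$, and formula (\ref{lemma:Brenier}) gives $\|S^{(k)}-S\|_{L^2}=W_2(\nu_k,\nu)\to 0$. The statement then reduces to showing $E[\Theta(S^{(k)})H'(S^{(k)}L-\Psi_{\nu_k,\lambda_k}(S^{(k)}))]\to E[\Theta(S)H'(SL-\Psi_{\nu,\lambda}(S))]$, and I would argue by the subsequence principle. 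Given any subsequence, I extract a further one along which $S^{(k)}\to S$ almost surely. The argument of $H'$ then converges a.s., since $S^{(k)}L\to SL$ while $|\Psi_{\nu_k,\lambda_k}(S^{(k)})-\Psi_{\nu,\lambda}(S)|\le \|\Psi_{\nu_k,\lambda_k}-\Psi_{\nu,\lambda}\|_\infty + |\Psi_{\nu,\lambda}(S^{(k)})-\Psi_{\nu,\lambda}(S)|\to 0$ by claim 1 together with the continuity of the convex function $\Psi_{\nu,\lambda}$. Continuity of $H'$ and of $\Theta$ then yields a.s. convergence of the whole integrand to $\Theta(S)H'(SL-\Psi_{\nu,\lambda}(S))$. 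Finally the hypothesis $|\Theta(x)|\le C(1+x^2)$ and the bound $0<H'<K$ give $|\Theta(S^{(k)})H'(\cdots)|\le KC(1+(S^{(k)})^2)$; since $S^{(k)}\to S$ in $L^2$ forces $(S^{(k)})^2\to S^2$ in $L^1$, this dominating family converges in $L^1$, and the generalized dominated convergence theorem (Vitali) upgrades the a.s. convergence of the integrands to convergence of their expectations. As the limit is the same for every subsequence, the whole sequence converges.

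I expect the last step to be the main obstacle: the integrand is only controlled by a quantity of quadratic growth in $S^{(k)}$, so no fixed integrable dominating function is available and ordinary dominated convergence fails. The crux is to extract the uniform integrability of $\{(S^{(k)})^2\}$ from the $L^2$-convergence $S^{(k)}\to S$ — equivalently the convergence $(S^{(k)})^2\to S^2$ in $L^1$, obtained from $\|(S^{(k)})^2-S^2\|_{L^1}\le \|S^{(k)}-S\|_{L^2}\,\|S^{(k)}+S\|_{L^2}$ with the second factor bounded — and to feed this into the generalized dominated convergence theorem. The preliminary a.s. convergence of the $H'$-argument is routine once the uniform convergence of claim 1 is in hand.
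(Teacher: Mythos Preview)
Your proof of claim 1 is essentially the paper's: same decomposition via \eqref{def:Psi}, same appeal to Lemma \ref{lemma:Continuity_Phi_and_Gamma} for the $\Gamma$-term, and the same observation $|\Gamma_\nu(s)|\le |s|$ to invoke weak convergence in $P_2(\mathbb{R})$ for the constant.

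For claim 2 your argument is correct but genuinely different from the paper's. The paper does not couple on a common space; it splits the difference as $I_k+J_k$ where $I_k$ compares the two $\Psi$'s under the same measure $\Pi_{\nu_k}$ and $J_k$ compares the two measures with the integrand frozen at $\Psi_{\nu,\lambda}$. The term $I_k$ is handled by the \emph{Lipschitz} continuity of $H'$ from \textbf{A2}, giving $I_k\le \hat K\,\|\Psi_{\nu_k,\lambda_k}-\Psi_{\nu,\lambda}\|_\infty\, E_{\nu_k}[|\Theta|]$; the term $J_k$ is handled by Lemma \ref{continuity_of_Pi_n} ($W_2(\Pi_{\nu_k},\Pi_\nu)\to 0$) together with Proposition \ref{prop:W_2_for_weak}, since $(\ell,s)\mapsto \Theta(s)H'(s\ell-\Psi_{\nu,\lambda}(s))$ is continuous with quadratic growth. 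Your route instead realizes $\Pi_{\nu_k}$ and $\Pi_\nu$ on one space via $S^{(k)}=\phi_{\nu_k}(L)$, passes to an a.s.-convergent subsequence, and closes with the generalized dominated convergence theorem using the $L^1$-convergent majorants $KC(1+(S^{(k)})^2)$. This buys you something: you only use continuity and boundedness of $H'$, not its Lipschitz constant, and you bypass Lemma \ref{continuity_of_Pi_n}. The price is the subsequence machinery and the appeal to Vitali/Pratt rather than a one-line bound. Both arguments are valid within the paper's hypotheses.
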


\begin{proof}
\begin{align*}
1/ ~~ \| \Psi_{\nu_k,\lambda_k}-\Psi_{\nu,\lambda} \|_{\infty} =& \| (\Gamma_{\nu_k} - E_{\lambda_k}[\Gamma_{\nu_k}]) - (\Gamma_{\nu} - E_{\lambda}[\Gamma_{\nu}] ) \|_{\infty}\\
\leq& \| \Gamma_{\nu_k}-\Gamma_{\nu} \|_{\infty} +  \| E_{\lambda_k}[\Gamma_{\nu_k}]-E_{\lambda_k}[\Gamma_{\nu}] \|_{\infty}+|E_{\lambda_k}[\Gamma_{\nu}]-E_{\lambda}[\Gamma_{\nu}] |\\
\leq& 2 \| \Gamma_{\nu_k}-\Gamma_{\nu} \|_{\infty} +|E_{\lambda_k}[\Gamma_{\nu}]-E_{\lambda}[\Gamma_{\nu}] |
\end{align*}

The first term of the right hand side goes to zero according to Lemma \ref{lemma:Continuity_Phi_and_Gamma}. Next observe that $\partial \Gamma_{\nu}=[\gamma(s^-),\gamma(s)] \subset [0,1]$. Indeed,
$\gamma_{\nu}(s)=F_{\mu}^{-1} (F_{\nu} (s))$ belongs to $[0,1]$ since the support of $\mu$ is $[0,1]$. Furthermore, by definition $\Gamma_{\nu}$ is a continuous function that satisfies $\Gamma_{\nu}(0)=0$. Therefore, $|\Gamma_{\nu}(x)|\leq |x| \leq C(1+x^2)$ for a constant $C$. Since $W_2(\nu_k,\nu)$ goes to zero, we conclude with Proposition \ref{prop:W_2_for_weak} that $\nu_k$ converges weakly in $P_2(\mathbb{R})$ and therefore $E_{\lambda_k}[\Gamma_{\nu}] \rightarrow E_{\lambda}[\Gamma_{\nu}]$

 $\Gamma$ is further continuous as claimed in Proposition \ref{prop:W_2_for_weak}, the last term goes also to zero.

 $$ 2/ ~~
| E_{\Pi_{\nu_k}}[\Theta(S_n)H'(S_n L - \Psi_{\nu_k,\lambda_k}(S_n))] - E_{\Pi_{\nu}}[\Theta(S_n)H'(S_n L - \Psi_{\nu,\lambda}(S_n))] | \leq I_k + J_k
$$ where $$I_k:=| E_{\Pi_{\nu_k}}[\Theta(S_n)H'(S_n L - \Psi_{\nu_k,\lambda_k}(S_n))]- E_{\Pi_{\nu_k}}[\Theta(S)H'(S_n L - \Psi_{\nu,\lambda}(S_n))]|$$ and

$$J_k:= | E_{\Pi_{\nu_k}}[\Theta(S_n)H'(S_n L - \Psi_{\nu,\lambda}(S_n))]- E_{\Pi_{\nu}}[\Theta(S_n)H'(S_n L - \Psi_{\nu,\lambda}(S_n))] |.$$
According to \textbf{A1}, $H'$ is Lipschitz continuous. Let $\hat{K}$ denote the Lipschitz constant, then:
\[I_k \leq E_{\Pi_{\nu_k}}[| \Theta(S_n) | \hat{K} \|\Psi_{\nu_k,\lambda_k}-\Psi_{\nu,\lambda} \|_{\infty}]=\hat{K} \|\Psi_{\nu_k,\lambda_k}-\Psi_{\nu,\lambda} \|_{\infty}E_{\nu_k}[| \Theta(S_n) | ]\]
Since $| \Theta(S_n) |\leq C(1+S_n^2)$ and $W_2(\nu_k,\nu)\rightarrow 0$ we get with Proposition \ref{prop:W_2_for_weak} that $E_{\nu_k}[| \Theta(S_n) | ] \rightarrow E_{\nu}[| \Theta(S_n) | ]< \infty$. On the other hand  $\|\Psi_{\nu_k,\lambda_k}-\Psi_{\nu,\lambda} \|_{\infty} \rightarrow 0$ according to the first claim of this lemma. $I_k$ converges therefore to 0.\\

The map $(L,S_n) \rightarrow \Theta(S_n)H'(S_n L - \Psi_{\nu,\lambda}(S_n))$ is continuous and is also bounded by $C(1+\|(L,S_n)\|^2)$ since $H'$ is itself bounded. Since $\Pi_{\nu_k}$ converges to $\Pi_{\nu}$ in $W_2$, it follows from Proposition \ref{prop:W_2_for_weak} that $J_k$ goes to zero.
\end{proof}

We are now ready to prove the Proposition \ref{cor:continuity}. 

\begin{proof}[Proof of Proposition \ref{cor:continuity}.]~~\\
We have to prove that if $\Theta$ continuous and satisfies $\mid \Theta(s) \mid \leq C(1+s^2)$, then:

\[ E_{T_{\lambda_k}(\nu_k)}[\Theta] \rightarrow E_{T_{\lambda}(\nu)}[\Theta] \]

According to the definition of $T_{\lambda_k}(\nu_k)$ this amounts to show that:
 
\begin{equation}\label{eq_avec_c}
E_{\nu_k}[\Theta(S_n).\alpha_{\nu_k,\lambda_k}. Y_{\nu_k,\lambda_k}(S_n)] \rightarrow E_{\nu}[\Theta(S_n).\alpha_{\nu,\lambda}. Y_{\nu,\lambda}(S_n)]
\end{equation}

We first prove that:

\begin{equation}\label{eq_sans_c}
E_{\nu_k}[\Theta(S_n). Y_{\nu_k,\lambda_k}(S_n)]\rightarrow E_{\nu}[\Theta(S_n). Y_{\nu,\lambda}(S_n)]
\end{equation}

Due to the definition of $Y_{\nu_k,\lambda_k}$ we get:

\begin{align*}
E_{\nu_k}[\Theta(S_n). Y_{\nu_k,\lambda_k}(S_n)]=&
E_{\nu_k}[\Theta(S_n).E_{\Pi_{\nu_k}}[H'(LS_n-\Psi_{\nu_k,\lambda_k}(S_n))|S_n]]\\
=&E_{\Pi_{\nu_k}}[\Theta(S_n).E_{\Pi_{\nu_k}}[H'(LS_n-\Psi_{\nu_k,\lambda_k}(S_n))|S_n]]\\
=&E_{\Pi_{\nu_k}}[\Theta(S_n).H'(LS_n-\Psi_{\nu_k,\lambda_k}(S_n))]
\end{align*}
and we have a similar formula for $E_{\nu}[\Theta(S_n). Y_{\nu,\lambda}(S_n)]$. According to claim 2 in Lemma \ref{le:12continuity}:

$$E_{\Pi_{\nu_k}}[\Theta(S_n).H'(LS_n-\Psi_{\nu_k,\lambda_k}(S))] \rightarrow E_{\Pi_{\nu}}[\Theta(S_n).H'(LS_n-\Psi_{\nu,\lambda}(S_n))]$$
and formula (\ref{eq_sans_c}) follows.

According to the Definition \ref{def:Y_n} we get $\alpha_{\nu_k,\lambda_k}=\frac{1}{E_{\nu_k}[Y_{\nu_k,\lambda_k}]}$. With formula (\ref{eq_sans_c}) for the particular $\Theta \equiv 1$, we get that $E_{\nu_k}[Y_{\nu_k,\lambda_k}] \rightarrow E_{\nu}[Y_{\nu,\lambda}]$. Since $Y_{\nu,\lambda}$ is bounded from below by $\epsilon>0$ (assumption \textbf{A2} on $H$), we conclude then that $\alpha_{\nu_k,\lambda_k} \rightarrow \alpha_{\nu,\lambda}$. Finally combining this result with formula (\ref{eq_sans_c}), we get the convergence announced in formula (\ref{eq_avec_c}) and the corollary is proved.
\end{proof}

\subsection{Proof of Proposition \ref{prop:equiv}}\label{Annexe:equivalence}


\begin{proof}
Let $\nu$ satisfy the equation $T_{\overline{\lambda}_{\infty}}(\nu)=\overline{\lambda}_{\infty}$. This implies that $\overline{\lambda}_{\infty}$ has a strictly positive density with respect to $\nu$, and therefore $\nu$ has a density with respect to $\overline{\lambda}_{\infty}$. In turn this implies that it has also a density $f_{\nu}$ with respect to the Lebesgue measure. This implies in particular that $f_{\nu}$ is continuous.

We first deal with the smoothness of $\Psi_{\nu,\overline{\lambda}_{\infty}}$. Remember that  $\Psi_{\nu,\overline{\lambda}_{\infty}}$ differs from $\Gamma_{\nu}$ just by a constant. $\Gamma_{\nu}$ was defined as an integral of $\gamma_{\nu}(s)=F_{\mu}^{-1}(F_{\nu}(s))$. Since $F_{\mu}$ is a strictly increasing and continuous map from $[0,1]$ to $[0,1]$, its inverse is itself continuous. Since $F_{\nu}$ is also continuous, it follows that $\Gamma_{\nu}$ and $\Psi_{\nu,\overline{\lambda}_{\infty}}$ are $C^1$ and $\Psi_{\nu,\overline{\lambda}_{\infty}}'=\gamma_{\nu}$.

We next prove that $\gamma_{\nu}$ is absolutely continuous\footnote{A function $f:\mathbb{R}\rightarrow\mathbb{R}$ is absolutely continuous (see definition 7.17 in \citet{rudin}) if for all $\epsilon>0$, and for all sequences of disjoint real intervals $[a_n,b_n]$, there exists $\delta$ such that:
\[ \sum_{n \geq 0} |b_n-a_n|< \delta \Rightarrow \sum_{n \geq 0} |f(a_n)-f(b_n)| < \epsilon \] 

}. This will imply on one hand  (see theorem 7.18 in \citet{rudin}) the existence of a function $g$ integrable with respect to the Lebesgue measure such that $\gamma_{\nu}(s)=\int_{-\infty}^s g(t)dt$ and on the other hand (by the Lebesgue differentiation theorem) that $\gamma_{\nu}$ is almost surely differentiable and for almost every $s$: $\gamma_{\nu}'(s)=g(s)$. The first claim of the proposition will then be proved by establishing that $g$, which is only defined up to negligeable set, admits a continuous version.

\underline{$\nu$ is absolutely continuous:}\\
Since $\nu$ is absolutely continuous with respect to the Lebesgue measure, its cumulative distribution function $F_{\nu}$ is an absolutely continuous function\footnote{If $\nu$ is absolutely continuous with respect to the Lebesgue measure, we have $F_{\nu}(x)-F_{\nu}(a)=\int_a^x f_{\nu}(t)dt$. Theorem 7.18 in \citet{rudin} implies that $F_{\nu}$ is absolutely continuous.}.

We next observe that $F_{\mu}^{-1}$ is Lipschitz continuous. According to \textbf{A1}, $f_{\nu}$ is $C^1$ on [$0,1]$ and strictly positive. Let then $\kappa>0$ be such that $\kappa < f_{\mu}$. For $\tilde{s}<s$, we set $b=F_{\mu}^{-1}(s)$ and $\tilde{b}=F_{\mu}^{-1}(\tilde{s})$ then:

\[ | s - \tilde{s} | = s - \tilde{s} = F_{\mu}(b)-F_{\mu}(\tilde{b})= \int_{\tilde{b}}^b f_{\mu}(x)dx \geq \kappa (b-\tilde{b})
\] 

Therefore, we have:

\[   |F_{\mu}^{-1}(s)-F_{\mu}^{-1}(\tilde{s})|  \leq \frac{1}{\kappa} |s-\tilde{s}|.
\]

The function $\gamma_{\nu}=F_{\mu}^{-1}(F_{\nu}(s))$ introduced in Definition \ref{def:Phi} is therefore absolutely continuous. Indeed, since $F_{\nu}$ is absolutely continuous, for $\epsilon>0$ and $[a_n,b_n]$ disjoint real intervals, there exists $\delta$ such that:
\[ \sum_{n \geq 0} |b_n-a_n|< \delta \Rightarrow \sum_{n \geq 0} |(F_{\nu}(a_n)-F_{\nu}(b_n)| < \kappa \epsilon \]

Suppose that  $\sum_{n \geq 0} |b_n-a_n|< \delta$. Then:

\[ \sum_{n \geq 0} |F_{\mu}^{-1}(F_{\nu}(a_n))-F_{\mu}^{-1}(F_{\nu}(b_n))| \leq \sum_{n \geq 0} \frac{1}{\kappa} |(F_{\nu}(a_n)-F_{\nu}(b_n)| \leq \epsilon.
\]

Since $f_{\mu}$ is $C^1$ (see conditions \textbf{A1}) and positive, $F_{\mu}^{-1}$ is itself $C^1$ and $F_{\mu}^{-1\prime}(u)=\frac{1}{f_{\mu}(F^{-1}(u))}$. Since $F_{\nu}$ is absolutely continuous, it is almost surely differentiable and $F_{\nu}'(s)=f_{\nu}(s)$, according to Lebesgue differentiation theorem. Therefore, with the composition rule, we conclude:

\begin{equation}\label{eq:eq_diff_density}
g(s)=\gamma_{\nu}'(s)=\frac{f_{\nu}(s)}{f_{\mu}(F_{\mu}^{-1}(F_{\nu}(s)))}=\frac{f_{\nu}(s)}{f_{\mu}(\gamma_{\nu}(s))}
\end{equation}

Since $\Psi_{\nu,\overline{\lambda}_{\infty}}$ is $C^1$, and $\overline{\Pi}_{\nu}$ satisfies \textbf{(C4)}: $\overline{\Pi}_{\nu}(L \in \partial\Psi_{\nu,\overline{\lambda}_{\infty}}(S))=1$, we infer that $L$ is almost surely equals to $\Psi_{\nu,\overline{\lambda}_{\infty}}^{\prime}(S)$ under $\overline{\Pi}_{\nu}$ and thus:

 $$E_{\overline{\Pi}_{\nu}} [H'(LS-\Psi_{\nu,\overline{\lambda}_{\infty}}(S))|S]=H'(\Psi_{\nu,\overline{\lambda}_{\infty}}^{\prime}(S)S-\Psi_{\nu,\overline{\lambda}_{\infty}}(S))$$

Our equation $T_{\overline{\lambda}_{\infty}}(\nu)=\overline{\lambda}_{\infty}$ becomes then:

 $$\frac{\partial\overline{\lambda}_{\infty}}{\partial\nu}=\alpha_{\nu,\overline{\lambda}_{\infty}}E_{\overline{\Pi}_{\nu}} [H'(LS-\Psi_{\nu,\overline{\lambda}_{\infty}}(S))|S]=\alpha_{\nu,\overline{\lambda}_{\infty}}.H'(\Psi_{\nu,\overline{\lambda}_{\infty}}^{\prime}(S)S-\Psi_{\nu,\overline{\lambda}_{\infty}}(S))$$

Finally $\frac{\partial\overline{\lambda}_{\infty}}{\partial\nu}$ is also the quotient  $\frac{\mathcal{N}}{f_{\nu}}$ of the densities with respect to the Lebesgue measure. We get therefore for almost every $s$:

$$ 
\mathcal{N}(s)=\alpha_{\nu,\overline{\lambda}_{\infty}}.H'(\Psi_{\nu,\overline{\lambda}_{\infty}}^{\prime}(s)s-\Psi_{\nu,\overline{\lambda}_{\infty}}(s))f_{\nu}(s)
$$

Combining this equation with equation (\ref{eq:eq_diff_density}), we get for almost every $s$:

$$ 
\mathcal{N}(s)=\alpha_{\nu,\overline{\lambda}_{\infty}}.H'(\Psi_{\nu,\overline{\lambda}_{\infty}}^{\prime}(s)s-\Psi_{\nu,\overline{\lambda}_{\infty}}(s))f_{\mu}(\gamma_{\nu}(s))g(s)
$$

From this equation we get $g(s)=\frac{\alpha_{\nu,\overline{\lambda}_{\infty}}.H'(\Psi_{\nu,\overline{\lambda}_{\infty}}^{\prime}(s)s-\Psi_{\nu,\overline{\lambda}_{\infty}}(s))f_{\mu}(\gamma_{\nu}(s))}{\mathcal{N}(s)}$ almost surely.

Since the right hand side of this equality is continuous, it is the continuous version of $g$ we were seeking. $\Psi_{\nu,\overline{\lambda}_{\infty}}$ is thus $C^2$ which is the first claim of our proposition and for every $s$: 

$$\Psi_{\nu,\overline{\lambda}_{\infty}}^{\prime\prime}(s)=\frac{\alpha_{\nu,\overline{\lambda}_{\infty}}.H'(\Psi_{\nu,\overline{\lambda}_{\infty}}^{\prime}(s)s-\Psi_{\nu,\overline{\lambda}_{\infty}}(s))f_{\mu}( \Psi_{\nu,\overline{\lambda}_{\infty}}^{\prime}(s))}{\mathcal{N}(s)}$$

The pair $(\psi,c):=(\Psi_{\nu,\overline{\lambda}_{\infty}},\frac{1}{\alpha_{\nu,\overline{\lambda}_{\infty}}})$ is then a solution of the first equation of the differential problem $\mathcal{D}$.

It also satisfies the following ones. Indeed, since $[0,1]$ is the support of $\mu$ we get:

\[
\begin{cases}
lim_{s \rightarrow -\infty} \Psi_{\nu,\overline{\lambda}_{\infty}}^{\prime}(s)=lim_{s \rightarrow -\infty} F_{\mu}^{-1}(F_{\nu}(s))= 0\\
lim_{s \rightarrow +\infty} \Psi_{\nu,\overline{\lambda}_{\infty}}^{\prime}(s)=lim_{s \rightarrow + \infty} F_{\nu}^{-1}(F_{\mu}(s))=1.
\end{cases}
\]

Furthermore we have $ \Psi_{\nu,\overline{\lambda}_{\infty}}=\Gamma_{\nu}-E_{\overline{\lambda}_{\infty}}[\Gamma_{\nu}]$ and thus:

\[\int_{-\infty}^{+\infty}  \Psi_{\nu,\overline{\lambda}_{\infty}}(z)\mathcal{N}(z)dz= E_{\overline{\lambda}_{\infty}}[\Psi_{\nu,\overline{\lambda}_{\infty}}] =0\]
\end{proof}

\subsection{The unique solution of the differential system $\mathcal{D}$.}\label{annexe:unique_solution}

The proof of Theorem \ref{Thm_uniqueness} is made of several lemma that are presented below.

Let $(\psi_1,c_1)$ and $(\psi_2,c_2)$ be two solutions of the problem $\mathcal{D}$. Without loss of generality we may assume that $c_1 \geq c_2$. Furthermore, we have $c_2>0$. Indeed $\mathcal{D}$-2 and $\mathcal{D}$-3 imply that $\psi_i''$ must be strictly positive at some point $s$. At that point $\mathcal{D}$-1 indicates that $c_2>0$ since both $f_{\mu}$, $H'$ and $\mathcal{N}$ are positive.\\

For all this section, we set $\theta:= \psi_1-\psi_2$. $\theta$ is then a $C^2$ function. Indeed, $\psi_i$ solves  which already implies that $\psi_i$ has a second order derivative and is therefore $C^1$. Since we may express the second order derivative from equation $\mathcal{D}$-1 as a continuous function of $s$,$\psi(s)$ and $\psi'(s)$, we have that $\psi''(s)$ is itself continuous and $\psi(s)$ is $C^2$.
\\

According to $\mathcal{D}$-4 we have $\int_{-\infty}^{+\infty}  \theta(z)\mathcal{N}(z)dz=0$. Since $\theta$ is a continuous function and $\mathcal{N}(z)>0$ for all $z$, there exists $s_0$ such that $\theta(s_0)=0$.\\

Let us define:
 $$\Gamma^{+}:=\{s > s_0 \mid \theta'(s)=0 \}$$

and:

$$\Gamma^{-}:=\{s < s_0 \mid \theta'(s)=0 \}.$$ 
We also define $s^{+}:= \inf \Gamma^{+}$ and $s^{-}:= \sup \Gamma^{-}$

Observe that $\theta'(s)$ can not vanish on $]s_0,s^+[$ nor on $=]s^-,s_0[$. $\theta'$ has thus a constant sign on each interval.

\begin{lemma}\label{Not_positive}

Suppose that $s_0<s^+$. Then $\theta'<0$ on $]s_0,s^+[$.
\end{lemma}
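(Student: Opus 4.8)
The plan is to argue by contradiction. Since $\theta'$ has a constant nonzero sign on $]s_0,s^+[$, ruling out $\theta'<0$ amounts to excluding $\theta'>0$ on this interval, so suppose $\theta'>0$ on $]s_0,s^+[$. Integrating $\theta'$ from $s_0$, where $\theta(s_0)=0$, gives $\theta(s)=\int_{s_0}^{s}\theta'(t)\,dt>0$ for every $s\in{}]s_0,s^+]$; in particular $\theta(s^+)>0$. Moreover, since $s^+=\inf\Gamma^+>s_0$ and $\theta'$ is continuous, $s^+$ is a zero of $\theta'$, i.e. $\theta'(s^+)=0$, equivalently $\psi_1'(s^+)=\psi_2'(s^+)$.

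Next I would read off from $\mathcal{D}$-(1) the explicit second derivative $\psi_i''(s)=c_i\mathcal{N}(s)/\big(f_\mu(\psi_i'(s))H'(\xi_i(s))\big)$, where $\xi_i(s):=s\psi_i'(s)-\psi_i(s)$; this is licit because every factor is continuous and $f_\mu,H',\mathcal{N}>0$. Evaluating $\theta''=\psi_1''-\psi_2''$ at $s^+$ and using $\psi_1'(s^+)=\psi_2'(s^+)$, so that the density factors coincide, yields
\[\theta''(s^+)=\frac{\mathcal{N}(s^+)}{f_\mu\big(\psi_1'(s^+)\big)}\left(\frac{c_1}{H'(\xi_1(s^+))}-\frac{c_2}{H'(\xi_2(s^+))}\right).\]
Because the first derivatives agree at $s^+$, one has $\xi_1(s^+)-\xi_2(s^+)=-(\psi_1(s^+)-\psi_2(s^+))=-\theta(s^+)<0$, hence $\xi_1(s^+)<\xi_2(s^+)$. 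Strict convexity of $H$ makes $H'$ strictly increasing, so $0<H'(\xi_1(s^+))<H'(\xi_2(s^+))$ and $1/H'(\xi_1(s^+))>1/H'(\xi_2(s^+))>0$; combined with $c_1\ge c_2>0$ this forces the bracket, and therefore $\theta''(s^+)$, to be strictly positive.

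The local behaviour of $\theta'$ at $s^+$ supplies the opposite inequality: for $s<s^+$ close to $s^+$ we have $\theta'(s)>0=\theta'(s^+)$, so each difference quotient $\big(\theta'(s^+)-\theta'(s)\big)/(s^+-s)$ is negative, and letting $s\uparrow s^+$ (using that $\theta$ is $C^2$) gives $\theta''(s^+)\le 0$. This contradicts $\theta''(s^+)>0$, so the hypothesis $\theta'>0$ is untenable and $\theta'<0$ on $]s_0,s^+[$, as claimed.

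The delicate point — and the step I would treat most carefully — is that the whole comparison hinges on evaluating precisely at $s^+$: the equality $\theta'(s^+)=0$ forces $\psi_1'(s^+)=\psi_2'(s^+)$ and thereby cancels the factors $f_\mu(\psi_i'(s^+))$, on whose monotonicity we have no information, so that at a generic interior point the sign of $\theta''$ would be genuinely indeterminate; only at such a coincidence point do the two structural ingredients ($c_1\ge c_2$ and $H'$ increasing, channelled through $\xi_1(s^+)-\xi_2(s^+)=-\theta(s^+)<0$) control it. I would also note that the argument needs $s^+$ to be a genuine finite zero of $\theta'$, which holds whenever $\Gamma^+\neq\emptyset$; the degenerate possibility $\Gamma^+=\emptyset$ (formally $s^+=+\infty$) has to be dispatched separately using the asymptotics $\theta'(s)\to 0$ coming from $\mathcal{D}$-(2)–(3) together with the surrounding uniqueness argument, rather than by the endpoint computation above.
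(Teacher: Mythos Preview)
Your argument for the case $s^+<\infty$ is correct and essentially identical to the paper's. The genuine gap is the case $s^+=+\infty$ (i.e.\ $\Gamma^+=\emptyset$), which you only flag without proving. This case is not a formality: it occupies roughly half of the paper's proof and requires ideas that go beyond ``$\theta'(s)\to 0$ together with the surrounding uniqueness argument''. In particular, appealing to the uniqueness argument is circular, since this very lemma is one of its ingredients.

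Here is what the paper actually does when $s^+=+\infty$. Set $\Lambda_i(s):=s\psi_i'(s)-\psi_i(s)$ and $R(u):=\theta'(F_{\mathcal N}^{-1}(u))$. One computes $R'(u)=\theta''(s)/\mathcal N(s)$ and, using $\mathcal D$-(1) together with $\psi_i'(s)\to 1$, shows
\[
\lim_{u\to 1}R'(u)\;\ge\;\frac{c_2}{f_\mu(1)}\left(\frac{1}{H'(\lim\Lambda_1)}-\frac{1}{H'(\lim\Lambda_2)}\right).
\]
The existence and finiteness of $\lim_{s\to\infty}\Lambda_i(s)$ is not free: it uses $\Lambda_i'(s)=s\psi_i''(s)\ge 0$ for $s>0$ and the bound $\psi_i''(s)\le K\mathcal N(s)$ coming from the uniform bounds on $f_\mu$ and $H'$. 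The ordering $\lim\Lambda_1<\lim\Lambda_2$ also requires work: one first proves $|\theta'(s)|\le 2K\int_s^\infty\mathcal N$, whence $s\theta'(s)\to 0$, and then $\lim(\Lambda_1-\Lambda_2)=-\lim\theta(s)<0$ since $\theta$ is strictly increasing on $]s_0,\infty[$ with $\theta(s_0)=0$. This yields $\lim_{u\to 1}R'(u)>0$, which contradicts the facts that $R>0$ on $]F_{\mathcal N}(s_0),1[$ and $R(u)\to 0$ as $u\to 1$. Without these estimates---in particular the decay $s\theta'(s)\to 0$, which does \emph{not} follow from $\theta'\to 0$ alone---the infinite-endpoint case remains open.
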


\begin{proof}
~~\\

Assume on the contrary that $s_0<s^+$ and $\theta'>0$ on $]s_0,s^+[$.\\

\underline{First case:} suppose that $s^+<+\infty$.

Since $\theta$ is increasing on $]s_0,s^+[$:

\begin{align}\label{eq1}
\theta(s^+)>\theta(s_0)=0
\end{align}

and also, by definition of $s^+$, 

\begin{align}\label{eq2} \theta'(s^+)=0, \text{ so } \alpha:= \psi'_1(s^+)=\psi'_2(s^+) 
\end{align}

From $\mathcal{D}$-1 we get:

\begin{align*}
&\theta''(s^+)&=& \frac{\mathcal{N}(s^+)}{f_{\mu}(\alpha)}(\frac{c_1}{H'(s^+\alpha-\psi_1(s^+))}-\frac{c_2}{H'(s^+\alpha-\psi_2(s^+))})\\
&&\geq& \frac{\mathcal{N}(s^+)c_2}{f_{\mu}(\alpha)}(\frac{1}{H'(s^+\alpha-\psi_1(s^+))}-\frac{1}{H'(s^+\alpha-\psi_2(s^+))})>0\\
\end{align*}
since $\psi_1(s^+)>\psi_2(s^+)$, as indicates equation (\ref{eq1}) and since $H'$ is strictly increasing and $c_2>0$. But this is not possible since $\theta'(s^+)=0$ and $\theta'>0$ on $]s_0,s^+[$ (which implies that $\theta''(s^+) \leq 0$).

~~\\

\underline{Second case:} suppose now that $s^+ = +\infty$. It is convenient in this case to introduce the function $R$ on $[0,1]$:  $R(u)=\theta'(F_{\mathcal{N}}^{-1}(u))$ where $F_{\mathcal{N}}$ is the cumulative function of the normal law $\mathcal{N}(0,1)$. We first prove that $lim_{u \rightarrow 1} R'(u)>0$. Indeed:

\begin{equation}\label{eq:R'}
\begin{array}{rl}
\displaystyle lim_{u \rightarrow 1} R'(u)=&\displaystyle lim_{s \rightarrow +\infty} \frac{\theta''(s)}{\mathcal{N}(s)}\\
\displaystyle=& \displaystyle lim_{s \rightarrow +\infty}\frac{c_1}{f_{\mu}(1)H'(\Lambda_1(s))}-\frac{c_2}{f_{\mu}(1)H'(\Lambda_2(s))}\\
\displaystyle \geq& \displaystyle lim_{s \rightarrow +\infty}\frac{c_2}{f_{\mu}(1)} (\frac{1}{H'(\Lambda_1(s))}-\frac{1}{H'(\Lambda_2(s))})    
\end{array}  
\end{equation}
where $\Lambda_i:= s\psi'_i(s)-\psi_i(s)$. 

Now we claim that both $\Lambda_i(s)$ have a finite limit when $s \rightarrow \infty$ and that 

\begin{equation}\label{eq:ineq-limit}
lim_{+\infty} \Lambda_1(s) < lim_{+\infty} \Lambda_2(s).
\end{equation}

To see that $\Lambda_i$ has a limit as $s\rightarrow \infty$,  just observe that $\Lambda_i'(s)=s\psi_i''(s)$ which is positive for positive $s$ since $\psi_i$ is solution of $\mathcal{D}$-1 and thus $\psi_i''(s)>0$. As an increasing function, $\Lambda_i$ has therefore a limit as $s \rightarrow\infty$.

Furthermore, from assumptions \textbf{A1} and \textbf{A2}, we know that $ f_{\mu}$ and $H'$ are valued in a bounded interval $[K_1,K_2]$ with $K_1>0$. As a consequence, $\frac{c_i}{f_{\mu}(s)H'(\Lambda_i(s))}$ is bounded by a constant $K$, and therefore:

$$\Lambda_i(s)=\Lambda_i(0)+\int_0^s \Lambda_i'(u)du = -\psi_i(0)+\int_0^s u\psi_i''(u)du \leq -\psi_i(0)+K \int_0^s u \mathcal{N}(u)du$$
where the last integral converges as $s$ goes to $+\infty$.\\

We next prove inequality (\ref{eq:ineq-limit}). First observe that since $\psi_1$ and $\psi_2$ satisfy $\mathcal{D}$-2 we get  $ lim_{s \rightarrow +\infty} \theta'(s)=0$. Therefore: $$| \theta'(s)|=|\int_s^{\infty} \theta''(u)du |= |\int_s^{\infty} \psi_2''(s)-\psi_1''(s) ds| \leq 2 K \int_s^{\infty} \mathcal{N}(u)du $$ and thus $\lim_{s \rightarrow +\infty} s \theta'(s) =0$. We then conclude that: 

$$\lim_{s \rightarrow +\infty} \Lambda_1(s)-\Lambda_2(s) = \lim_{s \rightarrow +\infty}s\theta'(s)-\theta(s)=-\lim_{s \rightarrow +\infty}\theta(s).$$


Next observe that $\lim_{s \rightarrow +\infty}\theta(s)>0$. Indeed: $\theta(s_0)=0$ and $\forall s \in [s_0,+\infty[$, we have $\theta'(s)>0$. Therefore, as announced, $\lim_{s \rightarrow +\infty} \Lambda_1(s) < \lim_{s \rightarrow +\infty} \Lambda_2(s)$. This implies with equation (\ref{eq:R'}) that:

\begin{equation}\label{eq:eq1}
lim_{u \rightarrow 1} R'(u)>0
\end{equation}

Note that, according to the definition of $R$ and the fact that  $\theta'>0$ on $ ]s_0,+\infty[ $, we get:

\begin{equation}\label{eq:eq2}
R(x)> 0  \text{ for } x \in ]F_{\mathcal{N}}(s_0),1[
\end{equation} 
Finally,
\begin{equation}\label{eq:eq3}
lim_{u \rightarrow 1}R(u)=lim_{u \rightarrow 1}\theta'(F_{\mathcal{N}}^{-1}(u))=lim_{u \rightarrow +\infty}\theta'(u)=0
\end{equation}
but relations (\ref{eq:eq2}) and (\ref{eq:eq3}) are in contradiction with (\ref{eq:eq1}). This conclude the proof of the lemma.
\end{proof}

A similar argument leads to a dual result on the left side of $s_0$:

\begin{lemma}\label{lemma:dual_result}
Suppose that $s^-<s_0$. Then $\theta'>0$ on $]s^-,s_0[$.
\end{lemma}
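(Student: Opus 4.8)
The plan is to argue by contradiction, mirroring the proof of Lemma \ref{Not_positive} with the roles of $+\infty$ and $-\infty$ interchanged. Since $\theta'$ keeps a constant sign on $]s^-,s_0[$, it suffices to rule out $\theta'<0$ on that interval, so I would assume $\theta'<0$ there. Then $\theta$ is strictly decreasing on $]s^-,s_0[$, and since $\theta(s_0)=0$, every $s\in\,]s^-,s_0[$ satisfies $\theta(s)>0$, i.e. $\psi_1(s)>\psi_2(s)$. As in Lemma \ref{Not_positive} I would split into two cases according to whether $s^-$ is finite or $s^-=-\infty$.

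In the finite case $s^->-\infty$, by definition of $s^-$ we have $\theta'(s^-)=0$, so $\alpha:=\psi_1'(s^-)=\psi_2'(s^-)$, while the strict decrease of $\theta$ together with continuity forces $\theta(s^-)>0$, i.e. $\psi_1(s^-)>\psi_2(s^-)$. Evaluating the second derivative through $\mathcal{D}$-1,
\[
\theta''(s^-)=\frac{\mathcal{N}(s^-)}{f_\mu(\alpha)}\left(\frac{c_1}{H'(s^-\alpha-\psi_1(s^-))}-\frac{c_2}{H'(s^-\alpha-\psi_2(s^-))}\right),
\]
and using $c_1\ge c_2>0$ together with the strict monotonicity of $H'$ (so that $H'(s^-\alpha-\psi_1(s^-))<H'(s^-\alpha-\psi_2(s^-))$), I would conclude $\theta''(s^-)>0$. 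This contradicts the fact that $\theta'(s^-)=0$ with $\theta'<0$ immediately to the right, which forces $\theta''(s^-)\le 0$.

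The genuinely delicate case, and the main obstacle, is $s^-=-\infty$. Here I would introduce $R(u):=\theta'(F_{\mathcal{N}}^{-1}(u))$ exactly as in Lemma \ref{Not_positive}, but now examine $u\to 0^+$. From $\theta'<0$ on $]-\infty,s_0[$ I get $R(x)<0$ for $x\in\,]0,F_{\mathcal{N}}(s_0)[$, while $\mathcal{D}$-2 gives $\lim_{u\to 0^+}R(u)=\lim_{s\to-\infty}\theta'(s)=0$. The goal is to show $\lim_{u\to 0^+}R'(u)>0$: this makes $R$ increasing near $0^+$, hence strictly positive there since $R(0^+)=0$, contradicting $R<0$. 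I would compute $\lim_{u\to 0^+}R'(u)=\lim_{s\to-\infty}\theta''(s)/\mathcal{N}(s)$ and, using $c_1\ge c_2>0$, bound it below by $\tfrac{c_2}{f_\mu(0)}\bigl(1/H'(\Lambda_1)-1/H'(\Lambda_2)\bigr)$, where $\Lambda_i:=s\psi_i'(s)-\psi_i(s)$. The crux is therefore to establish that each $\Lambda_i$ converges to a finite limit at $-\infty$ (from $\Lambda_i'(s)=s\psi_i''(s)$, the sign of $s$, and the bound $\psi_i''\le K\mathcal{N}$ coming from \textbf{A1}--\textbf{A2}) and that $\lim_{s\to-\infty}\Lambda_1<\lim_{s\to-\infty}\Lambda_2$. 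The latter follows from $\lim_{s\to-\infty}(\Lambda_1-\Lambda_2)=\lim_{s\to-\infty}(s\theta'(s)-\theta(s))=-\lim_{s\to-\infty}\theta(s)<0$, where $\lim_{s\to-\infty}s\theta'(s)=0$ comes from the tail estimate $|\theta'(s)|\le 2KF_{\mathcal{N}}(s)$ together with $sF_{\mathcal{N}}(s)\to 0$, and $\lim_{s\to-\infty}\theta(s)>0$ because $\theta$ is positive and increasing as $s\to-\infty$. By strict monotonicity of $H'$, $\Lambda_1<\Lambda_2$ yields $1/H'(\Lambda_1)>1/H'(\Lambda_2)$, so $\lim_{u\to0^+}R'(u)>0$, giving the required contradiction and completing the proof.
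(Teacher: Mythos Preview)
Your proof is correct and is precisely the dual argument the paper has in mind: the paper itself writes only ``A similar argument leads to a dual result on the left side of $s_0$'' and gives no details, so your mirroring of Lemma \ref{Not_positive} with the roles of $+\infty$ and $-\infty$ interchanged, including the two-case split and the analysis of $R(u)$ near $u\to 0^+$, is exactly what is intended.
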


\begin{lemma}\label{all_equal_zero}
$\theta(s_0)=\theta'(s_0)=\theta''(s_0)=0$.
\end{lemma}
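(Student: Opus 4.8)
The plan is to establish the three equalities in turn, leaning on Lemmas \ref{Not_positive} and \ref{lemma:dual_result} and on the defining equation $\mathcal{D}$-1. The first equality $\theta(s_0)=0$ holds by the very choice of $s_0$, so nothing is needed there; the work is in controlling the first and second derivatives.

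For $\theta'(s_0)=0$ I would argue by sign. Suppose $\theta'(s_0)>0$; by continuity of $\theta'$ there is $\delta>0$ with $\theta'>0$ on $]s_0,s_0+\delta[$, so $\theta'$ has no zero in that interval and hence $s^+=\inf\Gamma^+\geq s_0+\delta>s_0$. Lemma \ref{Not_positive} then forces $\theta'<0$ on $]s_0,s^+[$, contradicting $\theta'>0$ on $]s_0,s_0+\delta[\subset]s_0,s^+[$. Thus $\theta'(s_0)\leq 0$. Symmetrically, assuming $\theta'(s_0)<0$ produces a left-neighborhood on which $\theta'<0$, forcing $s^-<s_0$ and contradicting Lemma \ref{lemma:dual_result}; hence $\theta'(s_0)\geq 0$. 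Combining the two, $\theta'(s_0)=0$.

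For $\theta''(s_0)=0$ I would squeeze $\theta''(s_0)$ between two opposite inequalities. On the one hand, since now $\psi_1(s_0)=\psi_2(s_0)$ and $\psi_1'(s_0)=\psi_2'(s_0)=:\alpha$, the quantities $f_{\mu}(\psi_i'(s_0))=f_{\mu}(\alpha)$ and $H'(s_0\alpha-\psi_i(s_0))$ coincide for $i=1,2$, so evaluating $\mathcal{D}$-1 for each solution and subtracting gives
\[
\theta''(s_0)=\frac{(c_1-c_2)\,\mathcal{N}(s_0)}{f_{\mu}(\alpha)\,H'(s_0\alpha-\psi_1(s_0))}\geq 0,
\]
because $c_1\geq c_2$ and $\mathcal{N}$, $f_{\mu}$, $H'$ are strictly positive. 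On the other hand, if $\theta''(s_0)>0$ then, together with $\theta'(s_0)=0$, the point $s_0$ is a strict local minimum of $\theta$, so $\theta'>0$ on some $]s_0,s_0+\delta[$; exactly as above this yields $s^+>s_0$ and a contradiction with Lemma \ref{Not_positive}. Hence $\theta''(s_0)\leq 0$, and the two inequalities give $\theta''(s_0)=0$.

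The main obstacle is the bookkeeping around $s^+$ and $s^-$: one must be sure that a local positivity of $\theta'$ to the right of $s_0$ genuinely pushes $s^+$ strictly past $s_0$ so that Lemma \ref{Not_positive} becomes applicable (and likewise on the left), rather than having zeros of $\theta'$ accumulate at $s_0$. Once this is handled, the reconciliation of the two sign estimates on $\theta''(s_0)$ — one obtained by differentiating the system, one from the local-extremum structure — closes the argument. The crucial structural input is the ordering $c_1\geq c_2$, which is exactly what makes the $\mathcal{D}$-1 estimate one-signed.
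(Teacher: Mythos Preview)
Your argument is correct and matches the paper's proof almost exactly: the treatment of $\theta'(s_0)=0$ via Lemmas \ref{Not_positive} and \ref{lemma:dual_result} is identical, and for $\theta''(s_0)=0$ the paper simply runs the same local-sign contradiction on both sides (ruling out $\theta''(s_0)>0$ with Lemma \ref{Not_positive} and $\theta''(s_0)<0$ with Lemma \ref{lemma:dual_result}) rather than invoking $\mathcal{D}$-1 and $c_1\geq c_2$ for one of the two inequalities. Your variant is equally valid and in fact foreshadows the next lemma ($c_1=c_2$), which the paper derives from exactly the formula you wrote down.
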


\begin{proof}
Suppose $\theta'(s_0)>0$. There must exist $\delta>0$ such that $\theta'(s)>0$ for s $\in ]s_0,s_0+\delta[$. The definition of $s^+$ implies therefore $s^+ \geq s_0 + \delta > s_0$. Furthermore, $\theta'$ is strictly positive on $]s_0,s^+[$. But this is in contradiction with Lemma \ref{Not_positive}. Similarly, the assumption $\theta'(s_0)<0$ is in contradiction with the dual result Lemma \ref{lemma:dual_result}. And we must therefore have $\theta'(s_0)=0$.

Suppose now that $\theta''(s_0)>0$. Then there exists $\epsilon>0$ such that $\theta'>0$ on $]s_0,s_0+\epsilon[$ in contradiction with Lemma \ref{Not_positive}. With the same arguments, it is impossible that $\theta''(s_0)<0$ and the lemma is proved.
\end{proof}

\begin{lemma}
$c_1=c_2$.
\end{lemma}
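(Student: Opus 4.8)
The plan is to exploit the three vanishing conditions at $s_0$ established in Lemma \ref{all_equal_zero}, namely $\theta(s_0)=\theta'(s_0)=\theta''(s_0)=0$, and simply evaluate the first equation of the differential system $\mathcal{D}$ at the single point $s_0$. Since $\theta=\psi_1-\psi_2$, these three equalities say precisely that
\[
\psi_1(s_0)=\psi_2(s_0),\qquad \psi_1'(s_0)=\psi_2'(s_0),\qquad \psi_1''(s_0)=\psi_2''(s_0).
\]
In other words, the two solutions and their first two derivatives coincide at $s_0$.

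Next I would write out $\mathcal{D}$-1 at $s=s_0$ for each of the two solutions:
\[
f_{\mu}(\psi_i'(s_0))\,\psi_i''(s_0)\,H'\bigl(s_0\psi_i'(s_0)-\psi_i(s_0)\bigr)=c_i\,\mathcal{N}(s_0),\qquad i=1,2.
\]
By the coincidence of $\psi_i(s_0)$, $\psi_i'(s_0)$ and $\psi_i''(s_0)$ noted above, the entire left-hand side is literally the same expression for $i=1$ and $i=2$. Hence the two right-hand sides must be equal, i.e. $c_1\,\mathcal{N}(s_0)=c_2\,\mathcal{N}(s_0)$.

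Finally, since $\mathcal{N}(s_0)=\frac{e^{-s_0^2/2}}{\sqrt{2\pi}}>0$, one may divide through by $\mathcal{N}(s_0)$ to obtain $c_1=c_2$, which is the claim. There is in fact no real obstacle remaining at this stage: all the analytic difficulty was discharged in Lemmas \ref{Not_positive}, \ref{lemma:dual_result} and \ref{all_equal_zero}, which forced the matching of $\theta$ and its first two derivatives at $s_0$; the present step is the short algebraic consequence of plugging that matching into the nonlinear relation $\mathcal{D}$-1. (As a consistency check, because $c_2>0$ and $f_{\mu},H',\mathcal{N}$ are all strictly positive, the common value $\psi_1''(s_0)=\psi_2''(s_0)$ is automatically strictly positive, so the evaluation is non-degenerate.) Combined with the earlier reduction this yields $\psi_1=\psi_2$ and $c_1=c_2$, completing the proof of uniqueness for the system $\mathcal{D}$.
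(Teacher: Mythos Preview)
Your proof is correct and follows essentially the same approach as the paper: evaluate $\mathcal{D}$-1 at the point $s_0$ for each solution, use Lemma \ref{all_equal_zero} to see that the non-$c_i$ side is identical for $i=1,2$, and cancel the strictly positive factor $\mathcal{N}(s_0)$. The only cosmetic difference is that the paper first solves for $c_i$ as a ratio and then observes the right-hand side is independent of $i$, whereas you compare the two equations directly; the content is the same. One small remark: your final sentence (``this yields $\psi_1=\psi_2$ \ldots completing the proof of uniqueness'') overshoots the scope of the present lemma, which only asserts $c_1=c_2$; the identification $\psi_1=\psi_2$ is established separately afterwards via Cauchy--Lipschitz.
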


\begin{proof}
Indeed, equation $\mathcal{D}$-1 gives, for $i=1,2$:

\[c_i= \frac{f_{\nu}(\psi'_i(s_0))\psi''_i(s_0)H'(s_0\psi'_i(s_0)-\psi_i(s_0))}{\mathcal{N}(s_0)}
\]

But, according to Lemma \ref{all_equal_zero} the right hand side does not depend on $i$.
\end{proof}

\begin{proof}[\underline{Proof of Theorem \ref{Thm_uniqueness}}]~~\\


Let $c$ denote the common value $c:=c_1=c_2$. Our two functions $\psi_1$ and $\psi_2$ are now solutions to the same differential equation:

\[\psi''(s)=F(s,\psi(s),\psi'(s))\]

where 
\[F(s,x,y):=\frac{c\mathcal{N}(s)}{  H'(sy-x)       f_{\mu}(y) }\]

Due to our assumptions \textbf{A1}, \textbf{A2} on $f_{\mu}$ and $H$, $F$ is $C^1$ with respect to $(s,x,y)$. Therefore, according to Cauchy-Lipschitz theorem, $\psi_1$ and $\psi_2$ must coincide since they are both solution of the same differential equation and have the same initial conditions $\psi(s_0),\psi'(s_0)$ at $s=s_0$.
\end{proof}

\bibliographystyle{plainnat}
\bibliography{bibriskaversion}
\end{document}